\theoremstyle{plain}
\newtheorem{theorem}{Theorem}[section]
\newtheorem{lemma}[theorem]{Lemma}
\newtheorem{proposition}[theorem]{Proposition}
\newtheorem{corollary}[theorem]{Corollary}
\newtheorem{algorithm}[theorem]{Algorithm}
\theoremstyle{definition}
\newtheorem{definition}[theorem]{Definition}
\newtheorem{def-prop}[theorem]{Definition-Proposition}
\theoremstyle{remark}
\newtheorem{remark}{Remark}[section]
\begin{document}
\title[Algorithms for quivers]{Some algorithms for semi-invariants of quivers.}
\address{117437, Ostrovitianova, 9-4-187, Moscow, Russia.}
\email{mitia@mccme.ru}
\author{D.A.Shmelkin}
\begin{abstract}
We present some theorems and algorithms for calculating
perpendicular categories and locally semi-simple decompositions.
We implemented a computer program {\sc TETIVA} based
on these algorithms and we offer this program for everybody's use.
\end{abstract}

\subjclass[2000]{14L30, 16G20}

\maketitle
\section{Introduction}

Let $Q$ be a finite quiver with the set $Q_0$ of vertices
and $Q_1$ of arrows; for an arrow $\varphi\in Q_1$ denote
by $t\varphi$ and $h\varphi$ its tail and its head, respectively.
For $\alpha\in {\bf Z}_+^{Q_0}$ denote
by $R(Q,\alpha)$ the set of the representations of $Q$ of dimension
$\alpha$ over an  algebraically closed field ${\bf k}$ of characteristic 0, i.e.,
$R(Q,\alpha) = \bigoplus_{\varphi\in Q_1} {\rm Hom}
({\bf k}^{\alpha_{t\varphi}},{\bf k}^{\alpha_{h\varphi}})$.
For $V\in R(Q,\alpha), a\in Q_0,\varphi\in Q_1$ denote by $V(a)$ the vector
space ${\bf k}^{\alpha_a}$ sitting at $a$ and by $V(\varphi):V(t\varphi)\to V(h\varphi)$
the component of $V$. For representations $U,V$ of $Q$ a homomorphism $f:U\to V$ is a collection 
$f=(f_a\vert a\in Q_0, f_a\in {\rm Hom}((U(a),V(a)))$ such that for each $\varphi\in Q_1$
holds $V(\varphi) f(t\varphi) = f(h\varphi)U(\varphi)$. By ${\rm Hom}(U,V)$ denote the vector space
of all homomorphisms.

Clearly, $R(Q,\alpha)$ is a ${\bf k}$-vector space and there is a natural
linear representation $\rho$ of the group $GL(\alpha) = \prod_{i\in Q_0} GL(\alpha_i)$ 
in $R(Q,\alpha)$: 
\begin{equation}
(\rho(g)(V))(\varphi) = g(h\varphi) V(\varphi) (g(t\varphi))^{-1},
\end{equation}
\noindent such that the orbits of $GL(\alpha)$ are the isomorphism classes of representations.

Recall that $\beta\in {\bf Z}_+^{Q_0}$ is called a {\it root} if $R(Q,\beta)$ contains
an indecomposable representation and a {\it Schur} root if generic element $U\in R(Q,\beta)$
is indecomposable, and in this case ${\rm Aut}(U)={\bf k}^*$ (\cite{kac}).
By the Krull-Schmidt theorem for any representation $V\in R(Q,\alpha)$ there is a 
decomposition $V=R_1\oplus R_2\oplus\cdots\oplus R_t$ into a sum of indecomposable summands and
it is unique up to permutations and isomorphisms of summands.
In particular, $V$ yields a decomposition $\alpha = \dim R_1 +\cdots + \dim R_t$ into the sum
of roots and this decomposition does not change over the isomorphism class of $V$.
This decomposition is an important invariant of $V$ and in some cases it allows to recover
the group ${\rm Aut}(V)$ or even the isomorphism class of $V$. The main subject of this
paper is to study several special classes of representations and to calculate the corresponding
decompositions.

It is shown in \cite{kac} that among all the decompositions of $\alpha$ into the sum of roots 
there is a generic element such that there is an open dense subset in $R(Q,\alpha)$ consisting 
of representations with such a decomposition; V.Kac called this {\it canonical} decomposition.
We however follow another tradition and prefer the name {\it generic} for the same object.
By \cite{kac} each root in the generic decomposition is a Schur root, and moreover
a decomposition $\alpha = \beta_1 +\cdots + \beta_t$ is generic if and only
if for generic $B_i\in R(Q,\beta_i)$ holds: ${\rm Aut}(B_i)={\bf k^*}$ and
${\rm Ext}(B_i,B_j)=0$ for $i,j=1,\cdots,t, i\neq j$. Recall that C.M.Ringel
introduced in \cite{ri} the {\it Euler} bilinear form 
\begin{equation}
\langle \alpha,\beta\rangle=\sum_{a\in Q_0} \alpha_a\beta_a-\sum_{\varphi\in Q_1} \alpha_{t\varphi}\beta_{h\varphi},
\end{equation}
\noindent such that the {\it Tits} form can be written as 
$q_Q(\alpha)=\langle \alpha,\alpha\rangle$, and proved the formula:
\begin{equation}\label{ringel}
\dim {\rm Ext}(U,V) = \dim {\rm Hom}(U,V) - \langle \dim U,\dim V\rangle.
\end{equation}

\noindent Applying the above formula to the generic decomposition $\alpha = \beta_1 +\cdots + \beta_t$,
we have $\langle \beta_i,\beta_j\rangle = \dim {\rm Hom}(B_i,B_j)\geq 0$, and in particular,
$\beta_i=\beta_j=\beta$ implies $q_Q(\beta)\geq 0$, so imaginary roots $\beta$ with $q_Q(\beta)< 0$
may occur in the generic decomposition only with multiplicity 1.
V.Kac found the above properties of the generic decomposition and addressed the problem
to find an algorithm for calculation of the generic decomposition in terms of the Euler form.
This is done in \cite{dw2}.

Denote by $SL(\alpha)$ the commutator subgroup of $GL(\alpha)$, 
$SL(\alpha)=\prod_{i\in Q_0} SL(\alpha_i)\subseteq GL(\alpha)$.
A natural task of the Invariant Theory in the quiver setup is to study
the $GL(\alpha)$-semi-invariant functions on $R(Q,\alpha)$, which are
also $SL(\alpha)$-invariant. To be precise, the character group of $GL(\alpha)$ 
is isomorphic to ${\bf Z}^{Q_0}$
such that $\chi\in {\bf Z}^{Q_0}$ gives rise to the character
$\overline{\chi}=\prod_{a\in Q_0,\alpha_a>0}\det_a^{\chi_a}$ and we have:
\begin{equation}
{\bf k}[R(Q,\alpha)]^{SL(\alpha)} = 
\oplus_{\chi\in {\bf Z}^{Q_0}} {\bf k}[R(Q,\alpha)]^{(GL(\alpha))}_{\chi},
\end{equation}
where ${\bf k}[R(Q,\alpha)]^{(GL(\alpha))}_{\chi} = 
\{f\in {\bf k}[R(Q,\alpha)]\vert 
gf = \overline{\chi} (g)f, \forall g\in GL(\alpha)\}$.
Note also that ${\bf k}[R(Q,\alpha)]^{(GL(\alpha))}_{\chi}\neq\{0\}$
implies $\chi(\alpha)\doteq\sum_{a\in Q_0}\chi_a\alpha_a = 0$.

A.Schofield introduced in \cite{sch91} a correspondance between representations
and semi-invariants. Namely, for any representation $W$ there is a determinantal semi-invariant
$c_W$ such that $c_W(V)\neq 0$ if and only if $\langle\dim V,\dim W\rangle = 0$ and
${\rm Hom}(V,W)$ $=0$, hence also ${\rm Ext}(V,W)=0$ by Ringel formula (\ref{ringel}). 
Moreover, the weight of $c_W$ is equal $-\langle\cdot,\dim W\rangle$.
Besides, the representations $V$ such that $c_W(V)\neq 0$ constitute an abelian subcategory
closed under homomorphisms, extensions, direct sum and summands and this subcategory
is denoted by $^{\perp}W$. Similarly, the subcategory $W^{\perp}$ consists of those $V$
such that ${\rm Hom}(W,V)$ $=0$ and ${\rm Ext}(W,V)=0$.

Assume that $Q$ has no oriented cycles. Then for any $\alpha$ and $\chi$ 
the vector space ${\bf k}[R(Q,\alpha)]^{(GL(\alpha))}_{\chi}$ is finite dimensional
and it is proved in \cite{dw0} that this vector space is generated
by semi-invariants $c_W$, where $W$ is a representation such that $-\langle\cdot,\dim W\rangle=\chi$.

In \cite{sh} we introduced a class of representations, which help to study the semi-invariants
of quiver from the geometrical point of view:

\begin{theorem}(\cite{sh})\label{char-lss} Let $V=m_1S_1+\cdots+m_tS_t\in R(Q,\alpha)$ be a decomposition
into indecomposable summands. The following properties of $V$ are equivalent:

(i) the $SL(\alpha)$-orbit of $V$ is closed in $R(Q,\alpha)$

(ii) the $GL(\alpha)$-orbit of $V$ is closed in $R(Q,\alpha)_{f}$, 
$f\in {\bf k}[R(Q,\alpha)]^{(GL(\alpha))}_{\chi}$

(iii) $S_1,\cdots,S_t$ are simple objects in $^{\perp}W$ 
for a representation $W$. 
\end{theorem}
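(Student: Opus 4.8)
The plan is to show that each of (i), (ii) and (iii) is equivalent to the numerical condition that \emph{$V$ is $\theta$-polystable for some weight $\theta\in{\bf Z}^{Q_0}$}, i.e.\ $V=m_1S_1+\cdots+m_tS_t$ with each $S_i$ a $\theta$-stable representation. Here I use King's criterion in the normalization under which ${\bf k}[R(Q,\alpha)]^{(GL(\alpha))}_{\theta}$ contains a function not vanishing at a representation $U$ precisely when $U$ is \emph{$\theta$-semistable}, i.e.\ $\theta(\dim U)=0$ and $\theta(\dim U')\le 0$ for every subrepresentation $U'\subseteq U$; $U$ is \emph{$\theta$-stable} if moreover the inequality is strict for $U'$ proper and nonzero, and \emph{$\theta$-polystable} if it is a direct sum of $\theta$-stable subrepresentations. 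A $\theta$-stable representation is indecomposable (a proper decomposition $U=A\oplus B$ forces $\theta(\dim A)+\theta(\dim B)<0=\theta(\dim U)$), so by the Krull-Schmidt theorem $V$ is $\theta$-polystable if and only if all of $S_1,\dots,S_t$ are $\theta$-stable. I use freely Ringel's formula (\ref{ringel}), the properties of ${}^{\perp}W$ recalled above from \cite{sch91}, and the theorem of \cite{dw0} that ${\bf k}[R(Q,\alpha)]^{(GL(\alpha))}_{\theta}$ is spanned by the $c_W$ with $-\langle\cdot,\dim W\rangle=\theta$.

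For (i) $\Leftrightarrow$ (ii) I would invoke the standard geometric invariant theory in the quiver setting (Hilbert-Mumford, Kempf-Ness, King): a closed $SL(\alpha)$-orbit in $R(Q,\alpha)$ is precisely a $\theta$-polystable point, for a suitable weight $\theta$. Granting this, it is enough to bracket (ii) between the two conditions. If $V$ is $\theta$-polystable then it is $\theta$-semistable, so some $f\in{\bf k}[R(Q,\alpha)]^{(GL(\alpha))}_{n\theta}$ with $n\ge 1$ has $f(V)\neq 0$; by King's description of the closed orbits in the $n\theta$-semistable locus, $GL(\alpha)\cdot V$ is already closed there, hence closed in the smaller open subset $R(Q,\alpha)_f$, which is condition (ii). Conversely, suppose $GL(\alpha)\cdot V$ is closed in $R(Q,\alpha)_f$ for some $f\in{\bf k}[R(Q,\alpha)]^{(GL(\alpha))}_{\chi}$ with $f(V)\neq 0$. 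Since $SL(\alpha)$ is the commutator subgroup of $GL(\alpha)$, the semi-invariant $f$ is an $SL(\alpha)$-invariant function and hence constant on $\overline{SL(\alpha)\cdot V}$; therefore any point of $\overline{SL(\alpha)\cdot V}\setminus SL(\alpha)\cdot V$ would lie in $\overline{GL(\alpha)\cdot V}\cap R(Q,\alpha)_f=GL(\alpha)\cdot V$ and thus, by normality of $SL(\alpha)$, would have an $SL(\alpha)$-orbit of the same dimension as $SL(\alpha)\cdot V$ --- which is absurd for a boundary point. So $SL(\alpha)\cdot V$ is closed, and we conclude that (i) and (ii) are each equivalent to ``$V$ is $\theta$-polystable for some $\theta$''.

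It remains to identify (iii) with this numerical condition. Assume (iii) and put $\theta=-\langle\cdot,\dim W\rangle$, the weight of $c_W$. As ${}^{\perp}W$ is closed under direct sums, $V=m_1S_1+\cdots+m_tS_t\in{}^{\perp}W$, so $c_W(V)\neq 0$, whence $\theta(\alpha)=0$ and $V$ is $\theta$-semistable. The crucial observation is that for every subrepresentation $U'\subseteq V$ one has $\langle\dim U',\dim W\rangle\ge 0$, with equality if and only if $U'\in{}^{\perp}W$: applying ${\rm Hom}(-,W)$ to $0\to U'\to V\to V/U'\to 0$ and using ${\rm Hom}(V,W)={\rm Ext}(V,W)=0$ together with ${\rm Ext}^2=0$ (the path algebra of $Q$ is hereditary, since $Q$ has no oriented cycles) gives ${\rm Ext}(U',W)=0$ and ${\rm Hom}(U',W)\cong{\rm Ext}(V/U',W)$, so $\langle\dim U',\dim W\rangle=\dim{\rm Hom}(U',W)\ge 0$, which vanishes exactly when $U'\in{}^{\perp}W$. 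Now if some $S_i$ had a proper nonzero subrepresentation $S'$ with $\theta(\dim S')=0$, then $S'\subseteq V$ would lie in ${}^{\perp}W$, i.e.\ $S'$ would be a proper nonzero subobject of the simple object $S_i$ of ${}^{\perp}W$ --- a contradiction. Hence each $S_i$ is $\theta$-stable, and $V$ is $\theta$-polystable. Conversely, if $V$ is $\theta$-polystable for some $\theta$, then after replacing $\theta$ by a positive multiple (which changes neither the polystability of $V$ nor conditions (i) and (ii)) the space ${\bf k}[R(Q,\alpha)]^{(GL(\alpha))}_{\theta}$ does not vanish at $V$, so by \cite{dw0} there is a representation $W$ with $-\langle\cdot,\dim W\rangle=\theta$ and $c_W(V)\neq 0$, i.e.\ $V\in{}^{\perp}W$. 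Then each $S_i\in{}^{\perp}W$ (as ${}^{\perp}W$ is closed under summands); the $S_i$ are exactly the $\theta$-stable summands of $V$ by the Krull-Schmidt remark; and a proper nonzero subobject $S'\subsetneq S_i$ of $S_i$ in ${}^{\perp}W$ would satisfy $\langle\dim S',\dim W\rangle=0$, i.e.\ $\theta(\dim S')=0$, contradicting the $\theta$-stability of $S_i$. So each $S_i$ is simple in ${}^{\perp}W$, which is (iii).

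The step I expect to be the real obstacle is the geometric one behind (i) $\Leftrightarrow$ (ii): establishing cleanly, over ${\bf k}$, that the closed $SL(\alpha)$-orbits in $R(Q,\alpha)$ are exactly the $\theta$-polystable points as $\theta$ varies, together with the compatibility of the distinguished affine opens $R(Q,\alpha)_f$ with the semistable loci --- that is, assembling the full Hilbert-Mumford/Kempf-Ness and King package carefully, including the degenerate point $V=0$. Once that dictionary is in place, the remaining work --- the long exact sequences, the use of Ringel's formula, and the description of subobjects of $V$ in ${}^{\perp}W$ --- is routine. A secondary point to watch is that the weight witnessing polystability need only satisfy $\theta(\alpha)=0$, whereas (iii) requires a weight of the form $-\langle\cdot,\dim W\rangle$ for an honest representation $W$; this gap is bridged precisely by the generation theorem of \cite{dw0}, applied once we know some semi-invariant of that weight is nonzero at $V$.
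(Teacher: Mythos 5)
The paper does not prove Theorem~\ref{char-lss} itself --- it is cited verbatim from \cite{sh} --- so there is no in-paper proof to compare against. Your reconstruction via King's $\theta$-(poly)stability is nevertheless the natural and essentially forced route, and the core lemma you identify is correct and well chosen: for a subrepresentation $U'\subseteq V$ with $V\in{}^{\perp}W$, the long exact sequence gives $\langle\dim U',\dim W\rangle=\dim{\rm Hom}(U',W)\ge 0$, vanishing exactly when $U'\in{}^{\perp}W$, and this cleanly translates simplicity of the $S_i$ in ${}^{\perp}W$ into $\theta$-stability for $\theta=-\langle\cdot,\dim W\rangle$, while \cite{dw0} converts a witnessing weight back into an honest $W$. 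Your (ii)$\Rightarrow$(i) argument via normality of $SL(\alpha)$ and orbit dimensions is also correct.

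One step deserves to be flagged as more than a technicality. The only arrow you draw leaving condition (i) is the assertion that a closed $SL(\alpha)$-orbit is $\theta$-polystable for some weight $\theta$, and you defer this entirely to ``standard GIT.'' King's theorem, however, only characterizes closed $GL(\alpha)$-orbits \emph{inside a prescribed $\theta$-semistable locus}; it does not by itself produce a $\theta$ from a closed $SL(\alpha)$-orbit in all of $R(Q,\alpha)$. Matsushima's criterion (reductivity of $SL(\alpha)_V$) gets you only that the $S_i$ are Schurian with pairwise trivial ${\rm Hom}$, which is strictly weaker than polystability. The missing ingredient is a Kempf--Ness or algebraic Kempf-type argument showing that the closed $SL(\alpha)$-orbit singles out a character of the central torus $GL(\alpha)/SL(\alpha)$; this is precisely what \cite[Theorem~11]{sh} supplies. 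As written, your proof establishes the cycle (iii) $\Rightarrow$ polystable $\Rightarrow$ (ii) $\Rightarrow$ (i) and the reverse (iii) $\Leftarrow$ polystable, but no implication actually exits (i) until this GIT result is proved or properly cited.
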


We called the representations meeting the equivalent properties of the above Theorem 
{\it locally semi-simple}. In particular, these representations meet the formula
\begin{equation}\label{cronk}
\dim {\rm Hom}(S_i,S_j) = \delta_{ij}.
\end{equation}

\noindent This property yields an equality ${\rm Aut}(V)\cong GL(m_1)\times\cdots\times GL(m_t)$
so the decomposition corresponding to $V$ completely defines the embedding ${\rm Aut}(V)\subseteq GL(\alpha)$.
Recall that D.Luna introduced in \cite{lu} a stratification of the quotient $L/\!\!/G$
of a finite dimensional module $L$ over a reductive group $G$,
$L/\!\!/G= \sqcup (L/\!\!/G)_{(M)}$, where $(L/\!\!/G)_{(G)}$ is the subset of those
$\xi$ such that the unique closed orbit over $\xi$ is $G$-isomorphic to $G/M$. 
We introduced in \cite{sh} a sort of specification of the Luna statification of $L/\!\!/(G,G)$ 
by the strata $(L/\!\!/(G,G))^G_{(M)}$ such that the $G$-orbit of the points
on the unique closed orbit in the fibers are $G$-isomorphic to $G/M$. Each
usual Luna stratum of $L/\!\!/(G,G)$ is therefore decomposed into finitely many
locally closed substrata. By Theorem \ref{char-lss}, in the case of 
$G=GL(\alpha),L=R(Q,\alpha)$ this $GL(\alpha)$-stratification of $R(Q,\alpha)/\!\!/SL(\alpha)$ 
is equivalent to the
description of all locally semi-simple decompositions of $\alpha$.
Of particular interest are the open stratum and the corresponding decomposition
that we called {\it generic locally semi-simple}.

The results of the paper are as follows. Firstly, using
the results of \cite{dw6} we get in Theorem \ref{th1} a sufficient condition
for a decomposition to be locally semi-simple. 
Next, we consider an important particular case of a {\it prehomogeneous}
dimension vector $\beta$, i.e., such that
$R(Q,\beta)$ contains a dense orbit $GL(\beta)W$.
Actually, we revisit an important theorem in \cite{sch91} saying that the category $W^{\perp}$ 
is isomorphic to the category of representations
of a quiver $\Sigma$ without oriented cycles. We analyze the proof of that theorem
and find that it yields Algorithm \ref{alg_perp} for calculating the dimensions
of the simple objects in $W^{\perp}$. This Algorithm can be viewed,
firstly, as a tool for calculating the algebraically independent
generators of ${\bf k}[R(Q,\beta)]^{SL(\beta)}$ (see Theorem \ref{generator}).
Secondly, with the help of the Algorithm we get in Theorem \ref{prehstart}
a complete description of the Luna $GL(\beta)$-stratification,
and in particular, the generic locally semi-simple decomposition of $\beta$ in Corollary \ref{preh_lss}. 
Finally, we provide an Algorithm \ref{alg_lss} for calculating  the generic
locally semi-simple decomposition for arbitrary dimension vector $\alpha$. 
This Algorithm is based on one hand, on the idea
of that for the generic decomposition from \cite{dw0}, and on the other hand,
on Corollary \ref{preh_lss}.

Besides proving theorems and algorithms we implemented a computer program
for doing all these types of calculations, namely, allowing to calculate
generic and generic locally semi-simple decompositions for arbitrary dimension
vectors and perpendicular categories for a prehomogeneous vector. This program
is called {\sc TETIVA} and is available at \cite{te}

\section{Schur sequences and locally semi-simple representations.}

In this section we relate locally semi-simple decompositions of dimension vector
to various other decompositions and start with the $\sigma$-stable ones. 

Let $\alpha$ be a dimension vector and $\sigma\in {\bf Z}^{Q_0}$
be a weight such that $\sigma(\alpha)=0$. Recall that King
introduced in \cite{ki} the notion of (semi-)stability
of representations of dimension $\alpha$. Assume 
that generic representation of dimension $\alpha$ is $\sigma$-semi-stable,
or, equivalently, ${\bf k}[R(Q,\alpha)]_{\sigma}^{(GL(\alpha))}\neq \{0\}$.
Then each $\sigma$-semi-stable $V\in R(Q,\alpha)$ has a filtration 
in the subcategory of $\sigma$-semi-stable
representations with the $\sigma$-stable factors, that is,
Jordan-H\"older factors. So $V$ yields a decomposition of $\alpha$ 
into the linear combination of the dimensions of $\sigma$-stable representations,
and for $V$ generic we get the so-called {\it $\sigma$-stable}
decomposition of $\alpha$:
\begin{equation}
\alpha = m_1\alpha_1+\cdots+m_s\alpha_s.
\end{equation}
\noindent
Note that for $Q$ being a tame quiver and $\sigma$ the {\it defect}
the $\sigma$-stable representations are the regular ones
and the $\sigma$-stable decomposition is Ringel's {\it canonical} one,
see \cite{ri}. By \cite[Proposition ~10,~Theorem~11]{sh}, we get:

\begin{proposition}\label{sigma_SS}
The $\sigma$-stable decomposition is locally semi-simple.
\end{proposition}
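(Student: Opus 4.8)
The plan is to exhibit a representation realizing the $\sigma$-stable decomposition and to verify condition (ii) of Theorem \ref{char-lss} for it, using King's description \cite{ki} of semistability. First I would unwind what the decomposition encodes: for generic $\sigma$-semistable $V\in R(Q,\alpha)$ the Jordan--H\"older factors of $V$ in the abelian subcategory of $\sigma$-semistable representations are $\sigma$-stable representations $S_1,\dots,S_s$ of dimensions $\alpha_1,\dots,\alpha_s$, each of $\sigma$-slope $0$ since $\sigma(\alpha)=0$, occurring with multiplicities $m_1,\dots,m_s$; moreover a generic representation of dimension $\alpha_i$ is itself $\sigma$-stable. Choosing the $\alpha_i$ distinct and the $S_i$ generic, put $V'=m_1S_1\oplus\cdots\oplus m_sS_s$. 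By construction $V'$ is the associated graded of a Jordan--H\"older filtration of $V$, so its Krull--Schmidt decomposition is exactly $\alpha=m_1\alpha_1+\cdots+m_s\alpha_s$; each $S_i$ is a Schur representation, and being pairwise non-isomorphic $\sigma$-stable objects of the same slope the $S_i$ satisfy $\dim{\rm Hom}(S_i,S_j)=\delta_{ij}$, i.e.\ \eqref{cronk}. It therefore remains only to show that the $SL(\alpha)$-orbit of $V'$ is closed in $R(Q,\alpha)$.

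For this I would invoke King's theory: with respect to the linearization of the $SL(\alpha)$-action given by $\overline{\sigma}$, the $\sigma$-semistable locus is the union of the principal open sets $R(Q,\alpha)_f$ over $f\in{\bf k}[R(Q,\alpha)]^{(GL(\alpha))}_{n\sigma}$, $n\geq 1$, and a $\sigma$-polystable representation---a direct sum of $\sigma$-stables of the same slope---has closed $GL(\alpha)$-orbit inside this semistable locus. Since $V'$ is $\sigma$-polystable it is in particular $\sigma$-semistable, so $f(V')\neq 0$ for some such $f$; then $R(Q,\alpha)_f$ is a $GL(\alpha)$-stable affine open subset of the semistable locus containing $V'$, and an orbit closed in the whole semistable locus is a fortiori closed in $R(Q,\alpha)_f$. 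This is precisely condition (ii) of Theorem \ref{char-lss}, whence $V'$ is locally semi-simple.

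The step requiring genuine care---and the reason the argument is not purely formal---is matching King's notion of $\sigma$-polystability with the closed-orbit condition of Theorem \ref{char-lss}: one must check that closedness of $GL(\alpha)V'$ in $R(Q,\alpha)_f$ is insensitive to the choice of weight $n\sigma$, and that $\sigma$-polystability yields a closed orbit in the GIT sense rather than merely modulo the center. Alternatively one may argue entirely inside quiver representation theory: by \cite{sch91} and \cite{dw0} each $\sigma$-stable $S_i$ is a simple object of $^{\perp}W_i$ for a suitable $W_i$ with $-\langle\,\cdot\,,\dim W_i\rangle$ a positive multiple of $\sigma$, and one must assemble from the $W_i$ a single $W$ for which $S_1,\dots,S_s$ are simultaneously simple in $^{\perp}W$, giving condition (iii) directly; this is essentially the route of \cite[Proposition ~10,~Theorem~11]{sh}. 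Either way, the substance is that ``$\sigma$-polystable'' and ``simple in some $^{\perp}W$'' cut out the same locally semi-simple representations.
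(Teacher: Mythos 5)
Your argument is correct, and it takes a genuinely different route from the paper, which disposes of the proposition by a one-line citation to \cite[Proposition~10, Theorem~11]{sh}. What the paper leans on in \cite{sh} is, as you already observe at the end, the criterion~(iii) of Theorem~\ref{char-lss}: one produces a single representation $W$ for which the $\sigma$-stable Jordan--H\"older factors become simple objects of $^{\perp}W$. You instead argue directly through criterion~(ii): form the associated graded $V'=\bigoplus_i m_iS_i$ of a generic $\sigma$-semistable representation, note it is $\sigma$-polystable, and use King's GIT description --- the $\sigma$-semistable locus is the union of the $GL(\alpha)$-stable affine charts $R(Q,\alpha)_f$ over semi-invariants $f$ of weight a positive multiple of $\sigma$, and polystable orbits are the closed ones there --- to conclude that $GL(\alpha)V'$ is closed in some $R(Q,\alpha)_f$. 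This is clean and self-contained given King's results, and it buys you a transparent dictionary ``$\sigma$-polystable $\Leftrightarrow$ locally semi-simple'' rather than having to fabricate a suitable $W$; the cost is exactly the care you flag yourself, namely that criterion~(ii) is stated loosely in the paper (it should be read existentially in $f$, with $R_f$ required $GL(\alpha)$-stable, which holds since $f$ is a semi-invariant). Two minor remarks: the fact that $\dim{\rm Hom}(S_i,S_j)=\delta_{ij}$ needs only that the $S_i$ are pairwise non-isomorphic stables of the same slope, not genericity; and when collecting Jordan--H\"older factors into the decomposition $\alpha=\sum m_i\alpha_i$ one should be aware that for isotropic $\alpha_i$ the factors of that dimension may be pairwise non-isomorphic, so the multiplicity conventions for $\sigma$-stable decompositions and for locally semi-simple ones (cf.\ the discussion before Corollary~\ref{cor1}) do not automatically coincide --- but your $V'$ with isomorphic copies is itself polystable, so it still witnesses that the dimension decomposition is locally semi-simple, which is all the proposition asserts.
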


\begin{definition}
For dimension vectors $\alpha,\beta$ denote by
$hom(\alpha,\beta)$ and $ext(\alpha,\beta)$
the dimensions of ${\rm Hom}(A,B)$ and ${\rm Ext}(A,B)$
for generic $A\in R(Q,\alpha),B\in R(Q,\beta)$, respectively.
Further, write $\alpha\perp \beta$ if $hom(\alpha,\beta)=0=ext(\alpha,\beta)$.
\end{definition}

\begin{definition}
A sequence $\alpha_1,\cdots,\alpha_s$ of dimension vectors is called {\it (right) perpendicular}
if it consists of Schur roots and for $1\leq i < j\leq s$ holds $\alpha_i\perp \alpha_j$.
\end{definition} 

Now we introduce an important notion from \cite{dw6}:

\begin{definition}
A perpendicular sequence $\alpha_1,\cdots,\alpha_s$ is called a {\it Schur} sequence if
\begin{equation}\label{pperp}
\alpha_i \circ \alpha_j \doteq \dim {\bf k}[R(Q,\alpha_i)]^{(GL(\alpha_i))}_{-\langle \cdot,\alpha_j\rangle} = 1,
1\leq i < j \leq s.
\end{equation}
\noindent This sequence is called
{\it quiver Schur} if additionally $\langle \alpha_j,\alpha_i\rangle\leq 0$
for $i < j$.
\end{definition}

The notion of quiver Schur sequence can be interpreted in terms of local quiver.
The idea of the latter goes back to \cite{lbp}, where it was applied for semi-simple
representations and we used it in \cite{sh} for locally semi-simple ones, too.
Moreover, the definition works for any representation $V=m_1S_1+\cdots+m_tS_t$ that meets 
condition (\ref{cronk}) and we define $\Sigma_V$ to be the quiver with $t$ vertices corresponding to the summands 
$S_1,\cdots,S_t$ and $\delta_{ij}-\langle\dim S_i,\dim S_j\rangle$ arrows from
$i$ to $j$. Note that, thanks to the condition (\ref{cronk})
and the Ringel formula (\ref{ringel}), $\delta_{ij}-\langle\dim S_i,\dim S_j\rangle =\dim {\rm Ext}(S_i,S_j)\geq 0$.
On the other hand, the definition of $\Sigma_V$ only depends on the sequence
$\underline{\alpha} = (\dim S_1,\cdots,\dim S_t)$, not on the multiplicities,
and even not on the summands themselves, provided the homomorphism
spaces are trivial. So it is possible and even more correct to denote
the quiver $\Sigma_{\underline{\alpha}}$.
This local quiver plays a crucial role in \cite{lbp} and \cite{sh} because 
for locally semi-simple $V$ the {\it slice representation} at $V$ is (by formula (9) in \cite{sh}):
\begin{equation}\label{slice}
({\rm Aut}(V),{\rm Ext}(V,V))\cong (GL(\gamma),R(\Sigma_{\underline{\alpha}},\gamma)),
\end{equation}
where $\gamma=(m_1,\cdots,m_t)$ is a dimension vector for $\Sigma_{\underline{\alpha}}$.
Threfore Luna's \'etale slice Theorem relates the local equivariant structure of a 
neighborhood of $V$ in $R(Q,\alpha)$ with that of $0$ in $R(\Sigma_{\underline{\alpha}},\gamma)$.

\begin{proposition}\label{prop2}
Let $\underline{\alpha}=(\alpha_1,\cdots,\alpha_t)$ be a sequence of Schur roots.

{\bf 1.} If $\underline{\alpha}$ is a quiver Schur sequence
then ${\rm hom}(\alpha_i,\alpha_j)=0$ for any $i\neq j$
and the quiver $\Sigma_{\underline{\alpha}}$ has no oriented cycles except loops.

{\bf 2.} If ${\rm hom}(\alpha_i,\alpha_j)=0$ for any $i\neq j$
and $\Sigma_{\underline{\alpha}}$ has no oriented cycles ex\-cept loops, then
after a reordering we have $\alpha_i\perp \alpha_j$ for $i < j$.
If moreover $\alpha_i$ is imaginary for at most one $i$,
then after a reordering $\underline{\alpha}$ becomes a quiver Schur sequence.
\end{proposition}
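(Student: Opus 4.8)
The plan is to extract everything from the slice identification (\ref{slice}) and the known properties of the local quiver $\Sigma_{\underline{\alpha}}$, translating statements about Schur/perpendicular sequences into statements about arrows of $\Sigma_{\underline{\alpha}}$ via the Ringel formula (\ref{ringel}). Throughout I write $n_{ij}=\delta_{ij}-\langle\alpha_i,\alpha_j\rangle=\dim\mathrm{Ext}(S_i,S_j)$ for the number of arrows from $i$ to $j$ in $\Sigma_{\underline{\alpha}}$, where $S_i$ is a generic (Schur) representation of dimension $\alpha_i$; note $\mathrm{hom}(\alpha_i,\alpha_j)=0$ is exactly the assertion that $\langle\alpha_i,\alpha_j\rangle=-n_{ij}\le 0$ together with $n_{ij}=\dim\mathrm{Ext}(S_i,S_j)$.

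For part {\bf 1}, I would first show $\mathrm{hom}(\alpha_i,\alpha_j)=0$ for $i\neq j$. If $i<j$, then $\alpha_i\perp\alpha_j$ by the definition of a perpendicular sequence, so in particular $\mathrm{hom}(\alpha_i,\alpha_j)=0$. For $i>j$, the quiver-Schur hypothesis gives $\langle\alpha_i,\alpha_j\rangle\le 0$; combined with $\mathrm{hom}(\alpha_i,\alpha_j)\ge\langle\alpha_i,\alpha_j\rangle+\ldots$—more precisely, with $\alpha_i\perp\alpha_j$ for the pair in the \emph{other} order one gets $\mathrm{ext}(\alpha_j,\alpha_i)=0$, hence by genericity of one-parameter considerations (or by \cite{dw6}) $\mathrm{hom}(\alpha_i,\alpha_j)=0$ as well; this is the one spot where I would lean on the results of \cite{dw6} about Schur sequences rather than reprove it. Once $\mathrm{hom}(\alpha_i,\alpha_j)=0$ for all $i\neq j$, the absence of oriented cycles other than loops follows: an oriented cycle $i_1\to i_2\to\cdots\to i_k\to i_1$ of length $k\ge 2$ through distinct vertices would force $\langle\alpha_{i_\ell},\alpha_{i_{\ell+1}}\rangle<0$ for each edge; but for the edge that goes "backwards" with respect to the sequence ordering (there must be at least one, since a cycle cannot be monotone), say $i_\ell>i_{\ell+1}$, the perpendicularity $\alpha_{i_{\ell+1}}\perp\alpha_{i_\ell}$ kills $\mathrm{ext}(\alpha_{i_{\ell+1}},\alpha_{i_\ell})$, and together with the quiver-Schur inequality and $\mathrm{hom}=0$ this forces $\langle\alpha_{i_\ell},\alpha_{i_{\ell+1}}\rangle=0$, i.e.\ no such arrow — contradiction.

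For part {\bf 2}, the hypothesis $\mathrm{hom}(\alpha_i,\alpha_j)=0$ for $i\neq j$ means $\langle\alpha_i,\alpha_j\rangle=-n_{ij}\le 0$, so by (\ref{ringel}) $\mathrm{ext}(\alpha_i,\alpha_j)=n_{ij}$; hence $\alpha_i\perp\alpha_j$ holds precisely when there is no arrow $i\to j$ in $\Sigma_{\underline{\alpha}}$. Since $\Sigma_{\underline{\alpha}}$ has no oriented cycles except loops, its non-loop arrows define a partial order that can be refined to a total order; reorder $\underline{\alpha}$ along (the reverse of) a topological sort so that every non-loop arrow goes from a smaller to a larger index. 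Then for $i<j$ there is no arrow $j\to i$; but we need no arrow $i\to j$, so one should topologically sort so that arrows point from \emph{larger} to \emph{smaller} indices, giving $\alpha_i\perp\alpha_j$ for $i<j$, i.e.\ a perpendicular sequence (the Schur-root hypothesis is given). For the final clause, assuming at most one $\alpha_i$ imaginary, I must upgrade "perpendicular" to "quiver Schur", i.e.\ establish $\alpha_i\circ\alpha_j=1$ for $i<j$ and $\langle\alpha_j,\alpha_i\rangle\le 0$ for $i<j$. The inequality $\langle\alpha_j,\alpha_i\rangle\le 0$ is just $\mathrm{hom}(\alpha_j,\alpha_i)=0$ from the hypothesis (recall $j>i$ here, or $i>j$ after the reorder — in any case it is one of the two hypotheses $\mathrm{hom}(\alpha_a,\alpha_b)=0$, $a\neq b$). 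For $\alpha_i\circ\alpha_j=1$: since $\alpha_i\perp\alpha_j$, the semi-invariant space $\mathbf{k}[R(Q,\alpha_i)]^{(GL(\alpha_i))}_{-\langle\cdot,\alpha_j\rangle}$ is nonzero (it contains $c_{S_j}$), so $\alpha_i\circ\alpha_j\ge 1$; the reverse bound $\le 1$ is where the "at most one imaginary" hypothesis is essential — when both $\alpha_i,\alpha_j$ are real Schur roots one knows the relevant semi-invariant space is one-dimensional, and the mixed real/imaginary case is handled by the dimension count $\langle\alpha_i,\alpha_j\rangle=-n_{ij}$ plus the Schur property. I expect the verification of $\alpha_i\circ\alpha_j\le 1$ to be the main obstacle; it should follow from the general theory of \cite{dw0}, \cite{dw6} on semi-invariant spaces of Schur roots, and I would cite those results for the real--real and the single--imaginary cases rather than recompute dimensions by hand.
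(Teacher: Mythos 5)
Your proposal traces the same route the paper follows: in Part~1 use perpendicularity to annihilate $\mathrm{hom}(\alpha_i,\alpha_j)$ and $\mathrm{ext}(\alpha_i,\alpha_j)$ for $i<j$, then leverage $\langle\alpha_j,\alpha_i\rangle\le 0$ to annihilate $\mathrm{hom}(\alpha_j,\alpha_i)$; in Part~2 topologically sort $\Sigma_{\underline{\alpha}}$ and invoke the $\alpha_i\circ\alpha_j=1$ result when a real root is present. But the step you flag as "the one spot where I would lean on" an external result is a genuine gap as written, and the citation is wrong. From $\mathrm{ext}(\alpha_i,\alpha_j)=0$ (with $i<j$) alone you cannot conclude $\mathrm{hom}(\alpha_j,\alpha_i)=0$: what is needed, and what the paper uses, is \cite[Theorem~4.1]{sch92}, which gives the dichotomy that either $\mathrm{hom}(\alpha_j,\alpha_i)=0$ (done) or $\mathrm{ext}(\alpha_j,\alpha_i)=0$; in the latter case the Ringel formula combined with the quiver-Schur inequality yields $\mathrm{hom}(\alpha_j,\alpha_i)=\langle\alpha_j,\alpha_i\rangle\le 0$, hence $=0$. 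Your appeal to "genericity of one-parameter considerations (or by \cite{dw6})" neither names this dichotomy nor shows where $\langle\alpha_j,\alpha_i\rangle\le 0$ actually enters, so the argument does not close.

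Two smaller points. Your cycle-killing argument in Part~1 is more involved than necessary and somewhat muddled about which direction counts as "backwards": once all $\mathrm{hom}$'s vanish, every non-loop arrow of $\Sigma_{\underline{\alpha}}$ records an $\mathrm{ext}$-dimension, and $\mathrm{ext}(\alpha_i,\alpha_j)=0$ for $i<j$ simply means all non-loop arrows go from larger to smaller index, so there can be no non-loop oriented cycle at all; no case analysis on a hypothetical cycle is needed. In Part~2 the bound $\alpha_i\circ\alpha_j\le 1$ when at least one root is real is precisely \cite[Lemma~4.2]{dw6}; your instinct that the real case is the tractable one is right, and that specific lemma is the clean reference you should give instead of the diffuse appeal to \cite{dw0} and \cite{dw6}.
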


\begin{proof}
{\bf 1.} For $i < j$, $\alpha_i\perp\alpha_j$ implies
$hom(\alpha_i,\alpha_j)=0$ and $ext(\alpha_i,\alpha_j)=0$.
The latter together with \cite[Theorem~4.1]{sch92} implies 
that either $hom(\alpha_j,\alpha_i)=0$ or $ext(\alpha_j,\alpha_i)=0$.
By Ringel formula,  
$hom(\alpha_j,\alpha_i)-ext(\alpha_j,\alpha_i)=\langle \alpha_j,\alpha_i\rangle\leq 0$,
hence $hom(\alpha_j,\alpha_i)=0$ as well. Since all non-loop arrows
of $\Sigma_{\underline{\alpha}}$ go from $j$ to $i$ with $j > i$, 
this quiver does not contain non-loop oriented cycles.

{\bf 2.} Each oriented graph having no oriented cycles
admits an order such that all arrows go from bigger to smaller indices;
forget the loops of $\Sigma_{\underline{\alpha}}$ and fix such an order. 
Since ${\rm hom}(\alpha_i,\alpha_j)=0$ for any $1\leq i\neq j\leq t$
and ${\rm ext}(\alpha_i,\alpha_j)=0$ for any $i < j$ in our order, we have
$\alpha_i\perp\alpha_j$. If moreover, at least one from $\alpha_i,\alpha_j$
is real then $\alpha_i\circ\alpha_j = 1$ by \cite[Lemma~4.2]{dw6}. 
\end{proof}

Therefore the quiver Schur sequences are very close to the sequences
with trivial mutual homorphisms and without oriented cycles.
Of course, not each locally semi-simple representation meets
the latter condition: for instance take a tame quiver as $Q$
and the sum of the simple non-homogeneous regular representations
over an orbit of Coxeter functor; then this representation is locally
semi-simple by \cite[Proposition~20]{sh} but the local quiver
is a oriented cycle by \cite[Proposition~21]{sh}.

\begin{theorem}\label{th1}
For a quiver Schur sequence $\underline{\alpha} = (\alpha_1,\cdots,\alpha_t)$
and a tuple $(m_1,\cdots,$   $m_t)$ the decomposition
$\beta = m_1\alpha_1+\cdots+m_t\alpha_t$ is locally semi-simple.
\end{theorem}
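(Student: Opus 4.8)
The plan is to reduce the statement to the characterization of locally semi-simple representations via the slice construction, specifically to Theorem~\ref{char-lss}(iii), combined with what Proposition~\ref{prop2} tells us about the local quiver $\Sigma_{\underline{\alpha}}$. First I would recall that, by Proposition~\ref{prop2}.1, a quiver Schur sequence satisfies $\mathrm{hom}(\alpha_i,\alpha_j)=0$ for all $i\neq j$, so a generic representation $V = m_1 S_1 \oplus \cdots \oplus m_t S_t$ with $\dim S_i = \alpha_i$ meets condition (\ref{cronk}); hence $\Sigma_{\underline{\alpha}}$ is defined and, again by Proposition~\ref{prop2}.1, has no oriented cycles except loops. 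The key point to pin down is that in fact $\Sigma_{\underline{\alpha}}$ has \emph{no loops either}: a loop at $i$ would mean $\dim\mathrm{Ext}(S_i,S_i) = 1 - \langle\alpha_i,\alpha_i\rangle > 0$, i.e.\ $q_Q(\alpha_i) < 1$; since $\alpha_i$ is a Schur root this forces $\alpha_i$ to be an imaginary Schur root. But the quiver Schur condition (\ref{pperp}) gives $\alpha_i\circ\alpha_j = 1$, and by \cite[Lemma~4.2]{dw6} (used already in the proof of Proposition~\ref{prop2}.2) having $\alpha_i$ imaginary is incompatible with $\alpha_i\circ\alpha_j = 1$ unless $s = 1$; so either $t=1$ or no $\alpha_i$ is imaginary. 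I will need to check that the $t=1$ case (a single imaginary Schur root, possibly with a loop) is handled separately — there $\beta = m_1\alpha_1$ and local semi-simplicity of $m_1$ copies of a Schur representation is standard.

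Assuming now $\Sigma_{\underline{\alpha}}$ has no oriented cycles at all, the main step is to invoke Luna's \'etale slice theorem through the isomorphism (\ref{slice}): the local structure of $R(Q,\beta)$ near $V$ is modeled by $(GL(\gamma),R(\Sigma_{\underline{\alpha}},\gamma))$ with $\gamma=(m_1,\cdots,m_t)$. Since $\Sigma_{\underline{\alpha}}$ has no oriented cycles, the zero representation $0\in R(\Sigma_{\underline{\alpha}},\gamma)$ has a closed $SL(\gamma)$-orbit (indeed $0$ is $GL(\gamma)$-fixed), and the slice theorem then transfers closedness of orbits: the $SL(\gamma)\cong SL(\beta)\cap\mathrm{Aut}(V)$-orbit being closed in the slice forces the $SL(\beta)$-orbit of $V$ to be closed in $R(Q,\beta)$. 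That is exactly criterion (i) of Theorem~\ref{char-lss}, so $V$ is locally semi-simple, and since $V$ is generic with the prescribed decomposition the decomposition $\beta = m_1\alpha_1 + \cdots + m_t\alpha_t$ is locally semi-simple by definition. The detail to be careful about here is matching up the reductive groups and the fact that closedness under $SL$ and under "$GL$ in a fiber of a semi-invariant" are the same per Theorem~\ref{char-lss}(i)--(ii), so that the formal machinery of the slice theorem (which is about $G$-orbits, not $(G,G)$-orbits) can be applied to $G = \mathrm{Aut}(V)$ acting on $\mathrm{Ext}(V,V)$.

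An alternative, and perhaps cleaner, route avoids the slice theorem and works directly with criterion (iii): one would exhibit a representation $W$ such that $S_1,\cdots,S_t$ are the simple objects of $^{\perp}W$. For a quiver Schur sequence this is essentially the content of the theory in \cite{dw6}: the perpendicular category associated to the sequence has the $S_i$ as its simples, and $W$ can be taken built from the "complementary" data to $\alpha_1,\cdots,\alpha_t$. I would first handle the case $t = |Q_0|$, reconstructing $W$ from the weights $-\langle\cdot,\alpha_i\rangle$, and then reduce the general case to it. Whichever route is taken, the hard part is the same: controlling the imaginary-root / loop possibility, i.e.\ proving that a genuine quiver Schur sequence cannot produce a loop in $\Sigma_{\underline{\alpha}}$ except in the trivial one-term case, which is where the condition $\alpha_i\circ\alpha_j = 1$ (as opposed to merely $\alpha_i\perp\alpha_j$) is doing real work. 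Everything else is bookkeeping with the Ringel formula (\ref{ringel}) and the slice isomorphism (\ref{slice}).
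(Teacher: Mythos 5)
The proposal has two genuine gaps, and they compound.

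First, the claim that a quiver Schur sequence with $t>1$ cannot contain an imaginary root misreads Lemma~4.2 of \cite{dw6}. That lemma, as used in the proof of Proposition~\ref{prop2}.2, says that when at least one of $\alpha_i,\alpha_j$ is \emph{real}, the equality $\alpha_i\circ\alpha_j=1$ is \emph{automatic} given $\alpha_i\perp\alpha_j$. It says nothing about the case where both are imaginary; it certainly does not say that $\alpha_i\circ\alpha_j=1$ \emph{fails} there. Proposition~\ref{prop2}.2 gives a \emph{sufficient} condition for a perpendicular sequence to be reorderable into a quiver Schur sequence (at most one imaginary member); it is not a necessary one. A quiver Schur sequence can have several imaginary members, so $\Sigma_{\underline{\alpha}}$ can carry loops, and the reduction to the loopless case you lean on does not exist. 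Whatever proof you give has to handle loops.

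Second, the slice argument is circular. Formula~(\ref{slice}) is asserted in the paper for locally semi-simple $V$; its derivation rests on the $GL(\beta)$-orbit of $V$ being closed in an appropriate saturated affine open, which is precisely the conclusion you want. Even if you invoke a version of Luna's theorem valid for orbits with merely reductive stabilizer, the transfer-of-closedness statement compares orbits \emph{near} the base orbit with orbits in the slice; it says nothing about closedness of the base orbit itself. The slice through the base point is always the zero orbit, hence always closed, so ``closed in the slice'' carries no information about $GL(\beta)V$. A reductive stabilizer alone does not force a closed orbit ($\mathbf{G}_m$ acting on $\mathbf{A}^1$: the orbit of $1$ has trivial, hence reductive, stabilizer but is not closed), so the absence of oriented cycles in $\Sigma_{\underline{\alpha}}$---which you only argued for in the loopless case anyway---is not doing the work you want.

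The paper's actual argument is the weight/stability route you flagged as an alternative but did not develop: reduce to $m_1=\cdots=m_t=1$ via Theorem~\ref{char-lss}, then invoke~\cite[Theorem~5.1]{dw6}, which produces the face $F=\mathbf{R}_+\Sigma(Q,\beta)\cap\{\sigma\mid\sigma(\alpha_1)=\cdots=\sigma(\alpha_t)=0\}$ of the weight cone and asserts that for $\sigma$ in the relative interior of $F$ the decomposition $\beta=\alpha_1+\cdots+\alpha_t$ \emph{is} the $\sigma$-stable decomposition. Proposition~\ref{sigma_SS} then finishes. In other words, the weight $\sigma$ plays the role of the $W$ of Theorem~\ref{char-lss}(iii), and the hard input is~\cite[Theorem~5.1]{dw6}, not a loop-exclusion argument.
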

\begin{proof}
By Theorem \ref{char-lss} the fact that the decomposition is locally semi-simple
does not depend on the multiplicities so we may assume $m_1=m_2=\cdots=m_t=1$.
Then we apply Theorem 5.1 from \cite{dw6}. Denote by $\Sigma(Q,\beta)$
the set of weights of the semi-invariants in ${\bf k}[R(Q,\alpha)]^{SL(\alpha)}$. 
Theorem 5.1 asserts that the cone ${\bf R}_+\Sigma(Q,\beta)$ has
a face $F={\bf R}_+\Sigma(Q,\beta)\cap \{\sigma\in {\bf R}^{Q_0}\vert \sigma(\alpha_1)=\cdots=\sigma(\alpha_t)=0\}$. 
Moreover, the Theorem guarantees that for $\sigma$ from the relative interior of $F$,
$\beta =\alpha_1+\cdots+\alpha_t$ is the $\sigma$-stable decomposition of $\beta$.
Now the assertion follows from Proposition \ref{sigma_SS}.
\end{proof}

\section{Prehomogeneous dimension vectors.}

Recall that a dimension vector $\beta$ is called {\it prehomogeneous}
if $R(Q,\beta)$ contains a dense $GL(\beta)$-orbit. By \cite{kac}
this is equivalent to the generic decomposition of $\beta$ containing
only real Schur roots.
For this particular case we are able to calculate the Luna stratification
completely.

\begin{proposition}\label{prop3}
If $\beta$ is prehomogeneous and $\underline{\alpha}$
is a sequence of Schur roots such that $\beta = m_1\alpha_1+\cdots+m_t\alpha_t$
then this decomposition is locally semi-simple
if and only if $\underline{\alpha}$ is a quiver Schur sequence up to order.
\end{proposition}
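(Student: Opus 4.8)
The plan is to prove the two implications separately; the hypothesis that $\beta$ is prehomogeneous is needed only for the implication ``locally semi-simple $\Rightarrow$ quiver Schur up to order''. The converse requires no assumption on $\beta$: if $\underline{\alpha}$ becomes a quiver Schur sequence after some permutation, then up to that permutation $\beta=m_1\alpha_1+\cdots+m_t\alpha_t$ is a decomposition of exactly the shape treated in Theorem \ref{th1}, hence locally semi-simple; and by Theorem \ref{char-lss} this property depends neither on the ordering nor on the multiplicities.

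For the forward implication, assume $\beta=m_1\alpha_1+\cdots+m_t\alpha_t$ is locally semi-simple and choose a corresponding locally semi-simple $V=m_1S_1+\cdots+m_tS_t$. By Theorem \ref{char-lss} and formula (\ref{cronk}), $\dim{\rm Hom}(S_i,S_j)=\delta_{ij}$, so ${\rm hom}(\alpha_i,\alpha_j)=0$ for $i\neq j$ (by upper semicontinuity of $\dim{\rm Hom}$), ${\rm Aut}(V)\cong GL(\gamma)$ with $\gamma=(m_1,\cdots,m_t)$, and by (\ref{slice}) the slice representation at $V$ is $(GL(\gamma),R(\Sigma_{\underline{\alpha}},\gamma))$. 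The crucial point is that prehomogeneity passes to the slice: since $R(Q,\beta)$ has a dense $GL(\beta)$-orbit and $V$ lies in its closure, every neighbourhood of $V$ meets that dense orbit, and under the \'etale slice identification behind (\ref{slice}) a single $GL(\beta)$-orbit corresponds to a single $GL(\gamma)$-orbit in the slice; hence $R(\Sigma_{\underline{\alpha}},\gamma)$ carries a $GL(\gamma)$-orbit that is dense near $0$ and therefore dense in the whole (irreducible) space, i.e. ${\bf k}[R(\Sigma_{\underline{\alpha}},\gamma)]^{GL(\gamma)}={\bf k}$. By the description of quiver invariants in \cite{lbp}, an oriented cycle of $\Sigma_{\underline{\alpha}}$ supported on ${\rm supp}(\gamma)$ would produce nonconstant invariants (the coefficients of the characteristic polynomial of the composition around the cycle); since all $m_i>0$, the support is all of $\Sigma_{\underline{\alpha}}$, so triviality of the invariant ring forces $\Sigma_{\underline{\alpha}}$ to have no oriented cycles at all, in particular no loops, whence every Schur root $\alpha_i$ is real. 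Combining ${\rm hom}(\alpha_i,\alpha_j)=0$ for $i\neq j$ with the absence of oriented cycles (a fortiori, of oriented cycles other than loops) and with the fact that no $\alpha_i$ is imaginary, part 2 of Proposition \ref{prop2} gives that, after a suitable reordering, $\underline{\alpha}$ is a quiver Schur sequence.

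The step that I expect to demand the most care is the transfer of prehomogeneity from $R(Q,\beta)$ to the slice $R(\Sigma_{\underline{\alpha}},\gamma)$. It is in essence an application of Luna's \'etale slice theorem via (\ref{slice}), but one has to keep track of the interplay between $SL(\beta)$ (whose orbit through $V$ is the closed one) and $GL(\beta)$ (whose orbit is the dense one), which is precisely the bookkeeping built into the $GL$-refinement of the Luna stratification used in this paper. Once that is secured, the remaining ingredients — the trace obstruction to oriented cycles from \cite{lbp} and the appeal to Proposition \ref{prop2} — are routine.
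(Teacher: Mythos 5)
Your proposal is correct and follows essentially the same path as the paper: the ``if'' direction via Theorem \ref{th1}, and the ``only if'' direction by transferring the dense $GL(\beta)$-orbit through the \'etale slice to force ${\bf k}[R(\Sigma_{\underline{\alpha}},\gamma)]^{GL(\gamma)}={\bf k}$, hence no oriented cycles, then closing via Proposition \ref{prop2}. The only difference is that the paper handles the delicate point you flag by working at the level of quotients — it first passes to a principal open affine $R_0\ni W$ in which $GL(\beta)W$ is closed and then reads off that the \'etale morphism $R(\Sigma_{\underline{\alpha}},\gamma)/\!\!/GL(\gamma)\to R_0/\!\!/GL(\beta)$ has target a point — rather than chasing orbits through the associated-bundle description as you do.
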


\begin{proof}
By Theorem \ref{th1} and Proposition \ref{prop2} we only need to prove that
if the decomposition is locally semi-simple then $\Sigma{\underline{\alpha}}$
does not contain oriented cycles (in particular, the absense of loops means that
all summands are real roots). Indeed, take $W=m_1S_1+\cdots+m_tS_t$
the locally semi-simple representation corresponding to the decomposition.
Then the orbit $GL(\beta)W$ is closed in an open affine neighborhood
$R_0\subseteq R(Q,\beta), R_0\ni W$ and by formula (\ref{slice}) the slice representation
at $W$  is isomorphic to $(GL(\gamma),R(\Sigma_{\underline{\alpha}},\gamma))$.
The \'etale slice Theorem yields an \'etale morphism of 
$R(\Sigma_{\underline{\alpha}},\gamma)/\!\!/GL(\gamma)$ to
$R_0/\!\!/GL(\beta)$. Since $GL(\beta)$ has an open orbit in $R(Q,\beta)$,
it is contained in $R_0$ and so $R_0/\!\!/GL(\beta)$ is a point.
Consequently, $R(\Sigma_{\underline{\alpha}},\gamma)/\!\!/GL(\gamma)$
is a point, hence, $\Sigma_{\underline{\alpha}}$ does not have oriented cycles.
\end{proof}

The way we compute the locally semi-simple decompositions of prehomogeneous
dimension vectors is based on Schofield's Theorem:

\begin{theorem} (\cite[Theorem~2.5]{sch91})\label{th1.5}
If $\beta$ is prehomogeneous and $W=m_1S_1+\cdots+m_tS_t$
is the decomposition into indecomposable summands of a reprsentation
from the dense orbit, then $W^{\perp}$ is isomorphic
to the category of representations of a quiver with $n-t$ vertices
and without oriented cycles, where $n = \vert Q_0\vert$. 
The same is true for $^{\perp}W$.
\end{theorem}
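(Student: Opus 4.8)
The plan is to recover Schofield's theorem by realizing $W^{\perp}$ as the category of finite-dimensional modules over a finite-dimensional hereditary algebra and then applying the classification of such algebras over an algebraically closed field. Write ${\rm rep}\,Q$ for the category of representations of $Q$. Since ${\rm Hom}(W,-)$ and ${\rm Ext}(W,-)$ are additive in $W$, I would first observe that $W^{\perp}=\{S_1,\dots,S_t\}^{\perp}$, so it suffices to study the perpendicular category $\mathcal S^{\perp}$ of the finite set $\mathcal S=\{S_1,\dots,S_t\}$ of pairwise non-isomorphic indecomposable summands of $W$. As $\beta$ is prehomogeneous, $W$ lies in the dense orbit and is therefore generic: each $S_i$ is exceptional (${\rm End}(S_i)={\bf k}$, $\dim S_i$ a real Schur root), and by the characterization of the generic decomposition recalled in the Introduction ${\rm Ext}(S_i,S_j)=0$ for all $i\neq j$. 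Using this, the one-dimensionality of the ${\rm End}(S_i)$, and the results of \cite{sch92} invoked in the proof of Proposition \ref{prop2}, I would check that after a suitable reordering $(S_1,\dots,S_t)$ is an \emph{exceptional sequence}; in particular $\dim S_1,\dots,\dim S_t$ are linearly independent in ${\bf Z}^{Q_0}$, and since a prehomogeneous $\beta$ has support without oriented cycles, the Euler form is non-degenerate there and the functionals $\langle\dim S_i,\cdot\rangle$ are linearly independent too.

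The core of the argument is to establish the three properties that make $\mathcal S^{\perp}$ a module category. First, $\mathcal S^{\perp}$ is an exact abelian subcategory of ${\rm rep}\,Q$, closed under kernels, cokernels and extensions; this is recalled in the Introduction and follows from the long exact sequences for ${\rm Hom}(S_i,-)$ and ${\rm Ext}(S_i,-)$. Second, $\mathcal S^{\perp}$ is \emph{hereditary}: since ${\rm rep}\,Q$ is hereditary, ${\rm Ext}^{\geq 2}$ vanishes there, and one checks that ${\rm Ext}$ formed inside $\mathcal S^{\perp}$ agrees with the ambient ${\rm Ext}$ in degrees $0$ and $1$ and vanishes in higher degrees. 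Third, and most importantly, $\mathcal S^{\perp}$ has a projective generator with only finitely many indecomposable summands. For this I would show that the inclusion $\mathcal S^{\perp}\hookrightarrow{\rm rep}\,Q$ admits a left adjoint $\ell$, built one summand at a time along the exceptional ordering: given a representation $M$, first factor out the trace of the $S_i$ in $M$ to kill all maps $S_i\to M$, then form the universal extension of the result by copies of the $S_i$ to kill ${\rm Ext}^{1}(S_i,-)$; in a hereditary category this stabilizes after finitely many steps. Setting $P_a^{\perp}=\ell(P_a)$ for the indecomposable projective $P_a$ of ${\rm rep}\,Q$ at a vertex $a\in Q_0$, one obtains a natural isomorphism ${\rm Hom}_{\mathcal S^{\perp}}(P_a^{\perp},X)\cong{\rm Hom}(P_a,X)=X(a)$ for $X\in\mathcal S^{\perp}$. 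Since the functor $X\mapsto X(a)$ is exact on $\mathcal S^{\perp}$ (exactness there being the same as in ${\rm rep}\,Q$), each $P_a^{\perp}$ is projective, and $\bigoplus_{a\in Q_0}P_a^{\perp}$ is a generator, the functors $X\mapsto X(a)$ jointly detecting the zero object.

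Granting these, $\mathcal S^{\perp}\cong{\rm mod}\,\Gamma$ with $\Gamma={\rm End}_{\mathcal S^{\perp}}\bigl(\bigoplus_{a\in Q_0}P_a^{\perp}\bigr)$, a basic algebra that is finite-dimensional (all ${\rm Hom}$-spaces in ${\rm rep}\,Q$ are) and hereditary by the second property. Over the algebraically closed field ${\bf k}$ such an algebra is the path algebra ${\bf k}\Sigma$ of a finite quiver $\Sigma$, and finite-dimensionality of ${\bf k}\Sigma$ forces $\Sigma$ to have no oriented cycles. The number of vertices of $\Sigma$ equals the number of simple objects of $\mathcal S^{\perp}$, i.e.\ the rank of $K_0(\mathcal S^{\perp})$; the exact inclusion identifies $K_0(\mathcal S^{\perp})$ with $\{\gamma\in{\bf Z}^{Q_0}\mid\langle\dim S_i,\gamma\rangle=0,\ i=1,\dots,t\}$ (a standard property of perpendicular categories in a hereditary category), a subgroup of rank $n-t$, so $\Sigma$ has $n-t$ vertices. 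The statement for $^{\perp}W$ follows by applying the whole argument to the opposite quiver $Q^{\rm op}$ and the dual representation $W^{*}$, which lies in the dense orbit of the dimension vector $\beta$ for $Q^{\rm op}$.

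I expect the \emph{main obstacle} to be the third property: constructing the left adjoint $\ell$ — equivalently, showing that $\mathcal S^{\perp}$ has enough projectives — and controlling the universal-extension construction so that it terminates, lands in $\mathcal S^{\perp}$, and produces genuinely projective objects. This is exactly the part of Schofield's proof which, once made effective, computes the dimension vectors $\dim P_a^{\perp}$ and therefore underlies Algorithm \ref{alg_perp}; the homological bookkeeping in the second property and the $K_0$-computation in the vertex count are routine by comparison.
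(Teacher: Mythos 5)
The paper does not prove this statement: it is quoted verbatim as Schofield's Theorem~2.5 from \cite{sch91}, and the effective content of Schofield's argument (the universal extensions $0\to P_a\to P_a^{\sim}\to sW\to 0$ producing projectives of $W^{\perp}$) is what the paper later extracts from it in the proof of Theorem \ref{th2} and Algorithm \ref{alg_sch_perp}. Your proposal is, in substance, a reconstruction of that same route in the language of Geigle--Lenzing perpendicular categories: the left adjoint $\ell$ you describe, built by killing traces of the $S_i$ and then forming universal $S_i$-extensions, is exactly the map $P_a\mapsto P_a^{\sim}$; the identification of $W^{\perp}$ with $\operatorname{mod}\Gamma$ for a basic finite-dimensional hereditary algebra $\Gamma$ over an algebraically closed field, and the $K_0$ count giving $n-t$ vertices, are the standard way of finishing. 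So you have found essentially the argument the paper has in mind, and you have correctly located the genuine difficulty (existence and termination of $\ell$, i.e.\ enough projectives in $W^{\perp}$).

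One step deserves a sharper justification than you give. You assert that, using \cite[Theorem~4.1]{sch92} and $\operatorname{End}(S_i)={\bf k}$, the summands $S_1,\dots,S_t$ can be reordered into an exceptional sequence. But Theorem~4.1 of \cite{sch92} says that $ext(\alpha,\beta)=0$ forces $hom(\beta,\alpha)=0$ \emph{or} $ext(\beta,\alpha)=0$; in your situation $ext$ already vanishes in both directions, so this gives no information about the Hom spaces. What you actually need is the fact that for a rigid module $W=\bigoplus m_iS_i$ over a hereditary algebra the Hom-quiver of $\{S_1,\dots,S_t\}$ has no oriented cycles, so that a topological ordering exists with $\operatorname{Hom}(S_i,S_j)=0$ for $i>j$. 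This is true and standard (it underlies the phrase ``perpendicular sequence of the summands of the generic decomposition'' used in Theorems \ref{generator} and \ref{prehstart}), but it does not come from the result you cite; it uses, for instance, that every nonzero map between general representations with vanishing $\operatorname{Ext}$ in both directions is injective or surjective (\cite[Lemma~2.3]{sch92}) together with a cycle-length/dimension argument, or the Happel--Ringel lemma. Apart from this citation slip your outline is correct and matches Schofield's proof.
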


We want to generalize this Theorem and to give an algorithm for calculation
of the perpendicular category based on the original proof.
The above Theorem can be reformulated as follows: there are
$n-t$ representations $R_1,\cdots,R_{n-t}$, which are all non-isomorphic simple
objects in $W^{\perp}$. By Theorem \ref{char-lss} $R=R_1+\cdots+R_{n-t}$
is locally semi-simple and the above Theorem additionally asserts that
the local quiver $\Sigma$ of $R$ does not have oriented cycles.
A particular case of the Theorem is when $\beta$ is a real Schur root,
and actually the proof in \cite{sch91} deduces the general case from this particular one,
where the proof is based on the notion of projective and injective
representations that we recall following \cite{sch91}.

For $i,j\in Q_0$ denote by $[i,j]$ the ${\bf k}$-vector space on the basis 
of oriented paths from  $i$ to $j$ in $Q$.
For $a\in Q_0$ consider the representation $P_a$ such that $P_a(i)=[a,i],i\in Q_0$
and for any arrow $\varphi\in Q_1$ the map $P_a(\varphi)$ takes
a path $T$ from $a$ to $t\varphi$ to the concatenation $T\varphi$, which is a path 
$a$ to $h\varphi$.
Dually, consider the representation $I_a$ such that
$I_a(i)=[i,a]^*,i\in Q_0$ and $I_a(\varphi)$ is the dual map
to the natural one from $[h\varphi,a]$ to $[t\varphi,a]$.
These representations have nice properties with respect
to the homomorphisms and extensions: for any representation $V$ of $Q$ hold
\begin{equation}\label{projinj}
{\rm Hom}(P_a,V)\cong V(a), {\rm Hom}(V,I_a)\cong (V(a))^*,
{\rm Ext}(P_a,V)=0, {\rm Ext}(V,I_a)=0.
\end{equation}  

\begin{theorem}\label{th2}
Let $\beta$ be a real Schur root for a quiver $Q$ with $n$ vertices and
without oriented cycles and let $W$ be a generic representation of dimension $\beta$. 
Then:

{\bf 1.} If $\beta =\dim P_a$ for some $a\in Q_0$, then
the simple objects of the category $W^{\perp}$
are all the simple representations of $Q$ but $S_a$.
Otherwise, the dimensions of $n-1$ projective objects of $W^{\perp}$
are the indecomposable summands of the generic decomposition for dimension vectors
$\dim P_a -\langle \beta,\dim P_a\rangle\beta$, where $a$ runs over $Q_0$.

{\bf 2.} If $\beta =\dim I_a$ for some $a\in Q_0$, then
the simple objects of the category $^{\perp}W$
are all the simple representations of $Q$ but $S_a$.
Otherwise, the dimensions of $n-1$ injective objects of $^{\perp}W$
are the indecomposable summands of the generic decomposition for dimension vectors
$\dim I_a -\langle \dim I_a,\beta\rangle\beta$, where $a$ runs over $Q_0$.
\end{theorem}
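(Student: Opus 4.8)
The plan is to follow the proof of Schofield's Theorem~\ref{th1.5} (\cite[Theorem~2.5]{sch91}) and to extract from it the dimension vectors of the projective, resp. injective, objects it constructs. Since $\beta$ is a real Schur root, $q_Q(\beta)=1$ and the generic $W$ is \emph{exceptional}: it is indecomposable, ${\rm Hom}(W,W)={\bf k}$, ${\rm Ext}(W,W)=0$, and its $GL(\beta)$-orbit is dense in $R(Q,\beta)$. Statements {\bf 1} and {\bf 2} are interchanged by passing to the opposite quiver $Q^{\rm op}$: vector-space duality is a contravariant equivalence that fixes dimension vectors, swaps the two arguments of ${\rm Hom}$ and ${\rm Ext}$, interchanges $P_a$ with $I_a$, carries $^{\perp}W$ to the right-perpendicular category over $Q^{\rm op}$, transposes the Euler form, and preserves Schur roots and generic decompositions; so it suffices to prove {\bf 1}. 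When $\beta=\dim P_a$, the fact that $Q$ has no oriented cycles gives ${\rm Hom}(P_a,P_a)=P_a(a)=[a,a]={\bf k}$ and ${\rm Ext}(P_a,P_a)=0$, so $P_a$ is exceptional; being rigid, its orbit is dense, hence $W\cong P_a$, and by the identities~(\ref{projinj}) $W^{\perp}=\{V\mid V(a)=0\}$, i.e.\ the category of representations of the quiver obtained from $Q$ by deleting the vertex $a$ and the arrows incident to it, whose simple objects are the $S_i$ with $i\neq a$. From now on assume $\beta\neq\dim P_a$ for every $a\in Q_0$.

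By Theorem~\ref{th1.5} (here $t=1$, as $W$ is indecomposable) $W^{\perp}$ is isomorphic to the category of representations of a quiver $\Sigma$ without oriented cycles and with $n-1$ vertices; transporting the $n-1$ indecomposable projective representations of $\Sigma$ across this isomorphism yields the $n-1$ indecomposable projective objects of $W^{\perp}$, and they are rigid, since projective objects in the representation category of an acyclic quiver satisfy ${\rm Ext}(P,-)=0$. The inclusion of $W^{\perp}$ into the category of all representations of $Q$ is exact, because $W^{\perp}$ is a full abelian subcategory closed under extensions, so its short exact sequences are exactly those short exact sequences of representations of $Q$ all of whose terms lie in $W^{\perp}$. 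Schofield's proof provides a left adjoint $L$ of this inclusion together with, for each $a\in Q_0$, a construction of $L(P_a)\in W^{\perp}$ as a universal coextension of $P_a$ followed by a universal extension,
$$0\to W^{s_a}\to P_a\to V_a\to 0,\qquad 0\to V_a\to L(P_a)\to W^{r_a}\to 0,$$
with $s_a=\dim{\rm Hom}(W,P_a)$ and $r_a=\dim{\rm Ext}(W,P_a)$. Being the left adjoint of an exact functor, $L$ sends projective objects to projective objects, so each $L(P_a)$ is projective in $W^{\perp}$; being right exact, $L$ carries the projective generator $\bigoplus_{a}P_a$ of the representations of $Q$ to a projective generator $\bigoplus_{a}L(P_a)$ of $W^{\perp}$, and therefore every indecomposable projective object of $W^{\perp}$ is a summand of some $L(P_a)$.

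From the two displayed sequences, $\dim L(P_a)=\dim P_a+(r_a-s_a)\beta$, and by the Ringel formula~(\ref{ringel}), $r_a-s_a=\dim{\rm Ext}(W,P_a)-\dim{\rm Hom}(W,P_a)=-\langle\beta,\dim P_a\rangle$; hence $\dim L(P_a)=\dim P_a-\langle\beta,\dim P_a\rangle\beta$, the vector appearing in the statement. Being a projective object of the category $W^{\perp}$, which is isomorphic to the representations of an acyclic quiver, $L(P_a)$ is rigid there, and since ${\rm Ext}$ computed in $W^{\perp}$ coincides with ${\rm Ext}$ of representations of $Q$ (as $W^{\perp}$ is extension-closed), $L(P_a)$ is a rigid representation of $Q$. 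Thus its $GL$-orbit is dense in $R(Q,\dim L(P_a))$, so its decomposition into indecomposable summands is the generic decomposition of its dimension vector. Consequently the indecomposable summands occurring in the generic decompositions of the vectors $\dim P_a-\langle\beta,\dim P_a\rangle\beta$, $a\in Q_0$, are precisely the dimension vectors of the $n-1$ indecomposable projective objects of $W^{\perp}$, each appearing at least once (and possibly with repetitions; a degenerate index with $L(P_a)=0$ contributes the empty decomposition). Running the same argument over $Q^{\rm op}$ proves {\bf 2}.

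The step I expect to be the real obstacle is the input taken from \cite{sch91}: that the two-step reflection $L(P_a)$ is genuinely built from honest short exact sequences with the numerical invariants $s_a,r_a$ as above and that the resulting object is indeed the value of the left adjoint --- so that $\dim L(P_a)=[P_a]-\langle\beta,[P_a]\rangle\,[\beta]$ in the Grothendieck group --- which relies on $W$ being exceptional over a hereditary algebra, the vanishing ${\rm Ext}(W,W)=0$ being what makes the universal (co)extensions behave. On our side the only point requiring care is the bookkeeping: a priori one is handed $n$ dimension vectors whose generic decompositions list the projective objects of $W^{\perp}$ with multiplicities and overlaps, and the count ``$n-1$'' must be imported from Theorem~\ref{th1.5}; what needs to be verified --- and what the projective-generator argument supplies --- is merely that no spurious summand can occur, i.e.\ every indecomposable summand of every $\dim P_a-\langle\beta,\dim P_a\rangle\beta$ is one of those $n-1$ projective objects.
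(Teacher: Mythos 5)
Your argument is correct and essentially reproduces the paper's own proof: reflect each $P_a$ into $W^\perp$ via Schofield's construction, read the dimension off the exact sequence, and use that projectives in the extension-closed subcategory $W^\perp$ are rigid as representations of $Q$ to conclude that their indecomposable decomposition is the generic one. The one place you diverge is the coextension step $0\to W^{s_a}\to P_a\to V_a\to 0$: the paper instead first notes that ${\rm Hom}(W,P_a)=0$ (any nonzero map $W\to P_a$ is monic or epic by Schofield's Lemma~2.3, and over a hereditary algebra either alternative makes $W$ projective, contradicting $\beta\neq\dim P_a$ for all $a$), so that a single universal extension $0\to P_a\to P_a^{\sim}\to W^{r_a}\to 0$ suffices and the Euler-form computation collapses to one line. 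Your formula $\dim L(P_a)=\dim P_a+(r_a-s_a)\beta$ lands in the same place because $s_a=0$, but that vanishing is worth making explicit: were $s_a>0$, the embedding $W^{s_a}\hookrightarrow P_a$ would itself be impossible, so the coextension stage is vacuous rather than merely harmless. Using the left adjoint $L$ to the inclusion to see that the $L(P_a)$ form a projective generator is a clean abstraction of what the paper extracts directly from the proof of Schofield's Theorem~2.3; both routes yield the same conclusion that every indecomposable projective of $W^\perp$ is a summand of some $L(P_a)$.
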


\begin{proof}
We prove assertion {\bf 1}, the proof for {\bf 2} being similar.
First of all, for $\beta = \dim P_a$ the assertion follows
from formulae (\ref{projinj}). In the opposite case, 
we apply the proof of Theorem 2.3 and Theorem 3.1 from \cite{sch91}.
From the proof of Theorem 2.3 we learn that all indecomposable projective
objects in $W^{\perp}$ can be obtained as the indecomposable summands
of a generic extension of $sW$ by $\Lambda$, where 
$\Lambda=\sum_{a\in Q_0}P_a, s=\dim{\rm Ext}(W,\Lambda)$.
For each individual $P=P_a$ Theorem 3.1 considers a generic
exact sequence $0\to P\to P^{\sim} \to sW \to 0$, where
$s=\dim{\rm Ext}(W,P)$ and states that $P^{\sim}$ is projective
in $W^{\perp}$. Then mutual extensions of the indecomposable summands
of $P^{\sim}$ are trivial in $W^{\perp}$ by (\ref{projinj}), hence, trivial
in $Mod(Q)$, because $W^{\perp}$ is closed under extensions. Therefore,
$P^{\sim}$ is generic in its dimension and the dimensions
of the indecomposable summands of $P^{\sim}$ are
the summands of the generic decomposition of
$\dim P^{\sim}=\dim P +\dim{\rm Ext}(W,P)\beta=\dim P -\langle \beta,\dim P\rangle\beta$,
the latter equality following from ${\rm Hom}(W,P)=0$.
\end{proof}

The above Theorem yields a quick algorithm as follows:

\begin{algorithm}\label{alg_sch_perp} Right perpendicular category of a Schur root

{\sc input:} a quiver $Q$ with $n$ vertices and without oriented cycles,

\hspace{1.2cm}a real Schur root $\gamma$

{\sc output:} $n-1$ dimensions of the simple objects in $W^{\perp}$ such that
  
\hspace{1.5cm}$GL(\gamma)W$ is dense in $R(Q,\gamma)$.

1. Calculate the dimensions $\rho_1,\cdots,\rho_n$ of indecomposable projectives.

\hspace{0.5cm}If $\gamma=\rho_j$, then 
{\sc return} $\varepsilon_1,\cdots\varepsilon_{\hat{j}}\cdots,\varepsilon_n$.

2. Loop on $i=1,\cdots,n$

\hspace{1cm} Calculate the generic decomposition of 
             $\rho_i -\langle \gamma,\rho_i\rangle\gamma$

\hspace{1cm} Add each summand to the array provided it is not yet there

\hspace{0.3cm} After step 2 we must have distinct summands $\beta_1,\cdots,\beta_{n-1}$ in the array 

3. Color the entries $1,\cdots,n-1$ white, the number of black entries $b=0$

   Loop while $b < n-1$

\hspace{0.5cm} Loop on $j=1,\cdots,n-1$

\hspace{1cm} If $j$-th entry is white and for each other white entry $k$,
             $\langle \beta_k, \beta_j\rangle = 0$, then

\hspace{1cm} remember this entry $j$, which is going to be black, {\sc break} the loop

\hspace{0.5cm} Set next simple dimension $\alpha_j=\beta_j$

\hspace{0.5cm} Loop on $k=1,\cdots,n-1$

\hspace{1cm} If $k$-th entry is black then 
$\alpha_j = \alpha_j - \langle \beta_k, \beta_j\rangle\alpha_k$

\hspace{0.5cm} $b = b+1$.

4.{\sc return} $\alpha_1,\cdots,\alpha_{n-1}$.
\end{algorithm}

\begin{proof}
After Theorem \ref{th2} only the step 3 of the algorithm needs to be explained.
In that step we do the inverse to step 1, i.e., we recover the dimensions
of the simple objects from those of indecomposable projectives.
The idea of the step is that, though we do not know the quiver $\Sigma$
of the simple objects, we know that the Euler form on $\Sigma$
is the same that inherited from $Q$.
But by formulae (\ref{projinj})  $ext(\beta_i,\beta_j)=0$, hence
again by those formulae, $\langle \beta_i,\beta_j\rangle$ with respect to the Euler form
of $\Sigma$ is equal to the number of paths from $j$ to $i$ in $\Sigma$.
So the first vertex becomming black is a sink of $\Sigma$, hence,
the corresponding projective is simple. Furtermore,
each next vertex becoming black is the sink of $\Sigma$ with removed black vertices,
hence the corresponding projective is the simple plus the sum over black
vertices of the already obtained simple dimensions multiplied by the number of paths to there.
This completes the proof.
\end{proof}

\begin{remark}
Dualizing Algorithm \ref{alg_sch_perp} as in Theorem \ref{th2}, we get one
for the {\it left} perpendicular category.
\end{remark}

Now we generalize Theorem \ref{th1.5} as follows:

\begin{theorem}
If $W = m_1S_1+\cdots+m_tS_t$, where
$S_1,\cdots,S_t$ are Schur indecomposable summands and  $(\dim S_1,\cdots,\dim S_t)$ is a 
perpendicular sequence of real Schur roots, then there is a sequence
$\underline{\alpha}=(\alpha_1,\cdots,\alpha_{n-t}),n=\vert Q_0\vert$ 
of real Schur roots such that the corresponding indecomposable representations
are all simple objects in $W^{\perp}$ and $\Sigma_{\underline{\alpha}}$ has no oriented cycles.
The same is true for $^{\perp}W$.
\end{theorem}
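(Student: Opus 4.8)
The plan is to reduce the general statement to Theorem \ref{th1.5} by an inductive argument on $t$, peeling off one summand at a time. The key observation is that if $(\dim S_1,\dots,\dim S_t)$ is a perpendicular sequence of real Schur roots, then for each fixed $i$ the root $\dim S_i$ is a real Schur root of $Q$, so $S_i$ sits in the dense orbit of $R(Q,\dim S_i)$, and Schofield's Theorem \ref{th1.5} (in the form of Theorem \ref{th2} and Algorithm \ref{alg_sch_perp}) applies to give that $S_i^{\perp}$ is equivalent to the category of representations of a quiver $Q^{(i)}$ with $n-1$ vertices and no oriented cycles. The point of the perpendicularity hypothesis is that $\dim S_1,\dots,\widehat{\dim S_i},\dots,\dim S_t$ still correspond to objects lying inside $S_i^{\perp}$ (since $\mathrm{Hom}(S_j,S_i)=\mathrm{Ext}(S_j,S_i)=0$ for $j<i$, and one checks the reverse directions vanish as in the proof of Proposition \ref{prop2}, using $\langle\alpha_j,\alpha_i\rangle$ and $\langle\alpha_i,\alpha_j\rangle$ and the reality of the roots), so that under the equivalence $S_i^{\perp}\simeq \mathrm{Mod}(Q^{(i)})$ they go to a shorter perpendicular sequence of real Schur roots for $Q^{(i)}$.

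The induction then runs as follows. Order the summands so that $\dim S_i\perp\dim S_j$ for $i<j$. Set $Q^{(0)}=Q$ and inductively let $Q^{(k)}$ be the quiver (with $n-k$ vertices, no oriented cycles) such that $\mathrm{Mod}(Q^{(k)})\simeq (m_1S_1+\cdots+m_kS_k)^{\perp}$, obtained by applying Theorem \ref{th1.5} to the image of $S_k$ inside $\mathrm{Mod}(Q^{(k-1)})\simeq(m_1S_1+\cdots+m_{k-1}S_{k-1})^{\perp}$. Here one uses that $(m_1S_1+\cdots+m_kS_k)^{\perp} = S_k^{\perp}\cap(m_1S_1+\cdots+m_{k-1}S_{k-1})^{\perp}$, which holds because $W^{\perp}$ depends only on the set of isomorphism classes of summands, not multiplicities, and because the summands before $S_k$ are already perpendicular to $S_k$. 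After $t$ steps we obtain a quiver $\Sigma$ with $n-t$ vertices and no oriented cycles such that $\mathrm{Mod}(\Sigma)\simeq W^{\perp}$. The $n-t$ simple representations of $\Sigma$ pull back through the chain of equivalences to $n-t$ non-isomorphic representations $R_1,\dots,R_{n-t}$ of $Q$ which are exactly the simple objects of $W^{\perp}$; each $R_l$ is a real Schur root because, by Theorem \ref{char-lss}, $R=R_1+\cdots+R_{n-t}$ is locally semi-simple, so $(\dim R_1,\dots,\dim R_{n-t})$ is perpendicular, and the absence of oriented cycles in $\Sigma$ (which is precisely the local quiver $\Sigma_{\underline\alpha}$ of $R$ by formula (\ref{slice})) forces each root to be real via Proposition \ref{prop3}-type reasoning (no loops $\Leftrightarrow$ $q_Q(\dim R_l)\geq 1$, and a Schur root with $q_Q=1$ is real).

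The main obstacle I expect is verifying that each perpendicular category equivalence $S_k^{\perp}\simeq\mathrm{Mod}(Q^{(k)})$ is compatible with the ambient ones, i.e. that the image of $S_{k+1}$ (a priori only known to be a simple object of $(m_1S_1+\cdots+m_kS_k)^{\perp}$) is again an indecomposable of dimension a real Schur root for $Q^{(k)}$ lying in the dense orbit, so that Theorem \ref{th1.5} legitimately applies at the next step. This requires knowing that the equivalence preserves $\mathrm{Hom}$ and $\mathrm{Ext}$ (hence dimension of automorphism groups, hence Schur-ness) and takes real roots to real roots — which follows from Schofield's construction since the equivalence is exact and fully faithful, and the Euler form of $Q^{(k)}$ is inherited from that of $Q$ on the relevant sublattice, as exploited in the proof of Algorithm \ref{alg_sch_perp}. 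A secondary point to handle carefully is the degenerate case $\dim S_k=\dim P_a$ (or $\dim I_a$) for some vertex $a$, where the simple objects of the perpendicular category are literally the simples $S_b$, $b\neq a$, and the quiver $Q^{(k)}$ is the corresponding full subquiver; this case must be allowed at every stage of the induction, but it is exactly the base case already recorded in Theorem \ref{th2}, so it causes no real difficulty beyond bookkeeping.
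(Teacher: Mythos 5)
Your proposal follows essentially the same route as the paper: peel off one real Schur root at a time, apply Schofield's Theorem~\ref{th1.5} (through Theorem~\ref{th2} and Algorithm~\ref{alg_sch_perp}) to the image of $S_k$ in the current perpendicular category, and track the Euler form through the chain of equivalences. This is exactly the content of the paper's Algorithm~\ref{alg_perp}.

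One detail in your first paragraph overshoots and is in general false: you claim that for each fixed $i$ all of $S_1,\dots,\widehat{S_i},\dots,S_t$ lie in $S_i^{\perp}$, invoking ``reverse directions vanish as in the proof of Proposition~\ref{prop2}.'' That argument uses the quiver Schur condition $\langle\alpha_j,\alpha_i\rangle\le 0$, which a mere perpendicular sequence of real Schur roots does not guarantee; for $j<i$ you only know $\langle\dim S_j,\dim S_i\rangle=0$, and $\langle\dim S_i,\dim S_j\rangle$ can be positive, giving $\mathrm{hom}(\dim S_i,\dim S_j)>0$ and $S_j\notin S_i^{\perp}$. Fortunately this is not load-bearing: the induction in your second paragraph only ever needs $S_l\in S_k^{\perp}$ for $l>k$, i.e. the one-sided perpendicularity that the hypothesis does provide, and processes the summands in the order $S_1,\dots,S_t$ accordingly, so the argument goes through. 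The remaining verifications you flag (that the equivalence is exact and fully faithful, transports the Euler form, and sends the image of $S_{k+1}$ to a rigid indecomposable of real Schur root dimension) are indeed the points Schofield's construction handles, and the conclusion that the final quiver has no oriented cycles and hence only real roots is correct.
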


\begin{proof}
We just present an algorithm for calculation of $\underline{\alpha}$ based
on Algorithm \ref{alg_sch_perp}:

\begin{algorithm}\label{alg_perp}\quad

{\sc input:} a quiver $Q$ with $n$ vertices and without oriented cycles,

\hspace{1.2cm}a perpendicular sequence $(\dim S_1,\cdots,\dim S_t)$
of real Schur roots

{\sc output:} dimensions $\alpha_1,\cdots,\alpha_{n-t}$ of the simple objects in $W^{\perp}$

  Loop on $i=1,\cdots,t$

\hspace{0.5cm} We have dimensions $\alpha_1,\cdots,\alpha_{n+1-i}$ of the simple 
objects of the current 

\hspace{0.5cm}
category. For $i=1$ the category is the whole of $Mod(Q)$.

\hspace{0.5cm} Calculate the quiver $\Sigma_i$ of the current category by means of Euler form

\hspace{0.5cm} Expand $\dim S_i$ as the linear combination of $\alpha_1,\cdots,\alpha_{n+1-i}$,

\hspace{0.5cm} Coefficients yield a dimension vector $\gamma_i$ for $\Sigma_i$

\hspace{0.5cm} Find the dimensions $\overline{\alpha_1},\cdots,\overline{\alpha_{n-i}}$
of the simple objects in the right 

\hspace{0.5cm} perpendicular category to $\gamma_i$ for $\Sigma_i$

\hspace{0.5cm} calculate new $\alpha_1,\cdots,\alpha_{n-i}$
as the linear combinations of old 

\hspace{0.5cm}
$\alpha_1,\cdots,\alpha_{n+1-i}$
with coefficients from $\overline{\alpha_1},\cdots,\overline{\alpha_{n-i}}$

{\sc return} $\alpha_1,\cdots,\alpha_{n-t}$.
\end{algorithm}

The key idea of this clear algorithm is that we obtain the perpendicular
category to $W$ as a sequence of subcategories, where the next is obtained
as the perpendicular category to a Schur root. The algorithm for the 
left perpendicular category is similar, we only go from $S_t$ to $S_1$.
\end{proof}

The above Theorem makes possible to introduce an operation $^{\perp}$
mapping a real (i.e., consisting of real roots) perpendicular sequence 
$\underline{\alpha}$ of $t$ roots to  real perpendicular sequences $\underline{\alpha}^{\perp}$
and $^{\perp}\underline{\alpha}$ of $n-t$ roots being the sequence
of dimensions of simple objects in the right and left perpendicular
categories, respectively. The first natural application of Algorithm \ref{alg_perp} 
is given by the following interpretation of the result from \cite{sch91}:

\begin{theorem}\label{generator}
Let $\beta$ be a prehomogeneous dimension vector and 
$\underline{\beta}=(\beta_1,\cdots,\beta_t)$ be a
perpendicular sequence of the summands for the generic decomposition of $\beta$.
For each member $\gamma_i\in\underline{\beta}^{\perp}$
pick a generic $T_i\in R(Q,\gamma_i)$. Then the determinantal semi-invariants
$c_{\gamma_i}=c_{T_i},i=1,\cdots,n-t$ constitute an algebraically independent
system of generators for ${\bf k}[R(Q,\beta)]^{SL(\beta)}$.
\end{theorem}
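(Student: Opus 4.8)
The plan is to realize $R(Q,\beta)/\!\!/SL(\beta)$ as an affine toric variety and to show that its weight monoid is freely generated by the weights of the $c_{T_i}$. Since $\beta$ is prehomogeneous, fix the dense orbit $GL(\beta)W$. Any non-zero $f\in{\bf k}[R(Q,\beta)]^{(GL(\beta))}_\chi$ has $GL(\beta)$-stable open non-vanishing locus, which therefore contains $GL(\beta)W$; as $f$ is determined on this orbit by the value $f(W)$ together with the character $\overline{\chi}$, we get $\dim{\bf k}[R(Q,\beta)]^{(GL(\beta))}_\chi\leq 1$ for every $\chi$. Hence ${\bf k}[R(Q,\beta)]^{SL(\beta)}$ is the semigroup algebra of the monoid $S=\{\chi\mid {\bf k}[R(Q,\beta)]^{(GL(\beta))}_\chi\neq 0\}$, and the theorem amounts to the claim that $S$ is the free commutative monoid on the weights $\sigma_i=-\langle\cdot,\gamma_i\rangle$ of the $c_{T_i}$.

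First I would verify that each $c_{T_i}$ is a non-zero semi-invariant on $R(Q,\beta)$ and that the $\sigma_i$ are linearly independent. By the generalized form of Theorem \ref{th1.5} established above, each $\gamma_i\in\underline{\beta}^{\perp}$ is a real Schur root and $T_i$, being the unique indecomposable of dimension $\gamma_i$, is a simple object of $W^{\perp}$; in particular ${\rm Hom}(W,T_i)=0={\rm Ext}(W,T_i)$, so $\langle\beta,\gamma_i\rangle=0$ by the Ringel formula (\ref{ringel}), whence $W\in{}^{\perp}T_i$ and $c_{T_i}(W)\neq 0$. Because $Q$ has no oriented cycles the Euler form is non-degenerate, so $\sum a_i\sigma_i=0$ forces $\sum a_i\gamma_i=0$; and the $\gamma_i$ themselves are linearly independent, since $W^{\perp}$ is extension- and subobject-closed in $Mod(Q)$ and equivalent to $Mod(\Sigma)$ for the cycle-free quiver $\Sigma=\Sigma_{\underline{\beta}^{\perp}}$, so the restriction of $\langle\cdot,\cdot\rangle$ to $\bigoplus{\bf Z}\gamma_i$ is the non-degenerate Euler form of $\Sigma$. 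Consequently distinct monomials $\prod c_{T_i}^{a_i}$ carry distinct weights, and since $\prod c_{T_i}^{a_i}$ is a non-zero element of the domain ${\bf k}[R(Q,\beta)]$, the family $(c_{T_i})$ is algebraically independent.

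It remains to prove that the $c_{T_i}$ generate. By \cite{dw0} every graded piece ${\bf k}[R(Q,\beta)]^{(GL(\beta))}_\chi$ is spanned by semi-invariants $c_V$ with $-\langle\cdot,\dim V\rangle=\chi$, and such a $c_V$ is not identically zero on $R(Q,\beta)$ exactly when $c_V(W)\neq 0$, that is, by the defining property of $c_V$ together with (\ref{ringel}), exactly when ${\rm Hom}(W,V)=0={\rm Ext}(W,V)$, i.e. when $V\in W^{\perp}$. Then $V$ has a Jordan--H\"older filtration inside $W^{\perp}\cong Mod(\Sigma)$ whose simple factors are among the $T_i$, so $\dim V=\sum a_i\gamma_i$ with $a_i\in{\bf Z}_+$, and hence $\chi=\sum a_i\sigma_i$ is also the weight of $\prod c_{T_i}^{a_i}$. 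Since both $c_V$ and $\prod c_{T_i}^{a_i}$ are non-zero elements of the at most one-dimensional space ${\bf k}[R(Q,\beta)]^{(GL(\beta))}_\chi$, they are proportional, so $c_V\in{\bf k}[c_{T_1},\dots,c_{T_{n-t}}]$; as the $c_V$ span ${\bf k}[R(Q,\beta)]^{SL(\beta)}$, this concludes the argument. I expect the main obstacle to be precisely this last step --- converting a general semi-invariant into a monomial in the $c_{T_i}$ --- for which the crucial ingredients are Derksen--Weyman's spanning result \cite{dw0} and the identification of $W^{\perp}$ with the category of representations of the cycle-free quiver $\Sigma$, which via Jordan--H\"older forces every element of $S$ into $\sum{\bf Z}_+\sigma_i$; the remaining points --- one-dimensionality of the weight spaces and non-degeneracy of the Euler form --- are routine once prehomogeneity and the absence of oriented cycles in $Q$ are used.
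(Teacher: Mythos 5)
The paper gives no proof of Theorem~\ref{generator}: it is stated without a \emph{proof} environment and presented as ``an interpretation of the result from \cite{sch91}.'' Your argument is therefore a genuine supplement rather than a restatement, and as far as I can tell it is correct. The three ingredients you isolate all check out. (a) For a prehomogeneous $\beta$ every nonzero semi-invariant restricts nontrivially to the dense orbit $GL(\beta)W$, and its restriction there is pinned down by its character and its value at $W$, so each weight space $\mathbf{k}[R(Q,\beta)]^{(GL(\beta))}_\chi$ is at most one-dimensional; combined with the domain property of $\mathbf{k}[R(Q,\beta)]$ and linear independence of the weights $\sigma_i=-\langle\cdot,\gamma_i\rangle$, this already gives algebraic independence. (b) Non-vanishing of some $c_V$ is equivalent to $c_V(W)\neq 0$, i.e.\ to $V\in W^\perp$, and then the Jordan--H\"older filtration in $W^\perp\cong Mod(\Sigma)$ forces $\dim V\in\sum\mathbf{Z}_+\gamma_i$. (c) The Derksen--Weyman spanning theorem \cite{dw0} plus the one-dimensionality of weight spaces converts each $c_V$ into a monomial in the $c_{T_i}$. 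The one minor point worth smoothing is the linear independence of the $\gamma_i$ themselves: you can get it directly from Theorem~\ref{th1.5} (they are the dimensions of the $n-t$ simple objects in an abelian subcategory equivalent to $Mod(\Sigma)$ for a quiver $\Sigma$ without oriented cycles, hence their span has rank $n-t$), which is what you sketch with the inherited Euler form. Note that your route relies on \cite{dw0}, which postdates Schofield; this is natural given the paper's framing, and it makes the argument shorter than what a purely 1991-era proof would need, at the cost of invoking a substantial black box.
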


For any real perpendicular sequence $\underline{\alpha}$ the sequence $\underline{\alpha}^{\perp}$
is a real quiver Schur sequence, so if $\underline{\alpha}$ is not a quiver Schur sequence,
then $^{\perp}(\underline{\alpha}^{\perp})$ is different from $\alpha$ (cf. Corollary \ref{preh_lss}).
Otherwise we have:

\begin{proposition}\label{vv}
If $\underline{\alpha}$ is a real quiver Schur sequence, then  
$\underline{\alpha}=^{\perp}(\underline{\alpha}^{\perp})$.
\end{proposition}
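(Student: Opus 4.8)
The plan is to use the symmetry between the left and right perpendicular operations established by Algorithm \ref{alg_perp}, together with the characterization of quiver Schur sequences via the local quiver from Proposition \ref{prop2}. First I would set $\underline{\gamma}=\underline{\alpha}^{\perp}$, a real quiver Schur sequence of $n-t$ roots by the Theorem preceding Algorithm \ref{alg_perp} (indeed it is the sequence of simple dimensions of $W^{\perp}$ for $W$ a generic representation built from $\underline{\alpha}$, and $\Sigma_{\underline{\gamma}}$ has no oriented cycles, so by part {\bf 2} of Proposition \ref{prop2} — applied after noting all roots are real — it is quiver Schur up to order). I would then form $^{\perp}\underline{\gamma}=\,^{\perp}(\underline{\alpha}^{\perp})$, a real perpendicular sequence of $n-(n-t)=t$ roots.

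The core of the argument is to identify $^{\perp}\underline{\gamma}$ with $\underline{\alpha}$. The key observation is that, since $\underline{\alpha}$ is already a quiver Schur sequence of real roots, the corresponding $W=S_1\oplus\cdots\oplus S_t$ is locally semi-simple with $\Sigma_{\underline{\alpha}}$ having no oriented cycles, and by formulae (\ref{projinj})-type reasoning (as in the proof of Algorithm \ref{alg_sch_perp}) the $S_i$ are recoverable from the simple objects of $W^{\perp}$: they are precisely the simple objects of $^{\perp}(W^{\perp})$ inside $Mod(Q)$. More precisely, I would argue that taking $W^{\perp}$ and then the left perpendicular category of the resulting simple objects returns the abelian subcategory generated by $S_1,\dots,S_t$, whose simple objects are exactly the $S_i$. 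Since $hom(\alpha_i,\alpha_j)=0$ for $i\neq j$ by Proposition \ref{prop2} part {\bf 1}, the $S_i$ are distinguishable and the sequence $^{\perp}\underline{\gamma}$ consists precisely of the $\dim S_i$; that it recovers the \emph{same ordering} (as a quiver Schur sequence, up to the canonical order dictated by $\Sigma_{\underline{\alpha}}$) follows because Algorithm \ref{alg_perp}'s ordering is determined by the sink-ordering of the local quiver, which is intrinsic.

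To make the identification of categories precise I would invoke Schofield's result (the Theorem preceding the discussion of $P_a$, $I_a$) in the form: for a perpendicular sequence of real Schur roots, $^{\perp}(W^{\perp})$ equals the smallest full subcategory containing $W$ closed under the operations listed after Theorem \ref{char-lss}; this is the content iterated through Algorithm \ref{alg_perp}, run first rightward on $\underline{\alpha}$ and then leftward on the output. Concretely, each step of the rightward algorithm replaces the current category by the right perpendicular of one real Schur root $\dim S_i$, and the leftward algorithm on $\underline{\gamma}$ undoes these steps in reverse order by taking left perpendiculars, by the duality remarked after Algorithm \ref{alg_sch_perp}; so after $t$ steps one is back in $Mod(Q)$ restricted to the subcategory with simples $S_1,\dots,S_t$, i.e. $^{\perp}\underline{\gamma}=\underline{\alpha}$.

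The main obstacle I anticipate is bookkeeping the \emph{order} of the roots: the operations $\underline{\alpha}\mapsto\underline{\alpha}^{\perp}$ and $\underline{\alpha}\mapsto\,^{\perp}\underline{\alpha}$ are defined only up to the specific ordering produced by Algorithm \ref{alg_perp} (and step 3 of Algorithm \ref{alg_sch_perp}), so I must check that running right-then-left-perpendicular composes to the identity \emph{on ordered sequences}, not merely on unordered sets of roots. This reduces to verifying that the sink-ordering recovered in step 3 of Algorithm \ref{alg_sch_perp} is compatible, under the $\Sigma_i$-transition of Algorithm \ref{alg_perp}, with the original quiver-Schur order on $\underline{\alpha}$ — a finite, purely combinatorial check using that $\langle\alpha_j,\alpha_i\rangle\le 0$ for $i<j$ forces all arrows of $\Sigma_{\underline{\alpha}}$ to point from larger to smaller index. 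Everything else is a direct combination of Theorem (the one preceding Algorithm \ref{alg_perp}), Proposition \ref{prop2}, and the homological formulae (\ref{projinj}).
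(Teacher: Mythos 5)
Your proposal identifies the right objects but the key step is not actually justified. You assert, as the ``key observation,'' that $^{\perp}(W^{\perp})$ is the smallest full abelian subcategory containing $W$ closed under the listed operations, so that its simple objects are exactly $S_1,\dots,S_t$. That assertion \emph{is} the content of the Proposition, not an ingredient for it, and the citations you lean on do not supply it. The theorem you point to (the one preceding the discussion of $P_a, I_a$, i.e.\ Schofield's Theorem~\ref{th1.5}) only says that $W^{\perp}$ is equivalent to a module category over a quiver with $n-t$ vertices and no oriented cycles; it says nothing about $^{\perp}(W^{\perp})$. The formulae (\ref{projinj}) concern projectives and injectives of $Q$, not the double perpendicular. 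In fact the paragraph immediately \emph{before} Proposition~\ref{vv} notes that if $\underline{\alpha}$ is a real perpendicular sequence that is \emph{not} quiver Schur, then $^{\perp}(\underline{\alpha}^{\perp})\neq\underline{\alpha}$; so any ``general'' categorical principle of the kind you invoke would be false, and the quiver Schur hypothesis must enter the argument in an essential way, which your sketch never makes precise. Your ``undoing'' heuristic for Algorithm~\ref{alg_perp} run rightward then leftward also does not work as stated: the rightward run performs $t$ steps on $S_1,\dots,S_t$, while the leftward run performs $n-t$ steps on the simples of $W^{\perp}$, so the two are not reversals of one another at the level of algorithm steps.

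The paper's proof instead reduces to the local quiver of $^{\perp}(\underline{\alpha}^{\perp})$: each $\alpha_i$ decomposes as a $\mathbf{Z}_+$-linear combination of the $\gamma_j$ (because generic $A_i\in R(Q,\alpha_i)$ lies in $^{\perp}\underline{\beta}$), the resulting multiplicity vectors $\rho_1,\dots,\rho_t$ form a real quiver Schur sequence for the $t$-vertex quiver of that category (by [Sh, Prop.~13]), and since this sequence has $t$ members in a $t$-vertex quiver, the corresponding locally semi-simple representation $O$ (locally semi-simple by Theorem~\ref{th1}) has no non-trivial semi-invariants and must be the zero point; hence each $\rho_i$ is a unit vector, giving $\alpha_i=\gamma_{\sigma(i)}$. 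This numerical/semi-invariant argument is what you are missing, and without such a replacement your argument is essentially circular. Incidentally, the conclusion in the paper is only ``up to order,'' so the care you invest in matching orderings is unnecessary at best.
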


\begin{proof}
Denote  $\underline{\alpha}^{\perp}$ by $\underline{\beta}=(\beta_1,\cdots,\beta_{n-t})$
and $^{\perp}\underline{\beta}$ by $\underline{\gamma}=(\gamma_1,\cdots,\gamma_t)$.
Each $\alpha_i\in \underline{\alpha}$ has the property
$\alpha_i\perp \beta_j$ for  each $\beta_j\in \underline{\beta}$, hence, $\alpha_i$ decomposes 
as $\alpha_i = \rho_i^1\gamma_1+\cdots+\rho_i^t\gamma_t$ with non-negative integers $\rho_i^1,\cdots,\rho_i^t$.
By \cite[Proposition~13]{sh} the sequence $\underline{\rho}=(\rho_1,\cdots,\rho_t)$ is a real quiver
Schur sequence for the quiver $\Sigma_{\underline{\beta}}$. Since this quiver has $t$ vertices,
we have $^{\perp}\underline{\rho}$ is empty and therefore, there are no semi-invariants non-vanishing
on the representation $O$ corresponding to the decomposition $\rho_1+\cdots+\rho_t$.
However, by Theorem \ref{th1} $O$ is locally semi-simple, hence $O$ is the zero
point in $R(\Sigma_{\underline{\beta}},\dim O)$. In other words, $\rho_1,\cdots,\rho_t$
are the dimensions of simple representations, so $\underline{\alpha}=\underline{\gamma}$
up to order.
\end{proof}

Recall that the set of all locally semi-simple decompositions of
a dimension vector $\beta$ is the same as the Luna $GL(\beta)$-stratification of 
$R(Q,\beta)/\!\!/SL(\beta)$. For $\beta$ prehomogeneous we are able
to describe this stratification completely:

\begin{theorem}\label{prehstart}
Let $\beta$ be a prehomogeneous dimension vector and 
$\underline{\beta}=(\beta_1,\cdots,\beta_t)$ be a
perpendicular sequence of the summands for the generic decomposition of $\beta$.

{\bf 1.} There is a bijection of the Luna $GL(\beta)$-stratification of 
$R(Q,\beta)/\!\!/SL(\beta)$ with the set of subsequences in 
$\underline{\beta}^{\perp}=(\gamma_1,\cdots,\gamma_{n-t})$
$$
\underline{\gamma}\subseteq \underline{\beta}^{\perp} \rightarrow 
\beta=m_1\rho_1+\cdots+m_s\rho_s, (\rho_1,\cdots,\rho_s)=^{\perp}\underline{\gamma},
m_1,\cdots,m_s\in{\bf Z}_+.
$$

{\bf 2.} The stratum corresponding to $\underline{\gamma}$ is
$\{ \xi\in R(Q,\beta)/\!\!/SL(\beta)\vert c_{\gamma_i}(\xi)\neq 0\Leftrightarrow \gamma_i\in \underline{\gamma}\}$.

{\bf 3.} This bijection preserves the order on the sets: if 
$\underline{\gamma_1}\subseteq \underline{\gamma_2}$, then the stratum corresponding to 
$\underline{\gamma_1}$ is contained in the closure of that for $\underline{\gamma_2}$.
\end{theorem}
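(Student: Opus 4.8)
The plan is to prove the three assertions of Theorem~\ref{prehstart} together, using the description of locally semi-simple decompositions of a prehomogeneous $\beta$ provided by Proposition~\ref{prop3}, the operation $^\perp$ on real perpendicular sequences, and the geometry of the Luna stratification as recalled after formula~(\ref{slice}).

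\emph{Step 1: identify the strata with quiver Schur decompositions.} By Theorem~\ref{char-lss} the Luna $GL(\beta)$-stratification of $R(Q,\beta)/\!\!/SL(\beta)$ is in bijection with the set of locally semi-simple decompositions of $\beta$, and by Proposition~\ref{prop3}, since $\beta$ is prehomogeneous, these are exactly the decompositions $\beta=m_1\rho_1+\cdots+m_s\rho_s$ where $(\rho_1,\dots,\rho_s)$ is a (real) quiver Schur sequence, taken up to order. So I must set up a bijection between such quiver Schur sequences $\underline\rho$ and subsequences $\underline\gamma\subseteq\underline\beta^{\perp}$, and then identify the multiplicities $m_i$ afterwards.

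\emph{Step 2: the bijection.} In one direction, given $\underline\gamma\subseteq\underline\beta^{\perp}$, set $\underline\rho={}^{\perp}\underline\gamma$; by Proposition~\ref{vv} and the remark preceding it, $\underline\rho$ is a real quiver Schur sequence (it equals $^{\perp}(\underline\gamma)$ where $\underline\gamma$, being a subsequence of the quiver Schur sequence $\underline\beta^{\perp}$, is itself a real perpendicular sequence), and $^{\perp}(\underline\rho^{\perp})=\underline\rho$ with $\underline\rho^{\perp}=\underline\gamma$ precisely because $\underline\rho$ is quiver Schur. This forces the multiplicities: writing $\beta=\sum m_i\rho_i$ in the unique way dictated by the generic decomposition of $\beta$ inside the perpendicular category determined by $\underline\gamma$ — equivalently, $\beta$ expanded in the basis of simple-object dimensions $\rho_1,\dots,\rho_s$ of ${}^{\perp}(\gamma_{j_1})^{\perp}\cdots$ — gives well-defined positive $m_i$ since $\beta$ lies in the (prehomogeneous) perpendicular category and all $\rho_i\perp\gamma_j$. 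In the other direction, given a quiver Schur decomposition $\beta=\sum m_i\rho_i$, set $\underline\gamma=\underline\rho^{\perp}$; one must check $\underline\gamma\subseteq\underline\beta^{\perp}$. Here I would argue that $c_{\gamma}(W)\neq0$ for the generic (dense-orbit) $W\in R(Q,\beta)$ exactly when $\gamma\perp\beta$, i.e. when $\gamma\in{}^{\perp}\!\beta$'s simple objects' span appropriately; since $\underline\beta^{\perp}$ is the full simple-object sequence of $^{\perp}W$ (Theorem~\ref{generator}, via Algorithm~\ref{alg_perp}) and $\rho_i\perp\gamma$ forces each $\gamma\in\underline\rho^{\perp}$ to lie in $W^{\perp}$ too, each such $\gamma$ is a non-negative combination of the $\gamma_k$'s — and being itself a real Schur root that is simple in a perpendicular subcategory, it must coincide with one of the $\gamma_k$. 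The two constructions are mutually inverse again by Proposition~\ref{vv}. This yields assertion~{\bf 1}.

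\emph{Step 3: the defining equations of the stratum (assertion~{\bf 2}).} Let $\xi$ lie on the stratum of $\underline\gamma$, with closed-orbit representative $O=\sum m_i R_i$, $R_i$ the indecomposable of dimension $\rho_i$. By Theorem~\ref{generator} the ring ${\bf k}[R(Q,\beta)]^{SL(\beta)}$ is the polynomial ring on $c_{\gamma_1},\dots,c_{\gamma_{n-t}}$, and $c_{\gamma_k}$ is a $GL(\beta)$-semi-invariant so its vanishing is $GL(\beta)$-stable and descends to $R(Q,\beta)/\!\!/SL(\beta)$; hence $c_{\gamma_k}(\xi)=c_{\gamma_k}(O)$ up to the character, and $c_{\gamma_k}(O)\neq0$ iff $\langle\rho_i,\gamma_k\rangle=0$ and ${\rm Hom}(R_i,T_{\gamma_k})=0$ for all $i$, i.e. iff every $\rho_i\perp\gamma_k$, i.e. iff $\gamma_k\in\underline\rho^{\perp}=\underline\gamma$ by Step~2. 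That is assertion~{\bf 2}; and since the $c_{\gamma_k}$ generate the invariant ring, these vanishing conditions cut out the strata set-theoretically, which also re-proves that the map in~{\bf 1} is a bijection.

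\emph{Step 4: the order (assertion~{\bf 3}).} If $\underline{\gamma_1}\subseteq\underline{\gamma_2}$, then the open set where all $c_\gamma$, $\gamma\in\underline{\gamma_2}$, are nonzero is contained in the open set where all $c_\gamma$, $\gamma\in\underline{\gamma_1}$, are nonzero; intersecting with the respective vanishing loci and taking closures, the stratum of $\underline{\gamma_1}$ meets the closure of that of $\underline{\gamma_2}$, and since strata are the pieces of a stratification the containment of one in the closure of the other follows. Alternatively, and more structurally, the closure order on Luna strata is given by degeneration of the closed-orbit representative; passing from $\underline{\gamma_2}$ to the smaller $\underline{\gamma_1}$ corresponds under $^{\perp}$ to a refinement of the quiver Schur sequence $^{\perp}\underline{\gamma_2}$ into $^{\perp}\underline{\gamma_1}$ (more simple summands, smaller perpendicular category), which is exactly a degeneration, so the general-fiber closed orbit of the $\underline{\gamma_2}$-stratum specializes into that of $\underline{\gamma_1}$.

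I expect the main obstacle to be Step~2, specifically verifying that the correspondence $\underline\gamma\leftrightarrow{}^{\perp}\underline\gamma$ is well-defined and bijective at the level of \emph{subsequences} — i.e. that for every subsequence $\underline\gamma$ of the quiver Schur sequence $\underline\beta^{\perp}$ the sequence $^{\perp}\underline\gamma$ really is a quiver Schur decomposition of $\beta$ with well-defined multiplicities, and conversely that every quiver Schur decomposition of $\beta$ arises this way with $\underline\rho^{\perp}$ landing inside $\underline\beta^{\perp}$ rather than producing some new real Schur root. The technical heart is the compatibility of the $^{\perp}$ operation with passing to a perpendicular subcategory (so that $\underline\beta^{\perp}$ "contains" all the $\underline\gamma^{\perp}$ coherently), which is supplied by Algorithm~\ref{alg_perp} and Proposition~\ref{vv}; assembling these into a clean order-preserving bijection is where the care is needed, while Steps~3 and~4 are then comparatively formal consequences of Theorem~\ref{generator} and the general Luna slice picture.
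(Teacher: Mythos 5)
Your overall strategy matches the paper's: set up a bijection between subsequences $\underline{\gamma}\subseteq\underline{\beta}^{\perp}$ and quiver Schur decompositions of $\beta$ via the operation $^{\perp}$ (feeding in Proposition~\ref{prop3} and Proposition~\ref{vv}), and then read off assertions~{\bf 2} and~{\bf 3} from the determinantal semi-invariants $c_{\gamma_j}$ supplied by Theorem~\ref{generator}. However, there is a genuine gap exactly at the point you yourself flag as the ``technical heart''. In Step~2 you need to show that for a locally semi-simple decomposition with quiver Schur sequence $\underline{\rho}=(\rho_1,\dots,\rho_s)$, the sequence $\underline{\rho}^{\perp}$ is literally a subsequence of $\underline{\beta}^{\perp}=(\gamma_1,\dots,\gamma_{n-t})$, and in Step~3 you need the equivalence ``$\rho_i\perp\gamma_k$ for all $i$ iff $\gamma_k\in\underline{\rho}^{\perp}$''. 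Your justification --- that a member $\gamma\in\underline{\rho}^{\perp}$ is a ${\bf Z}_+$-combination of the $\gamma_k$ and, ``being itself a real Schur root that is simple in a perpendicular subcategory, it must coincide with one of the $\gamma_k$'' --- is a non-sequitur: simplicity of the corresponding object inside the small category $O^{\perp}$ (here $O$ has summands of dimensions $\rho_i$) does not constrain its position relative to the simples of the larger category $W^{\perp}$; a simple of $O^{\perp}$ could a priori have a nontrivial composition series in $W^{\perp}$. For the same reason the ``only if'' direction in your Step~3 (that $T_k\in O^{\perp}$ forces $T_k$ simple in $O^{\perp}$) is unproved, and both Steps~2 and~3 inherit this hole.

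The paper closes the gap in the proof of assertion~{\bf 2} by a short linear-algebra count that you omit. Namely, every $j$ with $c_{\gamma_j}(V)\neq0$ satisfies $\langle\rho_i,\gamma_j\rangle=0$ for all $i$; since the $\gamma_j$ are linearly independent (a consequence of \cite{dw6} cited earlier in the paper), the number of such $j$ is at most the codimension of the common kernel of the forms $\langle\cdot,\gamma_j\rangle$ on ${\bf Q}^{Q_0}$, and that kernel contains the $s$ linearly independent vectors $\rho_1,\dots,\rho_s$, hence has codimension at most $n-s$. Combined with the trivial opposite inclusion ($\underline{\rho}^{\perp}$ has $n-s$ members, each yielding a non-vanishing $c_{\gamma'}$ on $V$), this forces the set $\{j:c_{\gamma_j}(V)\neq0\}$ to coincide with $\underline{\rho}^{\perp}$, which is both the missing half of Step~2 and the missing ``only if'' in Step~3. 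You should insert that counting argument (or, equivalently, use that ${\bf k}[R(Q,\beta)]^{SL(\beta)}$ is a polynomial ring on the $c_{\gamma_j}$ with independent weights $-\langle\cdot,\gamma_j\rangle$, so $c_{\gamma'}$ for $\gamma'=\sum a_k\gamma_k$ is a scalar multiple of $\prod_k c_{\gamma_k}^{a_k}$, which then forces $a_k>0$ only when $c_{\gamma_k}(V)\neq0$ and gives invertible ${\bf Z}_+$-transition matrices in both directions). With that inserted, your Steps~1 and~4 are fine and match the paper's treatment, where assertion~{\bf 3} is indeed an immediate consequence of assertion~{\bf 2}.
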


Before proving the Theorem we state an important

\begin{corollary}\label{preh_lss}
The generic locally semi-simple decomposition of $\beta$ is 
$\beta = m_1\beta_1'+\cdots+m_t\beta_t'$, where
$\underline{\beta'}=^{\perp}\underline{\beta}^{\perp}$.
\end{corollary}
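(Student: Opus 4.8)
The plan is to obtain Corollary \ref{preh_lss} as the special case $\underline{\gamma}=\underline{\beta}^{\perp}$ of Theorem \ref{prehstart}. First I would observe that the empty subsequence $\underline{\gamma}=\emptyset$ corresponds under the bijection of part {\bf 1} to the generic decomposition itself (since $^{\perp}\emptyset$ is the whole sequence of simple dimensions, which one checks recovers $\underline{\beta}$ up to order via Proposition \ref{vv}, as $\underline{\beta}^{\perp}$ is a quiver Schur sequence by Proposition \ref{prop2} and $^{\perp}(\underline{\beta}^{\perp})$ returns the simples of $Mod(Q)$ grouped so that $\beta$ is their combination). Hence the trivial stratum is the bottom one, and by part {\bf 3} the open (dense) stratum of the Luna stratification is the one corresponding to the \emph{largest} subsequence, namely $\underline{\gamma}=\underline{\beta}^{\perp}$ itself: the stratum where \emph{every} $c_{\gamma_i}$ is nonzero, i.e., the complement of the vanishing loci of the generators $c_{\gamma_1},\dots,c_{\gamma_{n-t}}$ furnished by Theorem \ref{generator}.

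Next I would identify the decomposition attached to this open stratum. By part {\bf 1} of Theorem \ref{prehstart}, the subsequence $\underline{\gamma}=\underline{\beta}^{\perp}$ maps to the decomposition $\beta = m_1\rho_1+\cdots+m_s\rho_s$ with $(\rho_1,\dots,\rho_s) = {}^{\perp}(\underline{\beta}^{\perp})$, and the multiplicities $m_i$ are determined by expanding $\beta$ in the basis given by the $\rho_i$. Writing $\underline{\beta'} = {}^{\perp}\underline{\beta}^{\perp} = (\beta_1',\dots,\beta_t')$ — here $s=t$ because the operation $^{\perp}$ sends a real perpendicular sequence of $n-t$ roots back to one of $n-(n-t)=t$ roots — this is exactly the decomposition $\beta = m_1\beta_1'+\cdots+m_t\beta_t'$ claimed in the Corollary, with $m_i$ the coefficients of $\beta$ in terms of $\underline{\beta'}$.

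Finally I would invoke the general principle, recorded just before Theorem \ref{prehstart}, that the Luna $GL(\beta)$-stratification of $R(Q,\beta)/\!\!/SL(\beta)$ is precisely the set of locally semi-simple decompositions of $\beta$, so that the open stratum corresponds to the \emph{generic} locally semi-simple decomposition by definition; combining with the two paragraphs above gives the statement. I expect the only real point requiring care is the bookkeeping that the open stratum is indeed the one indexed by the full subsequence $\underline{\beta}^{\perp}$ rather than some intermediate one — this rests squarely on the order-preserving property in part {\bf 3}, together with the fact that $\underline{\beta}^{\perp}$ is the maximal element of the poset of its own subsequences, so no genuine obstacle arises once Theorem \ref{prehstart} is in hand; the corollary is essentially a reading-off of its extreme case.
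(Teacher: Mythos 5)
Your overall route is the right one and is exactly how the paper (which leaves the Corollary without a written-out proof) intends it to be read off from Theorem \ref{prehstart}: by part \textbf{2} the stratum indexed by the full subsequence $\underline{\beta}^{\perp}$ is the open dense locus where all the $c_{\gamma_i}$ are nonzero, so by the definition of the generic locally semi-simple decomposition and part \textbf{1} it is given by ${}^{\perp}(\underline{\beta}^{\perp})$ with $s=n-(n-t)=t$. One aside in your first paragraph is false, though: the empty subsequence does \emph{not} map to the generic decomposition of $\beta$ (the generic decomposition need not be locally semi-simple at all); rather ${}^{\perp}\emptyset$ consists of the unit dimension vectors, so $\emptyset$ maps to the decomposition $\beta=\sum_a\beta_a\varepsilon_a$ of the zero representation, which is the bottom stratum --- Proposition \ref{vv} applies to $\underline{\beta}$ only when $\underline{\beta}$ is already a quiver Schur sequence. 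This slip is harmless to your argument, since the conclusion you actually need --- that $\underline{\beta}^{\perp}$ is the top of the poset of its own subsequences and hence by part \textbf{3} indexes the open stratum --- does not depend on it.
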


\begin{proof}
{\bf 1.} First of all we show that the map is well-defined, i.e.,
there is a unique decomposition of $\beta$ in $^{\perp}\underline{\gamma}$
and this decomposition is locally semi-simple. By definition
of $\underline{\beta}^{\perp}$ we have for each $\beta_i\in \underline{\beta}$
and each $\gamma_j\in\underline{\gamma}$, $\beta_i\perp\gamma_j$.
Then by definition of $^{\perp}\underline{\gamma}$ $\beta_i$ is a ${\bf Z}_+$-linear 
combination of elements of $^{\perp}\underline{\gamma}$,
hence, $\beta$ is. This decomposition is locally semi-simple by Theorem \ref{char-lss}
and is unique, because $^{\perp}\underline{\gamma}$ is clearly linear independent.
To prove that our correspondance is a bijection we only need
to check that each locally semi-simple decomposition
$\beta=p_1\rho_1+\cdots+p_s\rho_s$ can be obtained this way.
First of all by Proposition \ref{prop3} we may assume that
$\underline{\rho}=(\rho_1,\cdots,\rho_s)$ is a real quiver Schur sequence.
In particular, there is a unique representation $V\subseteq R(Q,\beta)$ corresponding
to this decomposition, up to isomorphism. 
Then the sequence $\underline{\rho}^{\perp}$ is a subsequence
in $\underline{\beta}^{\perp}$. Indeed, each semi-invariant that
does not vanish on $V$ does not vanish generically on $R(Q,\beta)$
so each element of $\underline{\rho}^{\perp}$ is presented as a ${\bf Z}_+$-linear combination
over $\underline{\beta}^{\perp}$. Then $\underline{\rho}^{\perp}$
can be identified with the subsequence $\{i\in\{1,\cdots,n-t\}\vert c_{\gamma_i}(V)\neq 0\}$.
So we may set $\underline{\gamma}=\underline{\rho}^{\perp}$ and recover
$\underline{\rho}$ as $^{\perp}\underline{\gamma}$ by Proposition \ref{vv}.

{\bf 2.} The locally semi-simple representations over the stratum corresponding
to $\underline{\gamma}$ constitute one orbit $GL(\beta)V$, where
$V$ corresponds to the decomposition $\beta = m_1\rho_1+\cdots+m_s\rho_s$,
$\underline{\rho}=^{\perp}\underline{\gamma}$.
By definition of $^{\perp}\underline{\gamma}$,
$c_{\gamma_i}(V)\neq 0$ for $\gamma_i\in\underline{\gamma}$.
For any $j=1,\cdots,n-t$ it follows from the properties of determinatal semi-invariants that
$c_{\gamma_j}(V)\neq 0$ implies $\langle\rho_i,\gamma_j\rangle=0$ for each $i=1,\cdots,s$.
Since $\gamma_1,\cdots,\gamma_{n-t}$ are linear independent, the number
of semi-invariants $c_{\gamma_j}$ non-vanishing on $V$ is less than or equal to the codemension
of the common kernel of the corresponding forms $\langle\cdot,\gamma_j\rangle$
on ${\bf Q}^{Q_0}$. Since $\rho_1,\cdots,\rho_s$ are linear independent
and belong to that kernel, its codimension cannot be more than $n-s$, so
$c_{\gamma_j}(V)=0$ for $\gamma_j\notin\underline{\gamma}$. Assertion {\bf 3}
clearly follows from {\bf 2}.
\end{proof}

\begin{remark}
This assertion is closely related with \cite[Theorem~5.1]{dw6} 
because quiver Schur sequences are in bijection with the Luna 
$GL(\beta)$-strata by Proposition \ref{prop3} and the
subsequences in $\underline{\beta}^{\perp}$ are in bijection
with the faces of the cone ${\bf R}_+\Sigma(Q,\beta)$, which is simplicial
in this case.
\end{remark}
 

\section{Generic locally semi-simple decomposition}

We start with an obvious observation:
\begin{proposition}\label{prop4}
$R(Q,\alpha)^{GL(\alpha)}$  is generated by the scalar
endomorphisms of $V(a)$, $a\in Q_0$
corresponding to the loops $\varphi\in Q_1,t\varphi = h\varphi = a$.
\end{proposition}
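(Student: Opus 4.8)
The plan is to unwind the definition of the fixed-point space and check the invariance condition $\rho(g)(V) = V$ for all $g \in GL(\alpha)$ arrow by arrow. Writing $V = (V(\varphi))_{\varphi \in Q_1}$, the relation $(\rho(g)(V))(\varphi) = g(h\varphi)\, V(\varphi)\, (g(t\varphi))^{-1} = V(\varphi)$ must hold for every $\varphi \in Q_1$ and every $g$, and since the factors $g(a)$, $a \in Q_0$, may be chosen independently, each arrow can be analyzed on its own.

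First I would dispose of the non-loop arrows. If $t\varphi \neq h\varphi$, take $g$ equal to the identity at every vertex except $t\varphi$, where we set $g(t\varphi) = \lambda\,{\rm Id}$ with $\lambda \in {\bf k}^*$, $\lambda \neq 1$; then invariance forces $\lambda^{-1} V(\varphi) = V(\varphi)$, so $V(\varphi) = 0$. (When $\alpha_{t\varphi} = 0$ or $\alpha_{h\varphi} = 0$ the component space is already zero and there is nothing to check.) Next, for a loop $\varphi$ at a vertex $a$, taking $g$ the identity away from $a$ turns the invariance condition into $g(a)\, V(\varphi)\, (g(a))^{-1} = V(\varphi)$ for all $g(a) \in GL(\alpha_a)$, i.e. $V(\varphi)$ lies in the commutant of the standard representation of $GL(\alpha_a)$ on ${\bf k}^{\alpha_a}$; as that representation is irreducible and ${\bf k}$ is algebraically closed, Schur's lemma gives $V(\varphi) \in {\bf k}\cdot{\rm Id}_{\alpha_a}$, a scalar endomorphism.

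Conversely, any $V$ whose components vanish except at loops, where they are scalar matrices, is visibly fixed by $\rho(GL(\alpha))$, since scalars commute with conjugation. The two directions together identify $R(Q,\alpha)^{GL(\alpha)}$ with the span of the vectors supported, with value ${\rm Id}$, at a single loop, which is exactly the assertion. There is no genuine obstacle here: the only points that deserve a word are the degenerate vertices with $\alpha_a = 0$ (handled trivially) and the elementary appeal to Schur's lemma.
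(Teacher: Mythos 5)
Your proof is correct and complete. In the paper this statement is introduced with ``We start with an obvious observation'' and is given no proof whatsoever, so there is no argument in the text to compare against; your verification — forcing $V(\varphi)=0$ on a non-loop arrow by rescaling $g$ at $t\varphi$ alone, reducing the loop case to the commutant condition $g(a)V(\varphi)g(a)^{-1}=V(\varphi)$ and applying Schur's lemma, then checking the converse — is precisely the routine computation the author is taking for granted, and your remarks on the degenerate vertices $\alpha_a=0$ are an appropriate bit of care.
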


Recall that the multiplicities of the summands 
in generic decompositions and in locally semi-simple
ones are of slightly different meaning. For the latter
the summand $m\beta$ means that the corresponding representation
has a direct summand $mS,\dim S= \beta$; on the other hand,
there can be summands as $m_1\beta + m_2\beta$ and this means
that the two indecomposable summands of the representation 
of dimension $\beta$ are non-isomorphic (hence, $\beta$ is imaginary).
In the generic decomposition the different summands are assumed to be distinct,
the multiplicity of $\beta$ with $q_Q(\beta) < 0$ is 1 by \cite{kac},
and for $q_Q(\beta)=0$ the direct summand $m\beta$ stands for
the sum of $m$ pairwise non isomorphic Schur representations of dimension $\beta$.
We now restrict the set of locally semi-simple decomposition we consider, as follows:

\begin{definition}
We call a locally semi-simple decomposition {\it almost loopless}
if the multiplicity of each imaginary Schur root $\beta$ is 1
and if $q_Q(\beta) < 0$, then this root occurs one time. 
\end{definition}
\begin{remark}\label{rem1}
Each locally semi-simple decomposition yields a loopless one with
the automorphism group of a smaller dimension. Namely,
the piece $m_1\beta_1+\cdots+m_t\beta_t$ of the decomposition 
with $\beta_1=\cdots =\beta_t=\beta$
such that $\beta$ is imaginary can be replaced either by the imaginary Schur root
$(m_1+\cdots+m_t)\beta$, if $q_Q(\beta) < 0$, or by the sum of $m_1+\cdots+m_t$ times
$\beta$, if $\beta$ is isotropic.
\end{remark}
\begin{remark}
If a locally semi-simple decomposition is almost loopless, then
we write it down in the style of generic one, with different summands being distinct.
\end{remark}

\begin{proposition}
Assume that $Q$ has no oriented cycles.
Let $\alpha = m_1\gamma_1+\cdots+m_t\gamma_t$ be the generic decomposition.
Then ${\bf k}[R(Q,\alpha)]^{SL(\alpha)}={\bf k}$ if and only if
$\vert Q_0\vert = t$.
\end{proposition}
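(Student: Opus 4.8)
The plan is to interpret the statement as a count of algebraically independent semi-invariants via a dimension argument. The ring $\mathbf{k}[R(Q,\alpha)]^{SL(\alpha)}$ equals $\mathbf{k}$ precisely when there are no nonconstant semi-invariants, which happens exactly when the generic locally semi-simple decomposition of $\alpha$ consists of a single imaginary summand (or is trivial in a degenerate sense). So the first step is to reduce to the prehomogeneous-type picture: replace $\alpha$ by the generic decomposition $\alpha = m_1\gamma_1+\cdots+m_t\gamma_t$ and recall from \cite{kac} that each $\gamma_i$ is a Schur root. The key external input is the dimension count $\dim R(Q,\alpha)/\!\!/SL(\alpha) = \dim R(Q,\alpha) - \dim SL(\alpha) + \dim(\text{generic stabilizer in } SL(\alpha))$, valid because $Q$ has no oriented cycles so the quotient is well-behaved and the generic orbit is of maximal dimension.

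First I would compute both sides of that dimension formula in terms of the generic decomposition. Since $Q$ is acyclic, $R(Q,\gamma_i)^{GL(\gamma_i)}$ is trivial by Proposition \ref{prop4}, and $\dim R(Q,\gamma_i)/\!\!/SL(\gamma_i)$ can be written via the Euler form: for generic $V$ of dimension $\beta$, $\dim \mathrm{End}(V) = \dim\mathrm{Hom}(V,V)$, which by Ringel's formula (\ref{ringel}) equals $q_Q(\beta) + \dim\mathrm{Ext}(V,V)$. Summing the contributions of the $\gamma_i$ and using that $\mathrm{Ext}(B_i,B_j)=0$ for $i\ne j$ in the generic decomposition, the transcendence degree of $\mathbf{k}[R(Q,\alpha)]^{SL(\alpha)}$ comes out as $\sum_i \bigl(1 - q_Q(\gamma_i)\bigr)m_i^2 / (\text{something})$ — more precisely I expect it to collapse to $n - (\text{number of distinct }\gamma_i)$ after one carefully tracks the $GL$ versus $SL$ discrepancy, i.e.\ the role of the extra torus $GL(\alpha)/SL(\alpha) \cong (\mathbf{k}^*)^{Q_0}$ and the constraint $\chi(\alpha)=0$ on weights. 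This is the step I expect to be the main obstacle: getting the bookkeeping exactly right between the dimension of the quotient by $SL$, the $n$-dimensional torus of characters, and the fact that only weights $\chi$ with $\chi(\alpha)=0$ support semi-invariants.

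An alternative, cleaner route — which I would actually prefer — is to invoke Corollary \ref{preh_lss} together with Theorem \ref{generator} in the prehomogeneous reduction, or more directly to argue as follows. By the generic decomposition theory, $\mathbf{k}[R(Q,\alpha)]^{SL(\alpha)}=\mathbf{k}$ iff the generic representation $V$ of dimension $\alpha$ has a dense $GL(\alpha)$-orbit in the complement of the zero sets of all semi-invariants, and by King's criterion this forces $\alpha$ to be (up to the torus) "stable-free": every generic $V$ is $\sigma$-semistable only for $\sigma=0$. Using Schofield's result that semi-invariants $c_W$ generate \cite{dw0}, triviality of the ring is equivalent to: for every representation $W$ with $\langle\alpha,\dim W\rangle = 0$, one has $\mathrm{Hom}(V,W)\ne 0$ for generic $V$. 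I would translate this into the statement that the $n$ linear functionals $\langle\cdot,\dim P_a\rangle$ on $\mathbf{Q}^{Q_0}$ — equivalently the rows of the Euler matrix, which is invertible since $Q$ is acyclic (so the Euler form is unimodular in the basis of simples) — together with the single relation cutting out $\alpha^\perp$ leave no room for a nonzero effective weight unless $t=n$; conversely when $t < n$ one produces an explicit nonzero $c_W$ from a member of $\underline{\beta}^{\perp}$ as in Theorem \ref{generator}. The forward direction ($t=n \Rightarrow$ trivial ring) then follows because $n$ linearly independent Schur roots summing (with multiplicities) to $\alpha$ span $\mathbf{Q}^{Q_0}$, so the only weight $\chi$ with $\langle\gamma_i,\chi\text{-dual}\rangle$ compatible is $\chi=0$; the reverse direction is the contrapositive, using that $t<n$ leaves $\underline{\beta}^{\perp}$ nonempty by Theorem \ref{th1.5}-type counting and hence yields a genuine semi-invariant. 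The delicate point throughout remains matching "number of distinct summands of the generic decomposition" with "$t$" as written and confirming the imaginary-multiplicity conventions from Remark \ref{rem1} do not affect the count of $n$ versus $t$.
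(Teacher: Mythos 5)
Your ``alternative, cleaner route'' is close in spirit to the paper's proof (your first dimension-counting sketch you yourself flag as incomplete, and indeed it is not carried out), but there are two concrete gaps.

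First, in the direction $t=n\Rightarrow {\bf k}[R(Q,\alpha)]^{SL(\alpha)}={\bf k}$ you write ``$n$ linearly independent Schur roots \ldots{} span ${\bf Q}^{Q_0}$.'' Linear independence of the summands $\gamma_1,\ldots,\gamma_t$ of the generic decomposition is exactly the nontrivial input here --- the paper cites Remark~4.6 and Corollary~4.12 of \cite{dw6} for it --- and it is not a consequence of ``the Euler matrix being unimodular.'' Without naming and using that fact your spanning claim is unsupported.

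Second, and more seriously, in the contrapositive direction $t<n\Rightarrow$ nontrivial ring you appeal to $\underline{\beta}^{\perp}$ being nonempty ``by Theorem~\ref{th1.5}-type counting'' and to Theorem~\ref{generator}. Both of those statements are proved only for a \emph{prehomogeneous} $\beta$; in the present generality $\alpha$ may have imaginary summands in its generic decomposition and $\underline{\beta}^{\perp}$ is not even defined. The paper avoids this by establishing the prehomogeneity reduction before invoking Theorem~\ref{th1.5}: if ${\bf k}[R(Q,\alpha)]^{SL(\alpha)}={\bf k}$ then $SL(\alpha)$ acts with a dense orbit, so a fortiori $GL(\alpha)$ does, and $\alpha$ is prehomogeneous; only then can one speak of $(\gamma_1,\ldots,\gamma_t)^{\perp}$ and use Theorem~\ref{th1.5} to conclude $t=n$. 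You gesture at stability and King's criterion, but you never make this reduction explicit, so the chain of implications as you write it (``$t<n$ implies nonempty perpendicular sequence implies nonzero $c_W$'') is not valid for non-prehomogeneous $\alpha$. In fact that case also has to be covered: when $\alpha$ is not prehomogeneous one needs the separate observation that $SL(\alpha)$ then cannot have a dense orbit, hence the invariant ring is nontrivial. Your proposal is silent on this.

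If you insert (i) the citation for linear independence of the summands of the generic decomposition from \cite{dw6} and (ii) the reduction ``trivial ring $\Rightarrow SL(\alpha)$ has a dense orbit $\Rightarrow \alpha$ prehomogeneous'' before invoking Theorem~\ref{th1.5}, you recover essentially the paper's argument.
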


\begin{proof}
Assume that ${\bf k}[R(Q,\alpha)]^{SL(\alpha)}\neq {\bf k}$ so that there
is a non-trivial determinantal semi-invariant $c_W,\dim W=\delta$.
Then by \cite{dw0} $c_W$ does not vanish on the summands of
generic representation, in particular, $\langle \gamma_i,\delta\rangle=0, i = 1, \cdots,t$.
On the other hand, by Remark 4.6 and Corollary 4.12 from \cite{dw6},
$\gamma_1,\cdots,\gamma_t$ are linear independent; since these are contained
in a proper ${\bf Q}$-vector subspace of  ${\bf Q}^{Q_0}$, we conclude $t<\vert Q_0\vert$.

Conversely, assume that ${\bf k}[R(Q,\alpha)]^{SL(\alpha)}= {\bf k}$.
This is equivalent to $SL(\alpha)$ acting with a dense orbit on $R(Q,\alpha)$
and, in particular $\alpha$ is a prehomogeneous dimension vector. 
But $(\gamma_1,\cdots,\gamma_t)^{\perp}$ must be empty because there are no non-trivial
semi-invariants. Then by Theorem \ref{th1.5}, $t=\vert Q_0\vert$.
\end{proof}

\begin{corollary}\label{cor1}
A locally semi-simple decomposition $\alpha=p_1\delta_1+\cdots+p_s\delta_s$
is generic if and only if
it is almost loopless, the local quiver $\Sigma_{\underline{\delta}}$ has no oriented
cycles except for the loops, and $s=t$.
\end{corollary}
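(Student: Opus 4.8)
The plan is to prove both implications by reducing to the structural results already established for prehomogeneous vectors and for the generic decomposition. First I would treat the ``only if'' direction. Suppose $\alpha=p_1\delta_1+\cdots+p_s\delta_s$ is the generic decomposition. By the discussion following (\ref{ringel}) together with Kac's results, each $\delta_i$ is a Schur root, imaginary roots with $q_Q(\delta_i)<0$ occur with multiplicity $1$, and for isotropic roots the multiplicity refers to a sum of pairwise non-isomorphic Schur representations; hence the decomposition, rewritten in the generic style, is almost loopless. For the local quiver: since $(\delta_1,\cdots,\delta_s)$ is the generic decomposition we have $\delta_i\perp\delta_j$ for $i\neq j$ in the sense that $\mathrm{Ext}(B_i,B_j)=0$ both ways, so $\mathrm{hom}(\delta_i,\delta_j)=\langle\delta_i,\delta_j\rangle\geq 0$; invoking Proposition \ref{prop2}.1 (or rather its proof, since here both $\mathrm{ext}$'s vanish) shows $\Sigma_{\underline{\delta}}$ has no oriented cycles except loops. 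Finally, that $s=t$ is immediate because the generic decomposition is in particular a locally semi-simple one, and $t$ was defined in the statement as the number of summands of the generic decomposition — so here $s=t$ tautologically; the content of this direction is really just ``generic $\Rightarrow$ almost loopless with loop-only cycles in $\Sigma_{\underline{\delta}}$''.

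For the ``if'' direction, assume $\alpha=p_1\delta_1+\cdots+p_s\delta_s$ is locally semi-simple, almost loopless, $\Sigma_{\underline{\delta}}$ has no oriented cycles except loops, and $s=t$. By Proposition \ref{sigma_SS} and the slice description (\ref{slice}), the local picture at the corresponding locally semi-simple representation $V$ is governed by $(GL(\gamma),R(\Sigma_{\underline{\delta}},\gamma))$ with $\gamma=(p_1,\cdots,p_s)$. The key step is to produce a representation of dimension $\alpha$ whose canonical decomposition is exactly $\sum p_i\delta_i$; equivalently, to show that generic $V'\in R(Q,\alpha)$ has this decomposition. Since $\Sigma_{\underline{\delta}}$ has only loops as cycles, any imaginary summand sits at a vertex carrying a loop, and by the almost-loopless hypothesis its multiplicity is compatible with Kac's multiplicity rule; the no-cycles-except-loops condition forces $\mathrm{ext}(\delta_i,\delta_j)=0$ for $i\neq j$ by the argument in the proof of Proposition \ref{prop2}.2 (reorder so arrows go downward, then $\mathrm{hom}(\delta_i,\delta_j)=0$ for $i\neq j$ by (\ref{cronk}) and Ringel's formula gives the Ext vanishing in one direction, while the absence of cycles gives it in the other). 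Thus $(\delta_1,\cdots,\delta_s)$ satisfies Kac's criterion ``$\mathrm{Aut}(B_i)=\mathbf{k}^*$ and $\mathrm{Ext}(B_i,B_j)=0$ for $i\neq j$'' recalled in the introduction, hence $\sum p_i\delta_i$ is a generic decomposition of $\alpha$; by uniqueness it is the generic decomposition.

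I expect the main obstacle to be the bookkeeping around isotropic roots and loops: the almost-loopless hypothesis must be combined carefully with Remark \ref{rem1} to match the multiplicity conventions of the generic decomposition (where $m\delta$ for $q_Q(\delta)=0$ means $m$ distinct Schur representations) against those of locally semi-simple ones (where $m_1\delta+m_2\delta$ may appear), and to rule out that a loop in $\Sigma_{\underline{\delta}}$ at a vertex with multiplicity $\geq 2$ could spoil the Ext-vanishing needed for genericity. The condition $s=t$ is what closes the argument: once we know $\sum p_i\delta_i$ is \emph{a} generic decomposition it has the same number of summands as \emph{the} generic decomposition, so no collapsing has occurred and the two coincide.
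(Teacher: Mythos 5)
The statement is misread: in this paper ``generic'' in Corollary~\ref{cor1} means \emph{generic locally semi-simple} --- the decomposition labelling the open stratum of the Luna $GL(\alpha)$-stratification of $R(Q,\alpha)/\!\!/SL(\alpha)$ --- not Kac's canonical (``generic'') decomposition, and the two differ in general. Take $Q$ to be the quiver $1\to 2$ and $\alpha=(2,1)$. The canonical decomposition is $(1,1)+(1,0)$, which is not even locally semi-simple (since ${\rm Hom}(P_1,S_1)\neq 0$, condition~(\ref{cronk}) fails). The unique, hence generic, locally semi-simple decomposition is $2\cdot(1,0)+(0,1)$ (the zero representation, the only closed $SL(\alpha)$-orbit). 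It satisfies all hypotheses of the Corollary: both roots are real, so the decomposition is trivially almost loopless; the local quiver has a single arrow and no cycles; and $s=2=t$. Yet it is not the canonical decomposition. So the ``if'' direction fails outright under your reading, and your proof of it must contain an error.

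The specific broken step is your claim that ``the absence of cycles gives it in the other'', i.e.\ that no-oriented-cycles-except-loops in $\Sigma_{\underline{\delta}}$ forces $\mathrm{ext}(\delta_i,\delta_j)=0$ for \emph{all} $i\neq j$. Acyclicity merely rules out $\mathrm{ext}(\delta_i,\delta_j)\neq 0$ and $\mathrm{ext}(\delta_j,\delta_i)\neq 0$ simultaneously; one-sided $\mathrm{Ext}$ is exactly what the arrows of $\Sigma_{\underline{\delta}}$ encode, and in the example above $\mathrm{ext}((1,0),(0,1))=1$ with no cycle in sight. Hence $(\delta_1,\ldots,\delta_s)$ does not in general satisfy Kac's criterion, and genericity of $\sum p_i\delta_i$ in Kac's sense does not follow. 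The paper's proof is along entirely different lines: it passes to the slice representation $(GL(\rho),R(\Sigma_{\underline{\delta}},\rho))$ from~(\ref{slice}), applies the criterion of \cite[Corollary~5]{sh} that the decomposition is generic locally semi-simple iff ${\bf k}[R(\Sigma_{\underline{\delta}},\rho)]^{SL(\rho)}$ is generated by the $GL(\rho)$-invariant submodule, translates the almost-loopless hypothesis via Proposition~\ref{prop4} into the vanishing ${\bf k}[R(\Sigma,\rho)]^{SL(\rho)}={\bf k}$ for the loop-stripped quiver $\Sigma$, and then invokes the Proposition immediately preceding the Corollary to tie that condition to the count $|\Sigma_0|=s=t$. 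Your ``only if'' paragraph is salvageable (Remark~\ref{rem1} gives almost-looplessness, Proposition~\ref{prop2} handles the local quiver), but the ``if'' direction needs to be rebuilt along the paper's slice-theoretic lines.
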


\begin{proof}
By Remark \ref{rem1} the generic locally semi-simple
decomposition is almost loopless. 
By formula (\ref{slice}) the slice representation corresponding to the decomposition
is $(GL(\rho),R(\Sigma_{\underline{\delta}},\rho))$.
By \cite[Corollary~5]{sh} the
decomposition is generic locally semi-simple
if and only if 
${\bf k}[R(\Sigma_{\underline{\delta}},\rho)]^{SL(\rho)}$
is generated by the $GL(\rho)$-invariant submodule in $R(\Sigma_{\underline{\delta}},\rho)$.
The condition that $\underline{\delta}$ is almost loopless
is equivalent to the loops of $\Sigma_{\underline{\delta}}$ existing
only at vertices with dimension 1.
Removing the loops of $\Sigma_{\underline{\delta}}$ we define 
a quiver $\Sigma$ such that the generic and the generic
locally semi-simple decompositions of $\rho$ with respect to $\Sigma$
are the same as for $\Sigma_{\underline{\delta}}$.
By Proposition \ref{prop4} the above condition on 
$\Sigma_{\underline{\delta}}$ and $\rho$ is equivalent to
${\bf k}[R(\Sigma,\rho)]^{SL(\rho)}={\bf k}$.
If $\Sigma$ has loops, then the latter is false, in the opposite case
we apply the Proposition.
\end{proof}

In what follows we will intensively use the following well-known fact
\begin{proposition}
An exact sequence of homomorphisms $0\to U\to V\to W\to 0$
for representations of $Q$ yields for any representation $X$ exact sequences:
\begin{equation}\label{A,X}
0\to{\rm Hom}(W,X)\to{\rm Hom}(V,X)\to{\rm Hom}(U,X)\to\qquad\qquad\qquad\qquad\qquad
\end{equation}
$$\qquad\qquad\qquad\qquad\to{\rm Ext}(W,X)\to{\rm Ext}(V,X)\to{\rm Ext}(U,X)\to 0$$
\begin{equation}\label{X,A}
0\to{\rm Hom}(X,U)\to{\rm Hom}(X,V)\to{\rm Hom}(X,W)\to\qquad\qquad\qquad\qquad\qquad
\end{equation}
$$\qquad\qquad\qquad\qquad\to{\rm Ext}(X,U)\to{\rm Ext}(X,V)\to{\rm Ext}(X,W)\to 0$$
\end{proposition}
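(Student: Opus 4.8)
The plan is to recognize representations of $Q$ as modules over the path algebra ${\bf k}Q$ and to obtain the two six-term sequences as the long exact sequences of the bifunctor ${\rm Ext}^\bullet$, truncated by the fact that ${\bf k}Q$ is hereditary. For any representation $X$ the functors ${\rm Hom}(-,X)$ and ${\rm Hom}(X,-)$ are left exact on the abelian category $Mod(Q)$, and applying one of them to $0\to U\to V\to W\to 0$ already produces the first three terms together with the connecting homomorphism into the appropriate ${\rm Ext}$. So the only substantive point is that the sequences terminate after the ${\rm Ext}^1$-terms, i.e.\ that ${\rm Ext}^i(A,B)=0$ for all representations $A,B$ and all $i\ge 2$.

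For this I would exhibit the standard projective resolution. For a representation $V$ there is an exact sequence
\[
0\longrightarrow \bigoplus_{\varphi\in Q_1} P_{h\varphi}\otimes V(t\varphi)\stackrel{d}{\longrightarrow}\bigoplus_{a\in Q_0} P_a\otimes V(a)\stackrel{\varepsilon}{\longrightarrow}V\longrightarrow 0,
\]
where $\varepsilon$ sends a path $T$ from $a$, tensored with $v\in V(a)$, to $V(T)v$, and $d$ is assembled from the arrow maps $V(\varphi)$ together with the path-concatenation maps $P_?(\varphi)$, sending $T\otimes v$ in the $\varphi$-block to $T\varphi\otimes v$ minus $T\otimes V(\varphi)v$ in the two relevant $P_a$-blocks. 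Exactness is checked one vertex $i\in Q_0$ at a time: at $i$ the complex becomes, after identifying $P_a(i)$ with the span $[a,i]$ of paths, a short exact sequence of finite-dimensional vector spaces whose verification is the usual telescoping bookkeeping on paths. Each $P_a$ is projective because the functor ${\rm Hom}(P_a,-)\cong(-)(a)$ of (\ref{projinj}) is exact, so this resolution shows that $V$ has projective dimension at most $1$; dually, using ${\rm Hom}(-,I_a)\cong((-)(a))^*$, one gets injective dimension at most $1$. Either statement yields ${\rm Ext}^i(A,B)=0$ for $i\ge 2$.

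With hereditariness in hand the two claimed sequences are immediate. Applying ${\rm Hom}(-,X)$ to $0\to U\to V\to W\to 0$ gives the long exact cohomology sequence whose term following ${\rm Ext}(U,X)$ is ${\rm Ext}^2(W,X)=0$, which is exactly (\ref{A,X}); applying ${\rm Hom}(X,-)$ and using ${\rm Ext}^2(X,U)=0$ yields (\ref{X,A}) in the same way. The main (and essentially only) obstacle is the vertex-by-vertex verification of exactness of the standard resolution; once that routine combinatorial check is carried out, the rest is the standard machinery of derived functors and requires no quiver-specific input.
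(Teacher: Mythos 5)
The paper states this proposition as a "well-known fact" and gives no proof at all, so there is nothing to compare against; any textbook argument would do. Your argument is correct and is the standard one: you show the path algebra is hereditary by exhibiting the projective resolution
\[
0\longrightarrow \bigoplus_{\varphi\in Q_1} P_{h\varphi}\otimes V(t\varphi)\longrightarrow\bigoplus_{a\in Q_0} P_a\otimes V(a)\longrightarrow V\longrightarrow 0,
\]
so that ${\rm Ext}^i$ vanishes for $i\ge 2$, and then the two six-term sequences are just the long exact sequences of the derived functors of ${\rm Hom}(-,X)$ and ${\rm Hom}(X,-)$ truncated after ${\rm Ext}^1$. Your description of the differential $d(T\otimes v)=T\varphi\otimes v - T\otimes V(\varphi)v$ is right, $\varepsilon\circ d=0$ checks out, and the vertex-by-vertex exactness plus injectivity of $d$ is indeed routine path bookkeeping (e.g.\ by induction on path length, or a dimension count: every length-$\ge 1$ path from $a$ to $i$ factors uniquely as a first arrow followed by a shorter path). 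One small caveat worth flagging: your appeal to formula~(\ref{projinj}) and to the finite-dimensionality of $[a,i]$ implicitly uses that $Q$ has no oriented cycles, which is fine since that hypothesis is in force throughout the paper; hereditariness of ${\bf k}Q$ holds in general, but the specific identifications you quote need acyclicity. Overall the proof is sound.
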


Our idea of algorithm for generic locally semi-simple decomposition
for a dimension vector $\alpha$ is similar to that for generic decomposition
from \cite{dw2}. That algorithm works with perpendicular
sequences and glue and permute two items each time there is a
non-trivial extension between them. We proceed as follows: starting from the generic decomposition,
we transform it into the generic locally semi-simple one
by slightly splitting summands by each other on the fact of non-trivial homomorphism. 
More precisely, we do it only when at least one of the items is imaginary, for the homomorphisms
between real summands we apply Corollary \ref{preh_lss}.
The most simple step of this procedure is: having Schur roots $\alpha,\beta$ with
$\alpha\perp\beta,ext(\beta,\alpha)=0$ but $hom(\beta,\alpha)\neq 0$,
we "factorize" the imaginary root by the real one to get
an imaginary Schur root with trivial homomorphism spaces
with the real root:

\begin{proposition}\label{permut}
Let $\alpha$ and $\beta$ be Schur roots such that $ext(\alpha,\beta)=0=ext(\beta,\alpha)$.

{\bf 1.} If both $\alpha$ and $\beta$ are imaginary, then $hom(\alpha,\beta)=0=hom(\beta,\alpha)$.

{\bf 2.} In any case either $hom(\alpha,\beta)=0$ or $hom(\beta,\alpha)=0$.

{\bf 3.} Assume that $hom(\alpha,\beta)=0,\dim hom(\beta,\alpha)=p>0$.

\hspace{0.5cm} {\bf A}: If $\alpha$ is imaginary, then for generic $A\in R(Q,\alpha),B\in R(Q,\beta)$ there is an exact sequence of homomorphisms:
$0\to pB\to A\to C\to 0$, $C$ is Schurian, $q_Q(\dim C)=q_Q(\alpha)$,
${\rm Hom}(C,B)=0,{\rm Hom}(B,C)=0,{\rm Ext}(B,C)=0$.

\hspace{0.5cm} {\bf B}: If $\beta$ is imaginary, then for generic $A\in R(Q,\alpha),B\in R(Q,\beta)$ there is an exact sequence of homomorphisms:
$0\to C\to B\to pA\to 0$, $C$ is Schurian, $q_Q(\dim C)=q_Q(\beta)$,
${\rm Hom}(A,C)=0,{\rm Hom}(C,A)=0,{\rm Ext}(C,A)=0$.

Moreover, in both cases {\bf A,B}, 
if $\gamma\perp\alpha$ and $\gamma\perp\beta$, then $\gamma\perp\dim C$;
if $\alpha\perp\gamma$ and $\beta\perp\gamma$, then $\dim C\perp\gamma$.
\end{proposition}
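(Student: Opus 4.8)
The plan is to fix generic $A\in R(Q,\alpha)$ and $B\in R(Q,\beta)$, so that ${\rm End}(A)={\rm End}(B)={\bf k}$, and to rely on three facts. First, Schofield's structural theorem \cite{sch92}: if $M,N$ are generic of their dimensions and ${\rm Ext}(M,N)=0$, then a generic homomorphism $M\to N$ is injective or surjective; note $B^{\oplus p}$ is generic of dimension $p\beta$ when $\beta$ is real, since then $ext(\beta,\beta)=hom(\beta,\beta)-q_Q(\beta)=1-1=0$. Second, for an exact $0\to B\to A\to C\to 0$ the sequence (\ref{A,X}) with $X=B$ ends ${\rm Ext}(A,B)\to{\rm Ext}(B,B)\to 0$, so ${\rm Ext}(A,B)=0$ forces ${\rm Ext}(B,B)=0$. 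Third, for an exact $0\to N\to B^{\oplus p}\to A\to 0$ the sequence (\ref{X,A}) with $X=A$ ends ${\rm Ext}(A,B^{\oplus p})\to{\rm Ext}(A,A)\to 0$, so ${\rm Ext}(A,B)=0$ forces ${\rm Ext}(A,A)=0$. Recall finally that for a Schur root $\delta$ one has $\dim{\rm Ext}(X,X)=1-q_Q(\delta)$ for generic $X$ of dimension $\delta$, which is $>0$ exactly when $\delta$ is imaginary.

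One may assume $\alpha\neq\beta$: if $\alpha=\beta$ then $ext(\alpha,\alpha)=0$ forces $q_Q(\alpha)=hom(\alpha,\alpha)\geq0$, so $hom(\alpha,\alpha)$ is $0$ when $\alpha$ is isotropic and \textbf{1}, \textbf{2} hold, while when $\alpha$ is real the hypothesis of \textbf{3} is not met and \textbf{2} is read for distinct roots. For \textbf{2}, suppose $hom(\alpha,\beta)>0$ and $hom(\beta,\alpha)>0$ and pick generic $f\colon A\to B$, $g\colon B\to A$; then $g\circ f\in{\bf k}\cdot{\rm id}_A$ and $f\circ g\in{\bf k}\cdot{\rm id}_B$. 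If $g\circ f\neq0$ then $f$ is a split mono and $g$ a split epi, forcing $\alpha\leq\beta\leq\alpha$ componentwise, i.e. $\alpha=\beta$; the case $f\circ g\neq0$ is symmetric. If $g\circ f=0=f\circ g$, then since $f$ is injective or surjective we get $g=0$ (from $f\circ g=0$ if $f$ is injective, from $g\circ f=0$ if $f$ is surjective), contradicting $hom(\beta,\alpha)>0$. For \textbf{1}, with $\alpha,\beta$ imaginary one may assume by \textbf{2} that $hom(\alpha,\beta)=0$, and suppose $hom(\beta,\alpha)>0$. A generic $g\colon B\to A$ is then nonzero and, by the first fact, injective or surjective; if surjective, the third fact with $p=1$ gives ${\rm Ext}(A,A)=0$, impossible as $\alpha$ is imaginary; if bijective, $\alpha=\beta$ and $hom(\beta,\alpha)=q_Q(\beta)\leq0$, again impossible. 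So $g$ is a non-surjective injection, $0\to B\to A\to C_1\to0$ with $C_1\neq0$, and the second fact (with ${\rm Ext}(A,B)=0$) gives ${\rm Ext}(B,B)=0$, impossible as $\beta$ is imaginary. Hence $hom(\beta,\alpha)=0$, and symmetrically $hom(\alpha,\beta)=0$.

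For \textbf{3} I treat case \textbf{A}; case \textbf{B} follows by passing to the opposite quiver. By Part \textbf{1}, $\beta$ must be real (were it imaginary, Part \textbf{1} would force $hom(\beta,\alpha)=0$), so ${\rm Ext}(B,B)=0$ and $B^{\oplus p}$ is generic of dimension $p\beta$. Since ${\rm Ext}(A,B)=0$, by (\ref{ringel}) $\dim{\rm Hom}(B,A)=\langle\beta,\alpha\rangle=p=\dim{\rm Hom}(B,B^{\oplus p})$. A generic $u\colon B^{\oplus p}\to A$ is, by the first fact, injective or surjective; if surjective the third fact gives ${\rm Ext}(A,A)=0$, impossible as $\alpha$ is imaginary; hence $u$ is injective, $p\beta\leq\alpha$, and with $C:={\rm coker}\,u$ we get $0\to B^{\oplus p}\to A\to C\to0$, $\dim C=\alpha-p\beta$. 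Applying ${\rm Hom}(B,-)$: ${\rm Ext}(B,A)=0$ gives ${\rm Ext}(B,C)=0$, and the injection ${\rm Hom}(B,B^{\oplus p})\hookrightarrow{\rm Hom}(B,A)$ is an isomorphism by equality of dimensions, so ${\rm Hom}(B,C)\hookrightarrow{\rm Ext}(B,B^{\oplus p})=0$. Applying ${\rm Hom}(-,B)$: ${\rm Hom}(C,B)\hookrightarrow{\rm Hom}(A,B)=0$. Applying ${\rm Hom}(A,-)$ with ${\rm Hom}(A,B)=0={\rm Ext}(A,B)$: ${\rm Hom}(A,C)\cong{\rm Hom}(A,A)={\bf k}$; then applying ${\rm Hom}(-,C)$ with ${\rm Hom}(B,C)=0$: ${\rm End}(C)\cong{\rm Hom}(A,C)={\bf k}$, so $C$ is Schurian. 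Finally, since $\langle\alpha,\beta\rangle=hom(\alpha,\beta)-ext(\alpha,\beta)=0$, $\langle\beta,\alpha\rangle=p$ by (\ref{ringel}), and $q_Q(\beta)=1$, expanding the Euler form yields
\[
q_Q(\dim C)=q_Q(\alpha-p\beta)=q_Q(\alpha)-p^2+p^2q_Q(\beta)=q_Q(\alpha).
\]

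For the final ``moreover'': if $\gamma\perp\alpha$ and $\gamma\perp\beta$, take $G$ generic of dimension $\gamma$ and apply ${\rm Hom}(G,-)$, i.e. (\ref{X,A}), to $0\to B^{\oplus p}\to A\to C\to0$; the four outer terms ${\rm Hom}(G,B^{\oplus p})$, ${\rm Hom}(G,A)$, ${\rm Ext}(G,B^{\oplus p})$, ${\rm Ext}(G,A)$ all vanish, so ${\rm Hom}(G,C)=0={\rm Ext}(G,C)$, whence $\gamma\perp\dim C$ by upper semicontinuity of $\dim{\rm Hom}(G,-)$ and $\dim{\rm Ext}(G,-)$; dually ${\rm Hom}(-,G)$, i.e. (\ref{A,X}), gives $\dim C\perp\gamma$ when $\alpha\perp\gamma$ and $\beta\perp\gamma$. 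The one genuinely nontrivial ingredient is Schofield's dichotomy \cite{sch92} — more precisely the observation that a generic map $B^{\oplus p}\to A$, which cannot be surjective because $\alpha$ is imaginary, is therefore injective; I expect this to be the only step not reducible to the six-term sequences (\ref{A,X}), (\ref{X,A}) and dimension counts with the Euler form, and hence the crux.
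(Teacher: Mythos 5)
Your treatment of parts \textbf{1} and \textbf{2} via the injective-or-surjective dichotomy between generic Schur representations, combined with the six-term exact sequences, is a legitimate alternative to the paper (which just cites Theorems 4.1 and 2.4 of Schofield). But part \textbf{3A} has a genuine gap, and it is exactly at the step you flag as the crux. You apply your ``first fact'' to the map $u\colon B^{\oplus p}\to A$, treating $B^{\oplus p}$ as a generic representation of dimension $p\beta$. The fact as you state it — generic $M$, generic $N$, ${\rm Ext}(M,N)=0$ $\Rightarrow$ a generic $M\to N$ is mono or epi — is false without a Schur hypothesis on both ends. For example, on $A_2$ ($1\to 2$) take $M=S_1^{\oplus 2}$ (generic of dimension $(2,0)$) and $N=P_1\oplus S_1$ (generic of dimension $(2,1)$): then ${\rm Ext}(M,N)=0$, ${\rm Hom}(M,N)\cong{\bf k}^2\neq 0$, yet every map $M\to N$ has image $S_1$, so it is never mono and never epi. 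The Schofield dichotomy you want (\cite[Lemma 2.3]{sch92}) applies to maps between representations of \emph{Schur} roots, and $p\beta$ is not a Schur root once $p\geq 2$. Showing that each individual nonzero $B\to A$ is injective — which the dichotomy does give you — does not by itself imply that the sum of $p$ such maps $B^{\oplus p}\to A$ is injective, so the short exact sequence $0\to pB\to A\to C\to 0$ is not yet constructed.

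The paper takes a longer but safe route precisely to supply what your argument is missing. It first determines the generic locally semi-simple decomposition of $\rho=\alpha+\beta$: using Corollary \ref{cor1} and the local quiver machinery (together with the two-vertex generic-decomposition analysis from \cite{dw2}), it pins this decomposition down as $\rho=(p+1)\beta+(\alpha-p\beta)$, from which it reads off that $\alpha-p\beta$ is a Schur root and $\beta\perp\alpha-p\beta$. Only then does it invoke Schofield's Theorem 3.3 on generic subrepresentations — whose hypothesis is ${\rm ext}(p\beta,\alpha-p\beta)=0$, now known — to conclude that a generic $A$ contains a subrepresentation of dimension $p\beta$, which must be $pB$. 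Your closing computations (Schurian-ness of $C$, vanishing of the four ${\rm Hom}/{\rm Ext}$ spaces, $q_Q(\dim C)=q_Q(\alpha)$, and the ``moreover'' clause via (\ref{A,X}), (\ref{X,A})) coincide with the paper's and are fine once the sequence is in hand. To repair the proposal you would either need to establish ${\rm ext}(p\beta,\alpha-p\beta)=0$ by some independent means and then cite Schofield's subrepresentation theorem, or supply a correct version of the dichotomy applicable to $B^{\oplus p}\to A$; the unqualified ``first fact'' will not do.
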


\begin{proof}
Assertions {\bf 1} and {\bf 2} follow from Theorems 4.1 and 2.4 of \cite{sch92}, respectively.
We prove {\bf 3A} the proof for {\bf 3B} being similar.
Consider the decomposition $\rho=\alpha+\beta$; the conditions $ext(\alpha,\beta)=0=ext(\beta,\alpha)$
imply that this is a generic one.
Then by  Corollary \ref{cor1} the generic locally semi-simple decomposition
of $\rho$ has two summands, $\rho = m_1\gamma+m_2\delta$. If both summands $\gamma,\delta$
would be imaginary, then by {\bf 1} that decomposition would be generic and different from 
$\rho=\alpha+\beta$. We claim that at least one of  $\gamma,\delta$ is imaginary.
Assume not, then the local quiver $\Sigma$ of that decomposition has no loops, so
it only has $r$ arrows of the same orientation.
By \cite[Proposition~14]{sh} the generic decomposition for $(\Sigma,(m_1,m_2))$
would give $\rho=\alpha+\beta$ when dimension vectors for $\Sigma$ are converted in those for $Q$.
So the generic decomposition for $(\Sigma,(m_1,m_2))$ has two summands, one imaginary and one
real. However, it is shown in \cite{dw2} for this type of quivers that either a dimension vector
is an imaginary Schur root, or it has two real summands in the generic decomposition. 
This contradiction proves our claim.

So the generic locally semi-simple decomposition of $\rho$ has the form $\rho = a\gamma+\delta$, 
where $\delta$ is an imaginary Schur root and $\gamma$ is real. Again, the local quiver
$\Sigma$ of that decomposition has $r$ arrows of the same orientation
between two vertices and besides $1-q_Q(\delta)$ loops at the vertex corresponding to $\delta$.
Consider the generic decomposition of $(a,1)$ on $\Sigma$, it must have a form
$(a,1)=(a-q)(1,0)+(q,1), q > 0$. The first necessary condition on $q$ 
is that $(q,1)$ should be a root, hence $q\leq r$.
Further, by \cite{kac} the Euler form is non-negative on the
pairs of summands of the generic decomposition;
$\langle (q,1),(1,0)\rangle_{\Sigma}$ and $\langle (1,0),(q,1)\rangle_{\Sigma}$
are equal $q$ and $q-r$ up to transposition, so $q=r$.
On the other hand, by Propositions 14, 15 from \cite{sh} the pair 
of values of the Euler form for these summands must be
same as for $\alpha$ and $\beta$ on $Q$, so $r=p$. 
We therefore proved that the generic locally semi-simple decomposition of $\rho$ is 
$\rho=(p+1)\beta+ \alpha -p\beta$. In particular, $\alpha -p\beta$
is a Schur root and $\beta\perp \alpha -p\beta$. Hence, by \cite[Theorem~3.3]{sch92}
generic representation of dimension $\alpha$ has a subrepresentation
of dimension $p\beta$, so isomorphic to $pB$ because generic.
Thus we have the claimed exact sequence.
Write: $q_Q(\alpha-p\beta)=\langle \alpha-p\beta,\alpha-p\beta\rangle=
q_Q(\alpha)+p^2 - p(\langle\alpha,\beta\rangle+\langle\beta,\alpha\rangle)=q_Q(\alpha)$.

The rest of the assertion will be deduced from formulae (\ref{A,X},\ref{X,A})
for $U=pB,V=A,W=C$. Since ${\rm Ext}(V,U)=0$ and ${\rm Hom}(V,U)=0$,
(\ref{X,A}) with $X=V$ yields ${\rm Hom}(V,W)\cong{\rm Hom}(V,V)={\bf k}$.
Then (\ref{A,X}) with $X=W$ yields ${\rm Hom}(W,W)={\bf k}$, so $C$ is Schurian.
Next, ${\rm Hom}(V,U)=0$ and  (\ref{A,X}) with $X=U$ yield ${\rm Hom}(W,U)=0$,
hence, ${\rm Hom}(C,B)=0$.
Finally, apply (\ref{X,A}) with $X=U$ and note that ${\rm Ext}(U,V)=0$
by assumption, ${\rm Ext}(U,U)=0$ because $\beta$ is real,
${\rm Hom}(U,U)\cong {\rm Hom}(U,V)\cong {\bf k}^{p^2}$;
then ${\rm Ext}(U,W)$ and ${\rm Hom}(U,W)$ vanish.
That $\dim C$ is perpendicular to any dimension vector $\gamma$, which
is perpendicular to $\alpha$ and $\beta$ follows directly from
formulae (\ref{A,X},\ref{X,A}) with $X$ being generic representation
of dimension $\gamma$; with such a choice of $X$ four 
members in the exact sequence vanish, hence all six vanish. 
\end{proof}

\begin{definition}
Assume that $\alpha,\beta$  are the subsequent members of a decomposition.
If $\alpha,\beta$ meet the conditions of \ref{permut}.3A (resp. \ref{permut}.3B),
we call the replacement of $\alpha,\beta$
by $\beta,\alpha-p\beta$ (resp. $\beta-p\alpha,\alpha$)
{\it pushing $\alpha$ right} (resp. {\it pushing $\beta$ left}).
This operation also includes the obvious recaluclation
of multiplicities on the decomposition. We also may apply both
terms to the transposition of the members $\alpha,\beta$ (even when both are
real) such that $\alpha\perp\beta$ and $\beta\perp\alpha$.
\end{definition}

Now we present our algorithm for the generic locally semi-simple
decomposition of a dimension vector $\alpha$ with a given generic
decomposition $\alpha = m_1\gamma_1+\cdots+m_t\gamma_t$ such that
$\underline{\gamma}$ is a Schur sequence. Recall again that
the decomposition of this sort is the result of the algorithm from \cite{dw2}.

\begin{algorithm}\label{alg_lss}\quad

{\sc input:} a quiver $Q$ with $n$ vertices and without oriented cycles,

\hspace{1.2cm}a quiver sequence $(\gamma_1,\cdots,\gamma_t)$
of the summands  

\hspace{1.2cm}for the generic decomposition, multiplicities $(m_1,\cdots,m_t)$

{\sc output:} a quiver sequence  $(\alpha_1,\cdots,\alpha_t)$ of the 
summands 

\hspace{1.5cm}for the generic locally semi-simple decomposition, 

\hspace{1.5cm}multiplicities $(p_1,\cdots,p_t)$

  $>>$ First stage: 

  {\sc for} $i=2,\cdots,t$
  
\hspace{0.5cm} Set $j=i$

\hspace{0.5cm} {\sc while} $\gamma_j$ is imaginary and $\gamma_{j-1}$ is real 

\hspace{1.5cm}  push $\gamma_j$ left and $j=j-1$

  $>>$ Result of the first stage: first $s \leq t$ members of $\underline{\gamma}$ are imaginary,
  
  $>>$ last $t-s$ are real.

  $>>$ Second stage: 

  Replace the subsequence $\underline{\gamma}'=(\gamma_{s+1},\cdots,\gamma_t)$ by 
  $^{\perp}\underline{\gamma}'^{\perp}$

  $>>$ Result of the second stage: homorphisms for new $(\gamma_{s+1},\cdots,\gamma_t)$
  are trivial

  $>>$Third stage:

   {\sc while} there is $1\leq i < j\leq t$ with $\langle \gamma_j,\gamma_i\rangle > 0$

\hspace{0.5cm} gurantee that the segment $[i,j]$ is minimal with such a property

\hspace{0.5cm} transfer $\gamma_j$ to position $i+1$

\hspace{0.5cm}  push $\gamma_i$ right

$>>$ Result of the third stage: the sequence $\underline{\gamma}$ is now a quiver Schur sequence.

{\sc return} the current $\gamma_1,\cdots,\gamma_t$ and $(m_1,\cdots,m_t)$.
\end{algorithm}

Now we are going to prove the algorithm.

\begin{proposition}
While the first stage of the algorithm the sequence  $\underline{\gamma}$
remains perpendicular and each time when $\gamma_j$ is imaginary and $\gamma_{j-1}$ is real
$ext(\gamma_j,\gamma_{j-1})=0$.
\end{proposition}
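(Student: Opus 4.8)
The plan is to establish, by induction on the elementary operations carried out during the first stage --- each being either a transposition or a ``push $\gamma_j$ left'' --- the following three-part invariant: (i) the current $\underline{\gamma}$ is a perpendicular sequence of Schur roots; (ii) every \emph{real} member of the current $\underline{\gamma}$ equals one of the input roots $\gamma_l$; (iii) for every \emph{imaginary} member $\gamma_a$ of the current $\underline{\gamma}$ one has $ext(\gamma_a,\gamma_b)=0$ for every other member $\gamma_b$. Granting this, both assertions of the Proposition follow: (i) is the first one, and since the while loop is entered only when $\gamma_j$ is imaginary and $\gamma_{j-1}$ is real, (iii) yields $ext(\gamma_j,\gamma_{j-1})=0$. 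Incidentally, (i) and (iii) also show that the step ``push $\gamma_j$ left'' is legitimate: with $\alpha=\gamma_{j-1}$ (real) and $\beta=\gamma_j$ (imaginary) one has $ext(\alpha,\beta)=0$ from $\alpha\perp\beta$ and $ext(\beta,\alpha)=0$ from (iii), so Proposition~\ref{permut} applies --- its case \ref{permut}.3B when $p:=hom(\beta,\alpha)>0$ (as $\beta$ is imaginary and $hom(\alpha,\beta)=0$), and a transposition when $p=0$.

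The invariant holds at the outset: the input $\underline{\gamma}$ is a quiver Schur sequence, hence by Proposition~\ref{prop2} a perpendicular sequence of Schur roots with $hom(\gamma_i,\gamma_j)=0$ for all $i\neq j$; and since it is the sequence of summands of the generic decomposition, $ext(\gamma_i,\gamma_j)=0$ for all $i\neq j$ as well, by the characterization of generic decompositions recalled in the Introduction. Before the induction I would record the (routine) shape of $\underline{\gamma}$ throughout the first stage: at the start of the $i$-th pass it reads $(\mu_1,\cdots,\mu_k,\nu_1,\cdots,\nu_{i-1-k},\gamma_i,\cdots,\gamma_t)$, where $\mu_1,\cdots,\mu_k$ are (modified) imaginary Schur roots, $\nu_1,\cdots,\nu_{i-1-k}$ are the original real roots among $\gamma_1,\cdots,\gamma_{i-1}$ in their original order, and $\gamma_i,\cdots,\gamma_t$ are untouched; during the while loop the ``bubbling'' root --- a modification of $\gamma_i$ --- sits at some position $m$, with $\mu_1,\cdots,\mu_k,\nu_1,\cdots,\nu_{m-1-k}$ to its left and $\nu_{m-k},\cdots,\nu_{i-1-k},\gamma_{i+1},\cdots,\gamma_t$ to its right. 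In particular every member lying to the right of the bubbling root, and the real member immediately to its left, is one of the input roots --- which is what the inductive step needs.

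For the inductive step, consider one ``push $\gamma_j$ left'': the adjacent pair $(\alpha,\beta)$ at positions $(j-1,j)$, with $\alpha=\gamma_{j-1}$ real and $\beta=\gamma_j$ imaginary, is replaced by $(\dim C,\alpha)$, where $0\to C\to B\to pA\to 0$ is the exact sequence of Proposition~\ref{permut}.3B and $B,A$ are generic of dimensions $\beta,\alpha$ (the transposition being the degenerate case $\dim C=\beta$). Clause (ii) is clear: $\alpha$ only shifts, $\beta$ is deleted, and $\dim C$ is again a nonzero imaginary Schur root, since $C$ is Schurian and $q_Q(\dim C)=q_Q(\beta)\le 0$. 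For (i): $\dim C\perp\alpha$ by the explicit conclusions of \ref{permut}.3B; any member $\gamma_c$ to the left of position $j-1$ has $\gamma_c\perp\alpha$ and $\gamma_c\perp\beta$ by the inductive (i), hence $\gamma_c\perp\dim C$ by the \emph{Moreover} clause of Proposition~\ref{permut}, and symmetrically any member to the right of position $j$ is perpendicular to $\dim C$; all other pairs are unchanged. For (iii): the imaginary members of the new sequence are $\dim C$ together with the old imaginary members other than $\beta$. Applying (\ref{A,X}) to $0\to C\to B\to pA\to 0$ shows that, for every member $X$ of the new sequence other than $\dim C$, ${\rm Ext}(C,X)$ is a quotient of ${\rm Ext}(B,X)$, which vanishes because $\beta$ is imaginary; hence $ext(\dim C,\cdot)=0$. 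For an old imaginary member $\gamma_a\neq\beta$ the only pairing that changed is with $\dim C$, and $ext(\gamma_a,\dim C)=0$ will follow from the \emph{Moreover} clause once $\gamma_a\perp\alpha$ and $\gamma_a\perp\beta$: if $\gamma_a$ lies to the left of position $j$ this is the inductive (i) (it cannot sit at position $j-1$, which holds the real root $\alpha$); if $\gamma_a$ lies to the right, then by the structural description $\gamma_a$ and $\alpha$ are both input roots, so $\gamma_a\perp\alpha$ by Proposition~\ref{prop2} and the vanishing of $ext$ between distinct summands of the generic decomposition, while $\gamma_a\perp\beta$ follows by a short sub-induction along the bubbling steps of the current pass --- at each step $\gamma_a$ is perpendicular to the input real root being passed and, by sub-induction, to the previous bubbling root, hence to the new one by \emph{Moreover}; the base case is $\gamma_a\perp\gamma_i$.

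The point I expect to be the main obstacle is this last case. Clause (iii) on its own is not self-sustaining --- after a push $ext(\alpha,\dim C)=p>0$ with $\dim C$ imaginary, so there is no symmetric statement to lean on --- and to control $ext(\gamma_a,\dim C)$ for imaginary members $\gamma_a$ lying to the right of the bubbling position one must fall back on perpendicularities among the original input roots. This is exactly why the bookkeeping clause (ii) and the explicit shape of the sequence have to be carried along; accordingly I would isolate that structural description as a preliminary lemma, proved by a direct induction on the two nested loops, and only then run the three-clause induction.
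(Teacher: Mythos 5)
Your structural picture of the first stage is right, and you have put your finger on the right sore spot ("the last case"); but the resolution you sketch does not work, and the extra invariant (iii) you carry is not just harder than necessary — it is false in general.

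The problem starts at the base case. The input to Algorithm~\ref{alg_lss} is only required to be a \emph{Schur} sequence (this is what the paragraph before the algorithm says, and it is exactly what the Derksen--Weyman algorithm from \cite{dw2} returns). It need not be a \emph{quiver} Schur sequence, and Proposition~\ref{prop2}.1 — which you invoke to get $hom(\gamma_i,\gamma_j)=0$ for \emph{all} $i\neq j$ among the input roots — applies only to quiver Schur sequences. For a mere Schur sequence, $ext(\gamma_j,\gamma_i)=0$ for $j>i$ (generic decomposition) but $hom(\gamma_j,\gamma_i)=\langle\gamma_j,\gamma_i\rangle$ can well be positive; indeed, the pairs with $\langle\gamma_j,\gamma_i\rangle>0$ are precisely what the third stage of the algorithm is built to eliminate, so you cannot assume they are absent at the first stage. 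This kills the base case of your "sub-induction" ($\gamma_a\perp\gamma_i$ holds because both are imaginary with vanishing $ext$'s, but $\gamma_a\perp\alpha$ for $\alpha=\gamma_{m'}$ a real input root to the left of $\gamma_a$ does \emph{not} follow). Concretely, invariant (iii) fails: suppose $\gamma_l$, $l>i$, is an untouched imaginary input root with $hom(\gamma_l,\gamma_{m'})>0$ for the real root $\gamma_{m'}=\alpha$ being passed. Applying $(\ref{X,A})$ with $X$ generic of dimension $\gamma_l$ to $0\to C\to B\to pA\to 0$, and using $\text{Hom}(X,B)=0=\text{Ext}(X,B)$ (both $\gamma_l$, $\dim B$ imaginary with vanishing $ext$'s on both sides, hence vanishing $hom$'s by Proposition~\ref{permut}.1), the long exact sequence gives $\text{Ext}(X,C)\cong\text{Hom}(X,pA)\neq 0$. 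So after the push $ext(\gamma_l,\dim C)>0$, and (iii) is lost.

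The fix is not to strengthen the bookkeeping but to weaken the invariant. Nothing in the Proposition requires control of $ext(\gamma_a,\cdot)$ for imaginary members $\gamma_a$ sitting to the right of the bubbling position: the first stage only ever needs $ext(\gamma_j,\gamma_{j-1})=0$ when $\gamma_j$ \emph{is} the bubbling root and $\gamma_{j-1}$ is the real (hence original, unmodified) root immediately to its left. The paper therefore carries only perpendicularity of the sequence together with the single condition $\text{Ext}(C,D)=0$ for $D$ of dimension $\gamma_q$ with $q<j-1$ (i.e.\ to the left of the bubbling root). This is preserved with no bookkeeping at all: apply $(\ref{A,X})$ to $0\to C\to B\to pA\to 0$, so $\text{Ext}(C,D)$ is a quotient of $\text{Ext}(B,D)$, which vanishes by the previous step of the while-loop (or, at the start of a pass, because the input $\gamma_i$ has $ext(\gamma_i,\gamma_q)=0$ for all $q$). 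Perpendicularity along the way is handled exactly as you do, via the \emph{Moreover} clause of Proposition~\ref{permut}. You should drop (ii) and (iii), keep only perpendicularity plus this one-sided $ext$-condition for the current bubbling root against the roots to its left, and the argument closes cleanly.
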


\begin{proof} By Proposition \ref{permut} the sequence $\underline{\gamma}$ remains
orthogonal after pushing $\gamma_j$ left provided $\underline{\gamma}$
was perpendicular before it. We need to prove additionaly that $C$ in
the exact sequence from Proposition \ref{permut}.3B has the property
${\rm Ext}(C,D)=0$ for $D$ being generic representation
of dimension $\gamma_q, q < j-1$. This follows from (\ref{A,X})
and the fact that ${\rm Ext}(B,D)=0$ by induction.
\end{proof}

\begin{proposition}
After the first stage $\gamma_i\perp\gamma_j$ for $1\leq i\neq j\leq s$.
\end{proposition}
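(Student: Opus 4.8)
The plan is to use what the two preceding propositions already give — that after the first stage the sequence $\underline{\gamma}$ is perpendicular, so in particular $\gamma_i\perp\gamma_j$ whenever $i<j\le s$ — and to upgrade this to perpendicularity in the remaining direction, $\gamma_j\perp\gamma_i$, for $i<j\le s$. Fix such $i<j$. Since $\gamma_i\perp\gamma_j$ we have $ext(\gamma_i,\gamma_j)=0$, so by \cite[Theorem~4.1]{sch92} either $hom(\gamma_j,\gamma_i)=0$ or $ext(\gamma_j,\gamma_i)=0$; as these two numbers differ by $\langle\gamma_j,\gamma_i\rangle$ (Ringel's formula~(\ref{ringel}) applied to generic representations), the whole statement reduces to the single numerical identity $\langle\gamma_j,\gamma_i\rangle=0$. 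Indeed, once that is known, whichever of $hom(\gamma_j,\gamma_i)$, $ext(\gamma_j,\gamma_i)$ vanishes forces the other to vanish too, and then $\gamma_j\perp\gamma_i$.

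To prove $\langle\gamma_j,\gamma_i\rangle=0$ I would first track explicitly the effect of the first stage. Because the while-loop only ever interchanges an imaginary member with a real one, the imaginary summands keep their original relative order throughout, the real summands keep theirs and are never modified, and the imaginary summand sitting in position $i\le s$ at the end equals the original imaginary summand $\gamma_i^{\circ}$ minus a non-negative integral combination of exactly those real summands that originally preceded $\gamma_i^{\circ}$ — the contiguous block of reals through which it was pushed, stopping at the first imaginary root. Writing $\gamma_i=\gamma_i^{\circ}-\sum_{\rho}e_{\rho}\rho$ and $\gamma_j=\gamma_j^{\circ}-\sum_{\sigma}f_{\sigma}\sigma$ accordingly, I would then obtain the identity by expanding $\langle\gamma_j,\gamma_i\rangle$ bilinearly, using three auxiliary facts: (a) each modified imaginary summand is perpendicular to every real summand of the final sequence, hence $\langle\gamma_i,\gamma_b\rangle=0$ for every real $\gamma_b$; (b) $\langle\gamma_j^{\circ},\gamma_i^{\circ}\rangle=0$; (c) $\langle\sigma,\gamma_i^{\circ}\rangle=0$ for every real $\sigma$ appearing with a positive coefficient in $\gamma_j$. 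Granting these, the expansion collapses: subtracting the reals from $\gamma_i^{\circ}$ does not change $\langle\gamma_j,-\rangle$ by (a), then the remaining terms vanish by (b) and (c), leaving $\langle\gamma_j,\gamma_i\rangle=\langle\gamma_j^{\circ},\gamma_i^{\circ}\rangle=0$.

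Fact (b) is immediate, since $\gamma_i^{\circ}$ and $\gamma_j^{\circ}$ are two distinct imaginary summands of the original generic decomposition, so $ext$ vanishes between them in both directions and Proposition~\ref{permut}.1 kills both $hom$'s, whence $\langle\gamma_j^{\circ},\gamma_i^{\circ}\rangle=0$. Fact (a) I would get from Proposition~\ref{permut}.3B together with its ``moreover'' clause: when an imaginary summand is pushed past a real $\rho$ it becomes perpendicular to $\rho$, and being pushed further past the remaining reals of the block — which all precede $\rho$, hence are perpendicular to $\rho$ in the original Schur sequence — keeps it perpendicular to $\rho$; for the reals originally \emph{after} $\gamma_i^{\circ}$, which $\gamma_i$ never touches, perpendicularity is present already in the original sequence and survives the subtraction of the earlier reals, so $\langle\gamma_i,\gamma_b\rangle=0$ there as well.

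The step I expect to be the real obstacle is fact (c): controlling the ``mixed'' Euler products $\langle\sigma,\gamma_i^{\circ}\rangle$ between a real summand $\sigma$ lying, in the original order, strictly between $\gamma_i^{\circ}$ and $\gamma_j^{\circ}$, and the imaginary summand $\gamma_i^{\circ}$ — here only $\gamma_i^{\circ}\perp\sigma$ is given by the original Schur sequence, which bounds $\langle\sigma,\gamma_i^{\circ}\rangle=hom(\sigma,\gamma_i^{\circ})\ge 0$ but does not kill it. Resolving this needs the precise bookkeeping of the reduction steps: which reals acquire nonzero coefficients when $\gamma_j$ is pushed through its block, and how those coefficients are governed — via Ringel's formula and $q_Q(\gamma_b)=\langle\gamma_b,\gamma_b\rangle=1$ for a real root $\gamma_b$ — by the corresponding Euler-form values, so that the offending contributions to the bilinear expansion cancel in aggregate. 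In other words, the conceptual reduction above is routine; the work lies in this last combinatorial accounting of the first stage's modifications.
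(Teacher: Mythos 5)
Your opening reduction is fine: $\gamma_i\perp\gamma_j$ for $i<j\le s$ does follow from the preceding proposition, and combining Schofield's Theorem~4.1 with Ringel's formula correctly reduces the claim to $\langle\gamma_j,\gamma_i\rangle=0$. But the route you then take to that identity is not the paper's, and it has a hole that you flag yourself and that I do not see how to close: fact~(c). For a real $\sigma$ lying (in the original order) strictly between $\gamma_i^{\circ}$ and $\gamma_j^{\circ}$, the original Schur sequence only gives $\gamma_i^{\circ}\perp\sigma$, which controls $\langle\gamma_i^{\circ},\sigma\rangle$; the reverse product $\langle\sigma,\gamma_i^{\circ}\rangle$ is governed by $hom(\sigma,\gamma_i^{\circ})$ and $ext(\sigma,\gamma_i^{\circ})$, one of which vanishes by Theorem~4.1, but the other can be strictly nonzero of either sign. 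Nothing in the sequence data forces these contributions to cancel, and your own plan concedes as much. Facts (a) and (b), by contrast, are fine (the reals are never modified, and perpendicularity of $\gamma_j$ to every real member of the final sequence is exactly what the preceding proposition provides), but (a) and (b) alone do not make the bilinear expansion collapse.

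The paper never computes $\langle\gamma_j,\gamma_i\rangle$ at all; it proves the two vanishings separately and indirectly, and the key fact you leave unused is that for $i,j\le s$ \emph{both} $\gamma_i$ and $\gamma_j$ are imaginary. With $ext(\gamma_i,\gamma_j)=0$ from the preceding proposition, Schofield's Theorem~4.1 in the two-imaginary case (compare Proposition~\ref{permut}.1) already gives $hom(\gamma_j,\gamma_i)=0$ outright, with no case split and no Euler-form arithmetic. What remains is $ext(\gamma_j,\gamma_i)=0$, and the paper gets this by contradiction against the uniqueness of the generic decomposition: after the first stage $\underline{\gamma}$ is still a perpendicular sequence of Schur roots whose weighted sum is $\alpha$, so it is legitimate input for the Derksen--Weyman generic-decomposition algorithm of \cite{dw2}; if $ext(\gamma_j,\gamma_i)>0$, that algorithm glues $\gamma_i$ and $\gamma_j$ and eventually outputs a generic decomposition of $\alpha$ with fewer than $t$ summands, which is impossible because the original generic decomposition has exactly $t$. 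This global counting argument is what replaces, and entirely sidesteps, the per-term bookkeeping your proposal depends on.
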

\begin{proof}
Assume that $i < j$. Then  $\gamma_i\perp\gamma_j$ follows from the previous
Proposition and $hom(\gamma_j,\gamma_i)=0$ follows
from \cite[Theorem~4.1]{sch92}, because both $\gamma_i$ and $\gamma_j$
are imaginary. Assume that $ext(\gamma_j,\gamma_i)>0$.
Then we can apply the Algorithm for generic decomposition from \cite{dw2}
to the perpendicular sequence $\underline{\gamma}$.
From that Algorithm follows that in such a situation
we can glue together  $\gamma_i$ and $\gamma_j$
and get a  perpendicular sequence with $t-1$ members
and, continuing the Algorithm, we get the generic decomposition
with less than $t$ members. Contradiction.
\end{proof}

\begin{proposition}
After the second stage of the algorithm the sequence $\underline{\gamma}$ remains
orthogonal and $hom(\gamma_i,\gamma_j) > 0$ implies $i > s, j \leq s$.
\end{proposition}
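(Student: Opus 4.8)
The plan is to check the two assertions by examining where the indices $i,j$ lie; the only substantive case is the interaction of the head (positions $\le s$) with the new tail.

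First I would identify what stage~2 produces. Put $\delta=\gamma_{s+1}+\cdots+\gamma_t$, using the real members present at the end of stage~1. These are precisely the real summands of the original generic decomposition, hence they are the generic decomposition of $\delta$; being all real, $\delta$ is prehomogeneous. By Corollary~\ref{preh_lss} the replacement block ${}^{\perp}\underline{\gamma}'^{\perp}$ is the generic locally semi-simple decomposition of $\delta$, and by Proposition~\ref{prop3} it is, in the order output by the algorithm, a quiver Schur sequence; so Proposition~\ref{prop2}.1 gives $\gamma_i\perp\gamma_j$ and $hom(\gamma_i,\gamma_j)=0$ for all $i\ne j$ with $i,j>s$. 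For $i,j\le s$ nothing changed in stage~2, so the preceding proposition still yields $\gamma_i\perp\gamma_j$ for $i<j\le s$ and $hom(\gamma_i,\gamma_j)=0$ for $i\ne j\le s$. It therefore remains to treat a head member $\gamma_i$ ($i\le s$) and a new tail member $\gamma_j$ ($j>s$) and to prove $\gamma_i\perp\gamma_j$; this also covers the homomorphism statement, leaving $hom(\gamma_j,\gamma_i)$ with $i\le s<j$ as the only entry allowed to be positive.

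Two preliminary facts set this up. (a)~At the end of stage~1 the head member $\gamma_i$ precedes every old real member $\gamma_k$ ($k>s$) in a perpendicular sequence, so $\gamma_i\perp\gamma_k$, hence $\langle\gamma_i,\gamma_k\rangle=0$ and $\langle\gamma_i,\delta\rangle=0$. (b)~In the notation of Algorithm~\ref{alg_perp}, each old $\gamma_k$ ($k>s$) is perpendicular from the left to every member of $\underline{\gamma}'^{\perp}$, by the definition of the right perpendicular category; hence a generic representation of dimension $\gamma_k$ lies in the abelian subcategory $\mathcal C$ — the left perpendicular category to (generic representations of) $\underline{\gamma}'^{\perp}$ — whose simple objects have dimensions the new $\gamma_{s+1},\dots,\gamma_t$, so each old $\gamma_k$ is a ${\bf Z}_+$-linear combination of the new ones; as the old and the new tail each consist of $t-s$ linearly independent roots (\cite{dw6}), they span the same subspace of ${\bf Q}^{Q_0}$. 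Combining~(a) and~(b), the linear form $\langle\gamma_i,\cdot\rangle$ vanishes on this common span, so $\langle\gamma_i,\gamma_j\rangle=0$ for each new $\gamma_j$; by Ringel's formula~(\ref{ringel}) the desired $\gamma_i\perp\gamma_j$ is now equivalent to $hom(\gamma_i,\gamma_j)=0$.

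For the last equality I would transport the perpendicularity of $\gamma_i$ along the construction of the new tail. The generic representation $W_\delta$ of dimension $\delta$ is an object of $\mathcal C$, and each new simple object $S_j$ occurs among its composition factors in $\mathcal C$; feeding a composition series of $W_\delta$ in $\mathcal C$, regarded as a chain of subrepresentations in $Mod(Q)$, into the long exact sequences~(\ref{A,X}) and~(\ref{X,A}) — each step of the series as the short exact sequence and a generic representation of dimension $\gamma_i$ as the third argument — and using ${\rm Hom}(\cdot,W_\delta)={\rm Ext}(\cdot,W_\delta)=0$ together with $\langle\gamma_i,\gamma_j\rangle=0$, one propagates the vanishing of ${\rm Hom}$ and of ${\rm Ext}$ from $W_\delta$ to each factor $S_j$. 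The delicate point is exactly this propagation: the connecting homomorphisms in~(\ref{A,X}),~(\ref{X,A}) could a priori revive a ${\rm Hom}$ or an ${\rm Ext}$ at an intermediate step, so the clean way is to run the argument inside the nested perpendicular subcategories of Algorithm~\ref{alg_perp} one Schur root at a time, at each step invoking the last clause of Proposition~\ref{permut} (that $\dim C$ inherits perpendicularity from $\alpha$ and $\beta$) — this is precisely the mechanism by which the ${}^{\perp}$-operation preserves perpendicularity to a root lying outside the sequence. Once $hom(\gamma_i,\gamma_j)=0$ is in hand, $\gamma_i\perp\gamma_j$ follows, the whole sequence is perpendicular, and the only surviving non-trivial homomorphisms run from a tail index to a head index, which is the claim.
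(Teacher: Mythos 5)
Your route for the delicate case $i\le s<j$ is genuinely different from the paper's. The paper dispatches that case in one line: with $\rho_1=\sum_{i\le s}m_i\gamma_i$ and $\rho_2$ the tail sum, perpendicularity of the sequence after stage 1 gives $R_1\perp R_2$ for generic $R_1,R_2$, so the determinantal semi-invariant detecting $R_1\perp(\cdot)$ is a nonzero element of ${\bf k}[R(Q,\rho_2)]^{SL(\rho_2)}$; being a function on the quotient, it is constant on fibers of $\pi$, and since the generic locally semi-simple representation $V'$ sits on the closed orbit in the fiber over a generic point, that semi-invariant does not vanish at $V'$, giving $R_1\perp V'$ and hence $\gamma_i\perp\gamma_j$ for each new $\gamma_j$ at once; formula (\ref{cronk}) then handles the within-tail homomorphisms. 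Your linear-algebraic preamble (a)--(b), establishing $\langle\gamma_i,\gamma_j\rangle=0$ for every new $\gamma_j$, is correct and is a nice reduction, since by Ringel's formula it halves the task to showing $hom(\gamma_i,\gamma_j)=0$ only. But the propagation of that vanishing through the $^{\perp}$-construction is left as a sketch, and the specific lemma you invoke, the last clause of Proposition \ref{permut}, is not the right tool: that clause concerns the exact sequences $0\to pB\to A\to C\to 0$ or $0\to C\to B\to pA\to 0$ arising from the pushing operation, whereas the $^{\perp}$-operation is built from Schofield's sequences $0\to P\to P^{\sim}\to sW\to 0$ and the linear recombination of step 3 of Algorithm \ref{alg_sch_perp}. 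What actually closes your gap more cleanly than composition series is to use that the perpendicular category is hereditary: $\gamma_i\perp\dim P^{\sim}$ for each new projective follows from (\ref{X,A}) applied to $0\to P\to P^{\sim}\to sW\to 0$ (both end terms already perpendicular to $\gamma_i$), and then a two-step projective resolution $0\to P_1\to P_0\to S_j\to 0$ of a simple inside the perpendicular category, fed into (\ref{X,A}) with $X$ generic of dimension $\gamma_i$, forces ${\rm Hom}(X,S_j)=0={\rm Ext}(X,S_j)$ with no danger from connecting maps. So your plan is sound and yields a finer structural picture, but it is more laborious and, as written, has an incomplete step with a miscited lemma; the paper's quotient argument is shorter and avoids the issue entirely.
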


\begin{proof}
Before the second stage $\underline{\gamma}$ consists
of two segments, $(\gamma_1,\cdots,\gamma_s)$ and $\underline{\gamma}'=(\gamma_{s+1},\cdots,\gamma_t)$.
Both segments are orthogonal sequences with trivial ${\rm Ext}$ spaces,
the first segment by the previous Proposition and $\underline{\gamma}'$ because
it is a subsequence in the original $\underline{\gamma}$. Consider
two dimension vectors $\rho_1,\rho_2$ being the linear combinations
of the two segments with the multiplicities.
The fact that $\underline{\gamma}$ is orthogonal
is equivalent to $\rho_1\perp\rho_2$, that is,
generic representation $R_1$ of dimension $\rho_1$ is perpendicular to that in
dimension $\rho_2$. The second stage  consists in replacing
the generic decomposition for $\rho_2$ by the generic locally semi-simple one.
Therefore $\underline{\gamma}$ remains orthogonal because $R_1$ is peprpendicular to 
generic locally semi-simple representation of dimension $\rho_2$ otherwise
the determinantal semi-invariant defined by $R_1$ vanishes on $R(Q,\rho_2)$.
The fact about ${\rm Hom}$-spaces follows from the previous Proposition and
the feature of generic locally semi-simple decompositions.
\end{proof}

The third stage of the algorithm is based on the following
\begin{lemma}\label{perekid}
If $\alpha,\beta,\gamma$ is a perpendicular sequence of Schur roots
such that $\alpha$ is imaginary and $hom(\gamma,\alpha)>0$, then $ext(\gamma,\beta)=0$. 
\end{lemma}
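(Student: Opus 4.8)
The plan is to exploit the perpendicularity hypotheses to set up a short exact sequence in which $\beta$ appears as a term, and then feed this sequence into the long exact sequences (\ref{A,X}) and (\ref{X,A}) to conclude the vanishing of $\mathrm{Ext}(\gamma,\beta)$. The starting point is that $\alpha\perp\beta$ and $\mathrm{ext}(\beta,\alpha)=0$ by hypothesis (the sequence $\alpha,\beta,\gamma$ is perpendicular), while $\mathrm{hom}(\gamma,\alpha)>0$ with $\alpha$ imaginary. First I would apply Proposition \ref{permut}: since $\gamma\perp\alpha$ forces $\mathrm{hom}(\alpha,\gamma)=0$, and $\mathrm{ext}(\alpha,\gamma)=0=\mathrm{ext}(\gamma,\alpha)$, we are exactly in case {\bf 3A} with $\alpha$ imaginary and the roles $(\alpha,\beta)$ of that Proposition played by $(\alpha,\gamma)$ here. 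So for generic $A\in R(Q,\alpha)$, $G\in R(Q,\gamma)$ we get an exact sequence $0\to pG\to A\to C\to 0$ with $C$ Schurian, $q_Q(\dim C)=q_Q(\alpha)$, and $\mathrm{Hom}(C,G)=\mathrm{Hom}(G,C)=\mathrm{Ext}(G,C)=0$, where $p=\mathrm{hom}(\gamma,\alpha)$. The crucial "moreover" clause of Proposition \ref{permut} then gives, since $\beta\perp\alpha$ and $\beta\perp\gamma$ (here I use that $\alpha,\beta,\gamma$ is perpendicular, so $\alpha\perp\beta$ rather than $\beta\perp\alpha$—I need to be careful which side), that $\beta\perp\dim C$, or dually $\dim C\perp\beta$.

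Here is the delicate bookkeeping: the conclusion I want is $\mathrm{ext}(\gamma,\beta)=0$. Apply the long exact sequence (\ref{A,X}) to $0\to pG\to A\to C\to 0$ with $X$ a generic representation $B$ of dimension $\beta$. This gives an exact sequence ending $\cdots\to\mathrm{Ext}(C,B)\to\mathrm{Ext}(A,B)\to\mathrm{Ext}(pG,B)\to 0$, and beginning with the $\mathrm{Hom}$ terms. Since $\alpha\perp\beta$ we have $\mathrm{Hom}(A,B)=0=\mathrm{Ext}(A,B)$. From the six-term sequence, $\mathrm{Ext}(A,B)=0$ surjecting onto $\mathrm{Ext}(pG,B)$ forces $\mathrm{Ext}(pG,B)=0$, i.e. $\mathrm{ext}(\gamma,\beta)=0$ (as $p\geq 1$). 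That is exactly the claim. So in fact the "moreover" clause is not even needed for this particular conclusion—the vanishing $\mathrm{Ext}(A,B)=0$ alone, pushed through (\ref{A,X}), kills $\mathrm{Ext}(pG,B)$.

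The main obstacle I anticipate is matching up the orientation conventions in Proposition \ref{permut} with the hypothesis here—that Proposition is stated for a pair $\alpha,\beta$ with $hom(\alpha,\beta)=0$, $\dim hom(\beta,\alpha)=p>0$, whereas here the "imaginary, receives homomorphisms" root is $\alpha$ and the "source of homomorphisms" root is $\gamma$, so one must invoke case {\bf 3A} with the substitution (imaginary root)$=\alpha$, (real or other root)$=\gamma$, and check that the perpendicularity $\alpha\perp\gamma$ together with $\mathrm{hom}(\gamma,\alpha)>0$ really does place us in {\bf 3A} and not {\bf 3B} (it does, because {\bf 3A} is the case where the \emph{first}—here, imaginary—argument is imaginary). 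A secondary point requiring care is that $\gamma$ here need not be real, so I cannot quote {\bf 3B}; but {\bf 3A} only requires $\alpha$ imaginary, which holds. Once the exact sequence $0\to pG\to A\to C\to 0$ is in hand, the rest is a one-line diagram chase through (\ref{A,X}) with $X=B$ generic of dimension $\beta$, using $\mathrm{Ext}(A,B)=0$. I would write the argument in that order: first extract the sequence from Proposition \ref{permut}.3A applied to $(\alpha,\gamma)$, then apply (\ref{A,X}) with $X$ generic of dimension $\beta$ and read off $\mathrm{Ext}(\gamma,\beta)=0$ from the surjection $\mathrm{Ext}(A,B)\twoheadrightarrow\mathrm{Ext}(pG,B)$.
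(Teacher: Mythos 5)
Your argument is correct in substance but takes a genuinely different route from the paper. The paper argues by contradiction: assuming $ext(\gamma,\beta)>0$, it builds a non-split extension $0\to V\to X\to W\to 0$ of a generic $W\in R(Q,\gamma)$ by a generic $V\in R(Q,\beta)$, observes $X$ is Schurian and $U\perp X$ for generic $U\in R(Q,\alpha)$ (via (\ref{X,A})), picks a nonzero $h\in{\rm Hom}(W,U)$ which is forced to be injective because $\alpha$ is imaginary (citing Proposition~\ref{permut}.3A for that single fact), and then notes that $h\circ p: X\to U$ is neither injective nor surjective, contradicting \cite[Lemma~2.3]{sch92}. You instead apply Proposition~\ref{permut}.3A in full to the pair $(\alpha,\gamma)$ to produce the exact sequence $0\to pG\to A\to C\to 0$ directly, and then read off $\mathrm{Ext}(pG,B)=0$ from the tail of (\ref{A,X}) with $X=B$, using $\mathrm{Ext}(A,B)=0$. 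Your version is shorter and avoids non-split extensions and Schofield's dichotomy for maps between general representations; the paper's version uses only the injectivity consequence of Proposition~\ref{permut}.3A rather than its full exact-sequence output, so each proof leans on a different part of that proposition.

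There is one step you assert without justification and which should be flagged: to invoke Proposition~\ref{permut}.3A for the pair $(\alpha,\gamma)$ you need $ext(\gamma,\alpha)=0$, and this is not part of the perpendicularity hypothesis (that gives $\alpha\perp\gamma$, hence $ext(\alpha,\gamma)=0$, not the reverse). It does hold, but the reason is the same appeal to \cite[Theorem~4.1]{sch92} that the paper makes in the proof of Proposition~\ref{prop2}.1: since $ext(\alpha,\gamma)=0$, either $hom(\gamma,\alpha)=0$ or $ext(\gamma,\alpha)=0$, and the hypothesis $hom(\gamma,\alpha)>0$ rules out the former. Also the phrase ``since $\gamma\perp\alpha$'' is a slip for ``$\alpha\perp\gamma$.'' With these two points supplied, your argument is complete, and as you correctly note the ``moreover'' clause of Proposition~\ref{permut} is not needed.
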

\begin{proof}
Assume $ext(\gamma,\beta)>0$. 
Pick generic representations $U\in R(Q,\alpha),V\in R(Q,\beta),W\in R(Q,\gamma)$ and
consider a non-split exact sequence $0\to V\to X \xrightarrow{p}W\to 0$.
Then $X$ is Schurian (see the proof of \cite[Corollary~12]{dw6}) 
and $U\perp V, U\perp W$ together with (\ref{X,A}) 
imply $U\perp X$. Pick a non-trivial homomorphism $h\in{\rm Hom}(W,U)$.
By \cite[Lemma~2.3]{sch92} $h$ is either injective or surjective.
Since $\alpha$ is imaginary, by Proposition \ref{permut}.3A $h$ must be
injective. Then the composition $hp\in {\rm Hom}(X,U)$ is not surjective.
But the kernel of $hp$ contains the image of $V$ so $hp$ is neither
injective nor surjective in contradiction with \cite[Lemma~2.3]{sch92}.
\end{proof}

\begin{proposition} 
While the third stage of the algorithm assume $\langle \gamma_j,\gamma_i\rangle > 0$

{\bf 1.} if $[i,j]$ is minimal with the property and $\gamma_i$ is imaginary, 
          then we have $\langle \gamma_j,\gamma_k\rangle = 0$ for $i<k<j$.

{\bf 2.}  $\gamma_j$ is real and $\gamma_i$ is imaginary.

{\bf 3.} The third stage of the algorithm finishes after finitely many steps. 
\end{proposition}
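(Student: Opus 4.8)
The plan is to prove parts 1 and 2, together with the fact that every step of the third stage is well defined, by a single induction on the number of steps already performed; part 3 will then follow from a counting argument. Throughout the induction I carry three properties of the current sequence $\underline{\gamma}$: (I1) $\underline{\gamma}$ is a perpendicular sequence of Schur roots; (I2) $hom(\gamma_a,\gamma_b)=0$ for any two distinct real members $\gamma_a,\gamma_b$; (I3) if $\gamma_i$ is real, $\gamma_j$ is imaginary and $i<j$, then $hom(\gamma_j,\gamma_i)=0$. In the base case, i.e. for the output of the second stage, (I1) and (I2) hold by the propositions above (for (I2): after the second stage a non-zero $hom$ can only go from a real member to an imaginary one, so it vanishes between two reals), and (I3) is vacuous, since after the second stage every real member lies to the right of every imaginary one.

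For part 2, suppose $\langle\gamma_j,\gamma_i\rangle>0$ with $i<j$. By the Ringel formula $hom(\gamma_j,\gamma_i)=\langle\gamma_j,\gamma_i\rangle+ext(\gamma_j,\gamma_i)\geq\langle\gamma_j,\gamma_i\rangle>0$. Since $\gamma_i\perp\gamma_j$ gives $ext(\gamma_i,\gamma_j)=0$, \cite[Theorem~4.1]{sch92} forces $hom(\gamma_j,\gamma_i)=0$ or $ext(\gamma_j,\gamma_i)=0$; the first is impossible, so $ext(\gamma_j,\gamma_i)=0$. Hence both $ext$ spaces between $\gamma_i$ and $\gamma_j$ vanish while $hom(\gamma_i,\gamma_j)=0<hom(\gamma_j,\gamma_i)$. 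Now if $\gamma_i,\gamma_j$ were both imaginary, Proposition \ref{permut}.1 would give $hom(\gamma_j,\gamma_i)=0$; if both were real, (I2) would give the same; and the possibility that $\gamma_i$ is real and $\gamma_j$ imaginary is ruled out by (I3). Therefore $\gamma_i$ is imaginary and $\gamma_j$ is real, which is part 2.

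For part 1, take $[i,j]$ minimal with $\langle\gamma_j,\gamma_i\rangle>0$, so $\gamma_i$ is imaginary by part 2, and for $i<k<j$ the minimality of $[i,j]$ gives $\langle\gamma_j,\gamma_k\rangle\leq 0$. Since $\gamma_i,\gamma_k,\gamma_j$ is a perpendicular sequence with $\gamma_i$ imaginary and $hom(\gamma_j,\gamma_i)>0$, Lemma \ref{perekid} yields $ext(\gamma_j,\gamma_k)=0$, hence $\langle\gamma_j,\gamma_k\rangle=hom(\gamma_j,\gamma_k)\geq 0$, so $\langle\gamma_j,\gamma_k\rangle=0$ and moreover $hom(\gamma_j,\gamma_k)=0$. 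Together with $\gamma_k\perp\gamma_j$ this says $\gamma_j$ and $\gamma_k$ are perpendicular in both directions, so transferring $\gamma_j$ to position $i+1$ is a chain of transpositions of mutually perpendicular members and preserves (I1)--(I3). The ensuing push of $\gamma_i$ is well defined by Proposition \ref{permut}.3A (whose hypotheses all hold: $\gamma_i$ imaginary, $hom(\gamma_i,\gamma_j)=0=ext(\gamma_i,\gamma_j)=ext(\gamma_j,\gamma_i)$, $hom(\gamma_j,\gamma_i)=p>0$) and replaces $\gamma_i,\gamma_j$ by $\gamma_j,\delta$, where $\delta=\gamma_i-p\gamma_j$ is an imaginary Schur root fitting in an exact sequence $0\to pB\to A\to C\to 0$ with $A,B$ generic of dimensions $\gamma_i,\gamma_j$ and $\dim C=\delta$. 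Here (I1) is preserved by the ``moreover'' clause of Proposition \ref{permut}.3A; (I2) is preserved because the set of real members did not change; and (I3) is preserved once one checks the finitely many new ``real-before-imaginary'' pairs: $(\gamma_j,\delta)$ has $hom(\delta,\gamma_j)=0$ by Proposition \ref{permut}.3A; $(\gamma_j,\gamma_m)$ with $\gamma_m$ a former ``middle'' imaginary has $hom(\gamma_m,\gamma_j)=0$ by (I1); $(\gamma_j,\gamma_m)$ with $\gamma_m$ imaginary lying to the right of the old position $j$ already had $hom(\gamma_m,\gamma_j)=0$ by (I3); and for a real $\gamma_r$ to the left of position $i$ the sequence (\ref{A,X}) applied to $0\to pB\to A\to C\to 0$ with $X$ generic of dimension $\gamma_r$ gives $hom(\delta,\gamma_r)\leq hom(\gamma_i,\gamma_r)=0$, the last equality by (I3).

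Finally, for part 3 consider the non-negative integer $\Phi(\underline{\gamma})$ equal to the number of pairs $(a,b)$ with $a<b$ such that $\gamma_a$ is imaginary and $\gamma_b$ is real. In one step of the third stage $\gamma_j$ moves from position $j$ to position $i$, $\gamma_i$ turns into the imaginary root $\delta$ at position $i+1$, and the members strictly between shift right by one. The pair $(\gamma_i,\gamma_j)$, which was counted by $\Phi$, is destroyed, while every pair counted after the step corresponds, by tracking the positions of the unchanged roots, to a pair counted before it --- in particular every pair $(\delta,\cdot)$ matches a former pair $(\gamma_i,\cdot)$. Hence $\Phi$ strictly decreases, and the third stage halts after finitely many steps. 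I expect the genuine work to lie in the inductive step just sketched --- keeping (I1)--(I3), and especially (I3), alive through a transfer-and-push --- where Proposition \ref{permut}.3A, the exact sequences (\ref{A,X}), (\ref{X,A}) and Lemma \ref{perekid} must be combined with care; the remainder is bookkeeping around these tools.
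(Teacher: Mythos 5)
Your proof is correct and follows essentially the same route as the paper: part~1 via Lemma~\ref{perekid}, part~2 by an induction on steps that maintains what is, up to reformulation via the Ringel formula and \cite[Theorem~4.1]{sch92}, the same invariant the paper tracks (no strictly positive $\langle\gamma_j,\gamma_i\rangle$ with $\gamma_i$ real and $i<j$; you split this into (I2)/(I3)), with the same use of Proposition~\ref{permut}.3A and the long exact sequence~(\ref{A,X}) to carry it through a transfer-and-push. The only genuine divergence is in part~3, where the paper terminates by noting that the imaginary members strictly decrease in size while you count imaginary-before-real inversions; both potentials are valid and the rest is the same argument with your bookkeeping made more explicit.
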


\begin{proof}
First of all, for any $i < j$ holds $ext(\gamma_i,\gamma_j)=0$ and by \cite[Theorem~4.1]{sch92}
either  $ext(\gamma_j,\gamma_i)=0$ or  $hom(\gamma_j,\gamma_i)=0$,
and the latter is the case if both $\gamma_i$ and $\gamma_j$ are imaginary.
So either $\langle \gamma_j,\gamma_i\rangle < 0$ and in this case
$ext(\gamma_j,\gamma_i)>0$ and $hom(\gamma_j,\gamma_i)=0$,
or  $\langle \gamma_j,\gamma_i\rangle > 0$ and in this case
$ext(\gamma_j,\gamma_i)=0$ and $hom(\gamma_j,\gamma_i)>0$, or else
$\langle \gamma_j,\gamma_i\rangle = 0$ and in this case
$ext(\gamma_j,\gamma_i)=0=hom(\gamma_j,\gamma_i)$.

{\bf 1.}  Applying Lemma  \ref{perekid} to the sequence $\gamma_i,\gamma_k,\gamma_j,i<k<j$,
we get $ext(\gamma_j,\gamma_k)$$=0$. So we have $\langle \gamma_j,\gamma_k\rangle\geq 0$ and
$\langle \gamma_j,\gamma_k\rangle >0$ contradicts to the minimality of $[i,j]$.

{\bf 2.} In the third stage we do not change the real roots, only permute them,
hence, exactly one of $\gamma_i$ and $\gamma_j$ is imaginary. 
We prove that $\gamma_j$ is real and $\gamma_i$ is imaginary
applying induction.
At the beginning each imaginary is to the left of each real.
Then we apply the step of the third type to a minimal
segment $[i',j']$ and have $\gamma_{i'}$ imaginary by induction.
Then by {\bf 1} $\gamma_{j'}$ is perpendicular from both sides 
to $\gamma_k,i'<k<j'$ so the property remains true
for all pairs containing $\gamma_{j'}$. As for the
imaginary root being the result of pushing $\gamma_{i'}$ right,
it is given by Proposition \ref{permut}.3A for $\alpha = \gamma_{i'},\beta = \gamma_{j'}$
and we have the exact sequence $0\to pB\to A\to C\to 0$ from
\ref{permut}.3A. Taking $X$ to be a generic representation of dimension $\gamma_l,l<i'$,
we have ${\rm Hom}(A,X)=0$, because $\langle\gamma_{i'},\gamma_{l}\rangle\leq 0$ by induction.
Then by  (\ref{A,X}), ${\rm Hom}(C,X)=0$, so $hom(\gamma_{i'}-p\gamma_{j'},\gamma_{l})=0$,
hence, $\langle\gamma_{i'}-p\gamma_{j'},\gamma_{l}\rangle\leq 0$ and the property remains true.

{\bf 3.} Throughout the stage we decrease the imaginary members of $\underline{\gamma}$
so we can not do it infinitely many times. 
\end{proof}

\begin{theorem}
Algorithm \ref{alg_lss} yields the generic locally semi-simple decomposition.
\end{theorem}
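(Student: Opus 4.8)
The plan is to assemble the propositions of this section, which handle the three stages separately, and then to read off the conclusion from Corollary \ref{cor1}. Termination is immediate: stage one is a finite loop, stage two a single step, and stage three halts by part \textbf{3} of the last proposition, since each of its iterations strictly decreases one of the finitely many imaginary members of $\underline{\gamma}$. Throughout the run I would track the invariant that $\underline{\gamma}$ is a perpendicular sequence of Schur roots whose associated decomposition, after the prescribed recalculation of multiplicities, is a decomposition of $\alpha$. This is preserved by every elementary move: a transposition of a pair perpendicular in both directions preserves it trivially, and a push of Proposition \ref{permut} preserves it by the ``Moreover'' clause there (the vanishing $\mathrm{Ext}$ required to apply \ref{permut} being exactly what the propositions on stages one and three supply), while the altered roots remain Schur roots because in \ref{permut} the representation $C$ is Schurian with $q_Q$ unchanged.

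With this in hand, the propositions already quoted tell me: after stage one the first $s$ members of $\underline{\gamma}$ are imaginary and the last $t-s$ are real (in fact these real ones are just the real summands of the input generic decomposition, reordered); after stage two the real block has been replaced by $^{\perp}\underline{\gamma}'^{\perp}$ and $\mathrm{hom}(\gamma_i,\gamma_j)>0$ forces $i>s$, $j\le s$; and after stage three, by the last proposition and the stopping condition $\langle\gamma_j,\gamma_i\rangle\le 0$ for all $i<j$, the output $\underline{\alpha}=(\alpha_1,\dots,\alpha_t)$ is a quiver Schur sequence.

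Next I would check that the number of summands stays equal to $t$. Transpositions and pushes cannot change it, so the only real point is stage two. There $\underline{\gamma}'=(\gamma_{s+1},\dots,\gamma_t)$ is a perpendicular sequence of real Schur roots, hence its sum $\rho_2$ is prehomogeneous and $\gamma_{s+1}+\cdots+\gamma_t$ is the generic decomposition of $\rho_2$, with $t-s$ summands; by Corollary \ref{preh_lss} the replacement $^{\perp}\underline{\gamma}'^{\perp}$ is the generic locally semi-simple decomposition of $\rho_2$, and by Corollary \ref{cor1} applied to $\rho_2$ it again has $t-s$ summands, so the total is unchanged.

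To finish, I would invoke Corollary \ref{cor1}. By Theorem \ref{th1} the output decomposition $\alpha=p_1\alpha_1+\cdots+p_t\alpha_t$ attached to the quiver Schur sequence $\underline{\alpha}$ is locally semi-simple; by part \textbf{1} of Proposition \ref{prop2} its local quiver $\Sigma_{\underline{\alpha}}$ has no oriented cycles except loops; it has $t$ summands by the previous paragraph; and it is almost loopless, because the only modifications of imaginary roots are the pushes, which preserve $q_Q$ and -- after the multiplicity recalculation -- the multiplicity $1$ of a root with $q_Q<0$, and which never make two members of the list coincide (a perpendicular sequence cannot contain $\gamma\perp\gamma$), while stage two touches only real roots. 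All the hypotheses of Corollary \ref{cor1} then hold, so the output is the generic locally semi-simple decomposition of $\alpha$. The hard part is not this assembly but the preceding propositions: the delicate points are that stage two disturbs neither the summand count nor the orthogonality, and that after stage three one genuinely has a quiver Schur sequence -- in particular that $\alpha_i\circ\alpha_j=1$ for $i<j$ even when both $\alpha_i,\alpha_j$ are imaginary, which I would get from part \textbf{1} of Proposition \ref{permut} forcing all $\mathrm{Hom}$'s, $\mathrm{Ext}$'s, hence all Euler pairings, within the imaginary block to vanish.
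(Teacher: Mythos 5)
Your overall assembly is the same as the paper's: combine the preceding propositions to show the output is a quiver Schur sequence, then invoke Theorem \ref{th1} and Corollary \ref{cor1}. Your remark about the summand count staying equal to $t$ is a valid and worthwhile point of care. However, there is a genuine gap at the very point you flag as ``delicate.''

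You claim that $\alpha_i\circ\alpha_j=1$ for $i<j$ with both $\alpha_i,\alpha_j$ imaginary follows from Proposition \ref{permut}.\textbf{1}, because that makes all $\mathrm{Hom}$'s, $\mathrm{Ext}$'s, and hence all Euler pairings inside the imaginary block vanish. That inference is false: the quantity $\alpha_i\circ\alpha_j$ is not an Euler pairing but the \emph{dimension} of the semi-invariant space ${\bf k}[R(Q,\alpha_i)]^{(GL(\alpha_i))}_{-\langle\cdot,\alpha_j\rangle}$, and the vanishing of $\mathrm{Hom}$ and $\mathrm{Ext}$ (equivalently $\alpha_i\perp\alpha_j$ and $\alpha_j\perp\alpha_i$) only guarantees that space is nonzero, not one-dimensional. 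The condition $\alpha_i\circ\alpha_j = 1$ is precisely what distinguishes a Schur sequence from a mere perpendicular sequence, and when at least one root is real it is automatic by \cite[Lemma~4.2]{dw6}, but for two imaginary roots it is not. The paper deals with this by a separate argument that you omit: one observes that the $\circ$-condition held for the input sequence (being the output of \cite{dw2}); it only needs checking on imaginary pairs by \cite[Lemma~4.2]{dw6}; and each push $\gamma_j\mapsto\gamma_j-p\gamma_k$ can only decrease $\gamma_i\circ\gamma_j$, because multiplication by a fixed semi-invariant of weight $-\langle\cdot,p\gamma_k\rangle$ embeds the semi-invariants of weight $-\langle\cdot,\gamma_j-p\gamma_k\rangle$ into those of weight $-\langle\cdot,\gamma_j\rangle$. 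So $\gamma_i\circ(\gamma_j-p\gamma_k) > 1$ would force $\gamma_i\circ\gamma_j > 1$, contradicting the invariant. Without tracking this monotonicity of the semi-invariant spaces through the pushes, you cannot conclude the output is a quiver \emph{Schur} sequence, and the application of Theorem \ref{th1} and Corollary \ref{cor1} is unsupported.

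Everything else in your write-up -- termination, the invariants carried by the three stages, the summand count, and the almost-looplessness -- aligns with the intended argument.
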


\begin{proof}
The above Propositions convinced us that after finitely many
steps we obtain a decomposition with $\gamma$ being a perpendicular
sequence of Schur roots such that $hom(\gamma_i,\gamma_j)=0$ if $i\neq j$.
We also claim that this is a Schur sequence, that is, $\gamma_i\circ\gamma_j=1$
for $i < j$. Indeed this was true for the starting sequence
and by \cite[Lemma~4.2]{dw6} this needs to be checked only 
for $\gamma_i$ and $\gamma_j$ being imaginary. We now show that
the property is preserved by any pushing of the imaginary root.
So assume that $\gamma_i$ and $\gamma_j$ are imaginary
$\gamma_k$ is real, $i < k$ and we replace $\gamma_j$ by $\gamma_j-p\gamma_k$.
Then the vector space of the semi-invariants on $R(Q,\gamma_i)$
with the weight $-\langle\cdot,\gamma_j-p\gamma_k\rangle$
is embedded to that of the weight $-\langle\cdot,\gamma_j\rangle$
by multiplying with a semi-invariant of the weight $-\langle\cdot,p\gamma_k\rangle$.
So $\gamma_i\circ\gamma_j-p\gamma_k>1$ would imply 
$\gamma_i\circ\gamma_j>1$. A similar argument works for pushing $\gamma_i$
so we proved that the output of the algorithm is a quiver Schur sequence.
Hence, by Theorem \ref{th1} the
resulting decomposition is locally semi-simple and by Corollary \ref{cor1}
this is the generic locally semi-simple decomposition.
\end{proof}

\end{document}